\documentclass[10pt, a4paper]{amsart}

\usepackage{amssymb,amsfonts}
\usepackage[all,arc]{xy}
\usepackage{enumerate}
\usepackage{mathrsfs}
\usepackage{tikz-cd}
\usetikzlibrary{decorations.pathmorphing}
\usepackage{enumitem}
\usepackage{empheq}
\usepackage{kotex}
\usepackage{ifpdf}
\usepackage{mathtools}
\usepackage{pst-node, pst-plot, pstricks}
\usepackage{mathrsfs}
\usepackage{amsmath}
\usepackage{epsfig}
\usepackage{amscd}
\usepackage{graphicx, color}
\graphicspath{ {./images/} }

\def\E{\ifmmode{\mathbb E}\else{$\mathbb E$}\fi} 
\def\N{\ifmmode{\mathbb N}\else{$\mathbb N$}\fi} 
\def\R{\ifmmode{\mathbb R}\else{$\mathbb R$}\fi} 
\def\Q{\ifmmode{\mathbb Q}\else{$\mathbb Q$}\fi} 
\def\C{\ifmmode{\mathbb C}\else{$\mathbb C$}\fi} 
\def\H{\ifmmode{\mathbb H}\else{$\mathbb H$}\fi} 
\def\Z{\ifmmode{\mathbb Z}\else{$\mathbb Z$}\fi} 
\def\P{\ifmmode{\mathbb P}\else{$\mathbb P$}\fi} 
\def\T{\ifmmode{\mathbb T}\else{$\mathbb T$}\fi} 
\def\SS{\ifmmode{\mathbb S}\else{$\mathbb S$}\fi} 
\def\DD{\ifmmode{\mathbb D}\else{$\mathbb D$}\fi} 

\DeclareSymbolFont{yhlargesymbols}{OMX}{yhex}{m}{n}

\DeclareMathAccent{\widetriangle}{\mathord}{yhlargesymbols}{"E6}

\def\E{\ifmmode{\mathbb E}\else{$\mathbb E$}\fi} 
\def\N{\ifmmode{\mathbb N}\else{$\mathbb N$}\fi} 
\def\R{\ifmmode{\mathbb R}\else{$\mathbb R$}\fi} 
\def\Q{\ifmmode{\mathbb Q}\else{$\mathbb Q$}\fi} 
\def\C{\ifmmode{\mathbb C}\else{$\mathbb C$}\fi} 
\def\H{\ifmmode{\mathbb H}\else{$\mathbb H$}\fi} 
\def\Z{\ifmmode{\mathbb Z}\else{$\mathbb Z$}\fi} 
\def\P{\ifmmode{\mathbb P}\else{$\mathbb P$}\fi} 
\def\T{\ifmmode{\mathbb T}\else{$\mathbb T$}\fi} 
\def\SS{\ifmmode{\mathbb S}\else{$\mathbb S$}\fi} 
\def\DD{\ifmmode{\mathbb D}\else{$\mathbb D$}\fi} 

\newcommand{\ben}{\begin{enumerate}}
\newcommand{\een}{\end{enumerate}}
\newcommand{\be}{\begin{equation}}
\newcommand{\ee}{\end{equation}}
\newcommand{\bea}{\begin{eqnarray}}
\newcommand{\eea}{\end{eqnarray}}
\newcommand{\beastar}{\begin{eqnarray*}}
\newcommand{\eeastar}{\end{eqnarray*}}
\newcommand{\bc}{\begin{center}}
\newcommand{\ec}{\end{center}}

\theoremstyle{theorem}
\newtheorem{thm}{Theorem}[section]
\newtheorem{cor}[thm]{Corollary}
\newtheorem{lem}[thm]{Lemma}
\newtheorem{prop}[thm]{Proposition}
\newtheorem{'thm'}[thm]{'Theorem'}

\theoremstyle{definition}
\newtheorem{defn}[thm]{Definition}
\newtheorem{rem}[thm]{Remark}

\newtheorem{exam}[thm]{Example}

\newtheorem{proof-sketch}[thm]{Proof-Sketch}

\newtheorem{claim}[thm]{Claim}

\newtheorem{lem-defn}[thm]{Lemma-Definition}
\newtheorem{prop-defn}[thm]{Proposition-Definition}
\newtheorem{thm-defn}[thm]{Theorem-Definition}

\newtheorem*{thm*}{Theorem}

\numberwithin{equation}{section}

\def\R{{\mathbb R}}

\def\E{{\mathbb E}}
\def\Z{{\mathbb Z}}
\def\C{{\mathbb C}}
\def\R{{\mathbb R}}
\def\P{{\mathbb P}}

\def\N{{\mathbb N}}

\def\11{{\mathbb I}}

\def\sgn{{\text{\rm sgn}}}

\def\C{\mathbb{C}}
\def\Z{\mathbb{Z}}

\def\T{\mathbb{T}}

\def\Q{\mathbb{Q}}

\def\E{\ifmmode{\mathbb E}\else{$\mathbb E$}\fi} 
\def\N{\ifmmode{\mathbb N}\else{$\mathbb N$}\fi} 
\def\R{\ifmmode{\mathbb R}\else{$\mathbb R$}\fi} 
\def\Q{\ifmmode{\mathbb Q}\else{$\mathbb Q$}\fi} 
\def\C{\ifmmode{\mathbb C}\else{$\mathbb C$}\fi} 
\def\H{\ifmmode{\mathbb H}\else{$\mathbb H$}\fi} 
\def\Z{\ifmmode{\mathbb Z}\else{$\mathbb Z$}\fi} 
\def\P{\ifmmode{\mathbb P}\else{$\mathbb P$}\fi} 
\def\SS{\ifmmode{\mathbb S}\else{$\mathbb S$}\fi} 
\def\DD{\ifmmode{\mathbb D}\else{$\mathbb D$}\fi} 

\def\R{{\mathbb R}}

\def\E{{\mathbb E}}
\def\Z{{\mathbb Z}}
\def\C{{\mathbb C}}
\def\R{{\mathbb R}}

\def\N{{\mathbb N}}

\def\darr#1{\raise1.5ex\hbox{$\leftrightarrow$}
\mkern-16.5mu #1}

\def\roughly#1{\raise.3ex\hbox{$#1$\kern-.75em
\lower1ex\hbox{$\sim$}}}

\def\opname#1{\mathop{\kern0pt{\rm #1}}\nolimits}

\def\Incl{\operatorname{Incl}}
\def\Eval{\operatorname{Eval}}
\def\Err{\operatorname{Err}}

\begin{document}

\quad \vskip1.375truein

\bibliographystyle{plain}

\title{Homotopy models for $L_{\infty}[1]$-algebras in higher degrees}

\author{Taesu Kim}


\begin{abstract}
We propose a model of higher homotopy theory of $L_{\infty}[1]$-morphisms as a natural generalization of the $A_{\infty}$-homotopies defined by Fukaya-Oh-Ohta-Ono \cite{FOOO1}. Within this framework, we show that a filling condition holds for simplices whose vertices are assigned quasi-isomorphisms.
\end{abstract}

\keywords{$L_{\infty}[1]$-algebras, Homotopies, Higher homotopies, Quasi-isomorphism}
\subjclass[2020]{Primary 17B55; Secondary 55U35, 18M70}
\thanks{This work was carried out while the author was affiliated with the Department of Mathematics at Pohang University of Science and Technology (POSTECH) and was supported by the BK21 FOUR program funded by the Ministry of Education (MOE), Korea (No. 4120240414885)}

\maketitle

\date{}

\tableofcontents

\section{Introduction}

Fukaya-Oh-Ohta-Ono \cite{FOOO1} constructed a filtered $A_{\infty}$-algebra structure on the cohomology models of Lagrangian submanifolds in a symplectic manifold. They also defined a model of homotopies between $A_{\infty}$-algebra morphisms to address the geometric implications arising from families of almost complex structures on the symplectic manifold. Their construction begins with the following definition:

\begin{defn}\cite[Definition 4.2.1]{FOOO1}\label{ahgadsfjkl}
Let $C$ be a filtered $A_{\infty}$-algebra. We say a filtered $A_{\infty}$-algebra $\mathfrak{C}$ is a \textit{model} of $\Delta^1 \times C$ if there exist a filtered $A_{\infty}$-morphisms
\[
\Eval_j : \mathfrak{C} \rightarrow C, \ \ j = 0,1\\
\]
and
\[
\Incl : {C} \rightarrow \mathfrak{C},
\]
with the following properties:
\begin{enumerate}[label = (\roman*)]
\item $\left(\Eval_j\right)_{k \geq 2} \equiv 0, \ j =0,1, \ \Incl_{k \geq 2} \equiv 0.$
\item $\Eval_j, \ j =0,1$ and  $\Incl$ are quasi-isomorphisms.
\item $(\Eval_j)_1 \circ \Incl_1 = \mathrm{id}_C.$
\item $(\Eval_0)_1 \oplus (\Eval_{1})_1 : \mathfrak{C} \rightarrow {C} \oplus C$ is surjective.
\end{enumerate}
\end{defn}
In the original definition \cite[Definition 4.2.1]{FOOO1}, condition (ii) requires that $(\Eval_j)_1$ and $\Incl_1$​ be chain homotopy equivalences. In our setting, however, we always work over a field, where chain homotopy equivalence coincides with quasi-isomorphism.

From Definition \ref{ahgadsfjkl}, homotopies between two $A_{\infty}$-algebras are defined as follows.

\begin{defn}\cite[Definition 4.2.35]{FOOO1}
We say that two filtered $A_{\infty}$-morphisms $f_0, f_1 : (C, \{l_k\}) \rightarrow (C', \{l'_k\})$ are \textit{homotopic} if there exist a \textit{model of} $\Delta^1 \times C'$ denoted by $\mathfrak{C}'$ and a filtered $A_{\infty}$-morphism $h: C \rightarrow \mathfrak{C}'$ such that we have $f_j = \Eval_j \circ h, \ j = 0,1.$
\end{defn}

We remark that \cite[Subsection 3.7]{Keller} provides another definition of homotopy between $A_{\infty}$-morphisms.

In fact, their constructions apply to other types of strong homotopy algebras as well. In this paper, we develop an $L_{\infty}[1]$-algebra version of their theory and propose a model for its generalization to higher degrees, naturally extending the definitions in \cite{FOOO1}.

We make the following definition for the $n \geq 2$ cases (see Definition \ref{hhtp}):

\begin{defn}[Models of $\Delta^n \times C$]
Let $C$ be an $L_{\infty}[1]$-algebra. Suppose that we have defined models of $\Delta^k \times C$ with $k \leq n-1.$ We recursively define \textit{models of} $\Delta^n \times C$ with $n \geq 2$ to be a collection of $L_{\infty}[1]$-algebras
\begin{equation}\nonumber
\mathfrak{C}^{(n)}, \  \left(\mathfrak{C}^{(n)}\right)_{J} \equiv \mathfrak{C}_{J}^{(n-1)}, \text{ where } J \text{ is a subset of } \{0, \cdots, n\} \text{ such that } |J| = n,
\end{equation}
together with an $L_{\infty}[1]$-morphisms
\[
\Eval_J^{(n)} :  \mathfrak{C}^{(n)} \rightarrow \mathfrak{C}_{J}^{(n-1)},
\]
and
\[
\Incl^{(n)} : {C} \rightarrow \mathfrak{C}^{(n)}\\
\]
with the following properties:
\begin{enumerate}[label = (\roman*)]
\item $\mathfrak{C}_{J}^{(n-1)}$ is a model of $\Delta^{n-1} \times C$ with $\mathfrak{C}_{\{i\}}^{(0)} = C$ for each $i.$
\item $\left(\mathfrak{C}_{J}^{(n-1)} \right)_{J'} = \left(\mathfrak{C}_{J'}^{(n-1)} \right)_{J} = \mathfrak{C}_{J \cap J'}^{(n-2)}$ for all $J, J' \subset \{0, \cdots, n\}$ with $|J| = |J'| = n$ and $|J \cap J'| = n-1.$ 
\item $\left(\Eval_j\right)_{k \geq 2} \equiv 0, \ j =0,1, \ \Incl_{k \geq 2} \equiv 0.$
\item $\Eval_J^{(n)}$ and $\Incl^{(n)}$ are quasi-isomorphisms.
\item $\Eval_J^{(n)} \circ \Incl^{(n)} = \Incl_J^{(n-1)},$
where $ \Incl_J^{(n-1)}$ is the Incl map for $\mathfrak{C}_{J}^{(n-1)},$ the model of $\Delta^{n-1} \times C$ for the index $J.$
\item The following sequence of chain complexes
\[
\begin{split}
\mathfrak{C}^{(n)} \xrightarrow{\partial_n}  \bigoplus_{\substack{J \subset \{0, \cdots, n\}, \\ |J| = n}} &\mathfrak{C}_J^{(n-1)} \xrightarrow{\partial_{n-1}} \bigoplus_{\substack{J' \subset \{0, \cdots, n\}, \\ |J'|=n-1}} \mathfrak{C}_{J'}^{(n-2)} \xrightarrow{\partial_{n-2}} \\
&\cdots \xrightarrow{\partial_{2}} \bigoplus_{\substack{J'' \subset \{0, \cdots, n\}, \\ |J''|=2}} \mathfrak{C}_{J''}^{(1)} \xrightarrow{\partial_{1}} \bigoplus_{i \in \{0, \cdots, n\}}C \rightarrow 0
\end{split}
\]
is in fact a chain complex that is exact at the first term.
Here, the differentials $\partial_n$ and $\partial_{n-k}, \ 1 \leq k \leq n-1$ are given by
\end{enumerate}
\end{defn}

Using models of $\Delta^n \times C$, we can define higher homotopies of $L_{\infty}[1]$-morphisms (see Definition \ref{dntchh}):
\begin{defn}[$n$-homotopies]
Let $C$ and $C'$ be $L_{\infty}[1]$-algebras and $f_0, \cdots, f_n : C \rightarrow C',$ $L_{\infty}[1]$-morphisms for $n \geq 1.$ Consider a sequence $\vec{J}$ of subsets
\begin{equation}\nonumber
\vec{J} : J_0 \subsetneq J_1 \subsetneq \cdots \subsetneq J_{n-1} \subsetneq \{0, \cdots, n\}
\end{equation}
with $|J_l| = l+1, \ 0 \leq l \leq n-1.$
We say $f_0, \cdots, f_n : C \rightarrow C'$ are $n$-\textit{homotopic} if there exist a model of $\Delta^n \times C',$ say $\mathfrak{C}'^{(n)},$ and an $L_{\infty}[1]$-morphism $h : C \rightarrow \mathfrak{C}'^{(n)}$ such that
\begin{equation}\nonumber
\Eval^{(1)}_{J_0} \circ \cdots \circ \Eval^{(n)}_{J_{n-1}} \circ h = f_j
\end{equation}
for each sequence $\vec{J}$ with $J_0 = \{j\}.$
We call such a map $h$ an \textit{n-homotopy} ($n \geq 1$) of $f_0, \cdots, f_n.$ 
\end{defn}

The main theorem of this paper, concerning a filling condition for quasi-isomorphisms, is stated as follows (see Proposition \ref{pphhe}):

\begin{thm}[Existence of filling homotopies]
Let $f_0, \cdots, f_{n+1} : C_0 \rightarrow C \ (n \geq 0)$ be quasi-isomorphic $L_{\infty}[1]$-morphisms. Suppose that we are given an $n$-homotopy $h_J : C_0 \rightarrow \mathfrak{C}^{(n)}_J$ of $f_{j_0}, \cdots, f_{j_n}$
 for each given $J = \{ j_0 < \cdots < j_n\} \subset \{0, \cdots, n+1\},$ satisfying $\Eval^{(n)}_{J \cap J'} \circ h_J = \Eval^{(n)}_{J \cap J'} \circ h_{J'}$ for two distinct $J$ and $J'.$ Then there exist  a model $\mathfrak{C}^{(n+1)}$ of $\Delta^{n+1} \times C$ and an $(n+1)$-homotopy $\overline{h} : C_0 \rightarrow \mathfrak{C}^{(n+1)}$ of $f_0, \cdots, f_{n+1}$ such that ${\mathfrak{C}^{(n)}_J}$'s belong to the data for $\mathfrak{C}^{(n+1)},$ satisfying $\mathrm{Eval}^{(n+1)}_J \circ \overline{h} = h_J.$
\end{thm} 

Applying an induction argument on $n$ to the preceding theorem, we obtain (see Corollary \ref{anhp}):

\begin{thm}[Existence of filling homotopies for quasi-isomorphisms]\label{adfasd}
Arbitrarily given quasi-isomorphic $L_{\infty}[1]$-morphisms $f_0, \cdots, f_n : C \rightarrow C' \ (n \geq 1)$ are $n$-homotopic.
\end{thm}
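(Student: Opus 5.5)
We prove Theorem \ref{adfasd} by induction on the skeleta of the simplex $\Delta^n$. At stage $m$ we will have produced, for every subset $J \subset \{0,\cdots,n\}$ with $|J| = m+1$, a model $\mathfrak{C}'^{(m)}_J$ of $\Delta^m \times C'$ and an $m$-homotopy $h_J : C \to \mathfrak{C}'^{(m)}_J$ of the maps $f_j$, $j \in J$. These data must be compatible on faces: for $|J \cap J'| = m$,
\[
\left(\mathfrak{C}'^{(m)}_J\right)_{J\cap J'} = \left(\mathfrak{C}'^{(m)}_{J'}\right)_{J \cap J'} = \mathfrak{C}'^{(m-1)}_{J\cap J'}
\quad\text{and}\quad
\Eval^{(m)}_{J\cap J'} \circ h_J = \Eval^{(m)}_{J \cap J'} \circ h_{J'} = h_{J\cap J'}.
\]
At the top stage $m = n$ there is a single subset $J = \{0,\cdots,n\}$, and the resulting $h$ is the required $n$-homotopy. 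The defining condition is the identity
\[
\Eval^{(1)}_{J_0}\circ\cdots\circ\Eval^{(n)}_{J_{n-1}}\circ h = f_j .
\]
It follows from the compatibilities by unwinding along the flag $\vec{J}$: each evaluation step replaces $h_{J_{l+1}}$ by $h_{J_l}$, and we end at $h_{\{j\}} = f_j$.

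\emph{Base case.} For $m = 0$ we set $\mathfrak{C}'^{(0)}_{\{i\}} = C'$ and $h_{\{i\}} = f_i$; these satisfy the compatibilities trivially.

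\emph{Inductive step.} Suppose stage $m-1$ has been built, with $1 \le m \le n$. Fix $J$ with $|J| = m+1$ and relabel $J \cong \{0,\cdots,m\}$. The restricted data are:
\begin{itemize}
\item for each $K \subset J$ with $|K| = m$, an $(m-1)$-homotopy $h_K$ of $f_k$, $k \in K$;
\item pairwise agreement $\Eval^{(m-1)}_{K\cap K'} \circ h_K = \Eval^{(m-1)}_{K\cap K'} \circ h_{K'}$, which holds because both sides equal $h_{K\cap K'}$;
\item the maps $f_j$, $j \in J$, which are quasi-isomorphisms by hypothesis.
\end{itemize}
These are exactly the hypotheses of Proposition \ref{pphhe} with $n$ replaced by $m-1$. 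That proposition yields a model $\mathfrak{C}'^{(m)}_J$ containing the given $\mathfrak{C}'^{(m-1)}_K$ as its face data, together with an $m$-homotopy $h_J$ satisfying $\Eval^{(m)}_K\circ h_J = h_K$.

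We perform this construction independently for each $J$. The faces of $\mathfrak{C}'^{(m)}_J$ are by construction the already-chosen models $\mathfrak{C}'^{(m-1)}_K$, which are shared among all $J \supset K$. Hence the face identities at stage $m$ hold automatically, and so does $\Eval^{(m)}_{J\cap J'}\circ h_J = h_{J\cap J'} = \Eval^{(m)}_{J\cap J'} \circ h_{J'}$.

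\emph{Anticipated obstacle.} The only delicate point is the bookkeeping that the output of Proposition \ref{pphhe} really has the prescribed faces as the literal models $\mathfrak{C}'^{(m-1)}_K$ indexed consistently under the relabeling $J \cong \{0,\cdots,m\}$, rather than merely isomorphic copies. The statement of Proposition \ref{pphhe} (``${\mathfrak{C}^{(n)}_J}$'s belong to the data'') provides exactly this. One must also check that this identification is compatible with the recursive face conditions (ii) of the definition, so that the face data are coherent across all lower-dimensional intersections. Given this, and the case $n \ge 0$ covering the first step $m = 1$, the induction closes.
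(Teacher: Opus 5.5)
Your proposal is correct and follows the paper's own proof, which consists only of the remark that the result follows by induction on $n$ using Proposition \ref{pphhe}. Your skeleton-wise induction carries along shared face models $\mathfrak{C}'^{(m-1)}_K$ and homotopies with $\Eval^{(m)}_{K}\circ h_J = h_K$; this strengthened inductive hypothesis is exactly what supplies the compatibility hypothesis (\ref{adsfdsfaag}) and the coherent face data that the paper's one-line argument leaves implicit.
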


In this paper, we always work over a field, where a strong homotopy version of the Whitehead theorem can be stated and proved following \cite{FOOO1} (see Theorem \ref{wht}):

\begin{thm}[Whitehead theorem]\label{wht1}
Over a field and for strict $L_{\infty}[1]$-algebras, a quasi-isomorphism is a homotopy equivalence.
\end{thm}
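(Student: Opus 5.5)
We follow the strategy of \cite{FOOO1} for the $A_{\infty}$ case, adapted to $L_{\infty}[1]$-algebras. We first pass to a minimal model. Over a field, the homological perturbation lemma gives a canonical (minimal) $L_{\infty}[1]$-structure on the cohomology $H = H(C,l_1)$. It also gives $L_{\infty}[1]$-quasi-isomorphisms $\iota : H \to C$ and $\pi : C \to H$ with $\pi \circ \iota = \mathrm{id}$. Composing $\iota$ and $\pi$ with the minimal model of the other algebra yields the required quasi-isomorphisms in the reverse direction. This reduces the claim to the assertion that $\iota \circ \pi$ is homotopic to $\mathrm{id}_C$ in the sense of the definition of homotopy via a model of $\Delta^1 \times C$. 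We may also assume $C$ is strict (i.e.\ has no curvature term $l_0$), as the statement requires.

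Next we fix a concrete model of $\Delta^1 \times C$: $\mathfrak{C} = C \otimes \Omega(\Delta^1)$, where $\Omega(\Delta^1)$ is polynomial de Rham forms $\{a(t) + b(t)\,dt\}$. The brackets $l_k$ are extended $\Omega$-linearly, with the usual Koszul signs in the shifted grading. The maps are defined as follows:
\begin{itemize}
\item $\Eval_j$ evaluates at $t=j$ and kills $dt$;
\item $\Incl$ sends $x \mapsto x \otimes 1$.
\end{itemize}
Properties (i)--(iv) of Definition \ref{ahgadsfjkl} are then immediate. In particular, $\Eval_j$ and $\Incl$ are strict morphisms, and surjectivity in (iv) holds using the forms $1-t$ and $t$. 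Moreover, $\Incl_1$ is a quasi-isomorphism by the Poincar\'e lemma for $\Omega(\Delta^1)$, which holds since we work over a field of characteristic zero.

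The core step is to construct $h : C \to \mathfrak{C}$ with $\Eval_0 \circ h = \mathrm{id}$ and $\Eval_1 \circ h = \iota\circ\pi$. We use the homotopy-transfer data: a contraction $(\iota_1, \pi_1, G)$ with $\mathrm{id} - \iota_1\pi_1 = l_1 G + G l_1$. At the linear level we set
\[
h_1(x) = x\otimes(1-t) + \iota_1\pi_1 x \otimes t + G x \otimes dt,
\]
up to sign. One checks directly that $h_1$ is a chain map with the right endpoints. The higher components $h_k$ are then built inductively on $k$ by the standard obstruction argument. The obstruction to extending $h_{\le k-1}$ is a cycle in $\mathrm{Hom}(E_k C, \mathfrak{C})$. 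It must be killed while preserving both endpoint conditions, so we solve in the kernel $\ker(\Eval_0 \oplus \Eval_1) = C \otimes \Omega_0(\Delta^1)$. That kernel is acyclic: by (iv), the sequence
\[
0 \to \ker \to \mathfrak{C} \to C \oplus C \to 0
\]
is exact, and $\Incl$ together with the diagonal identify the cohomologies. Hence the relative obstruction is exact, provided its endpoint components vanish. These components vanish because $\mathrm{id}$ and $\iota\circ\pi$ are genuine $L_{\infty}[1]$-morphisms.

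The main obstacle is exactly this inductive step. One must verify that the obstruction class, after subtracting the endpoint constraints, really lands in the acyclic kernel as a cycle. This requires a careful sign check of the $L_{\infty}[1]$-morphism equation on symmetric (not tensor) coalgebras. Once $\iota\circ\pi \sim \mathrm{id}_C$ and $\pi\circ\iota = \mathrm{id}_H$ are established, the remaining step is to show that homotopy is an equivalence relation compatible with composition; we plan to do this by the same obstruction argument with $\Delta^2$-type models. Combining these gives that any quasi-isomorphism $f : C\to C'$ has a homotopy inverse: take $g = \iota \circ (H(f))^{-1}\circ \pi'$, where $H(f)$ is an isomorphism between minimal models. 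A minimal $L_{\infty}[1]$-quasi-isomorphism is invertible, since its linear part is invertible.
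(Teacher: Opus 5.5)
\textbf{There is a genuine gap in your core inductive step: the kernel $\ker(\Eval_0\oplus\Eval_1)$ is not acyclic.} Your reduction to minimal models is a legitimate route different from the paper's. The paper never uses minimal models; it starts from a chain-level $g_1$ with $g_1 f_1$ chain homotopic to the identity and builds $g$ and $h$ order by order. But your inductive step rests on the acyclicity claim, and it is false.

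Here $\ker(\Eval_0\oplus\Eval_1)=C\otimes\Omega_0(\Delta^1)$. In the long exact sequence of $0\to\ker\to\mathfrak{C}\to C\oplus C\to 0$, the map $H(\mathfrak{C})\cong H(C)\to H(C)\oplus H(C)$ is the diagonal. The diagonal is injective but not surjective unless $H(C)=0$, so $H(\ker)$ is $H(C)$ shifted by one. Concretely, $x\otimes b(t)\,dt\mapsto (\int_0^1 b\,dt)\,x$ is a quasi-isomorphism onto a shift of $C$; for instance $dt$ is not the differential of a function vanishing at both ends.

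There is also a smaller slip: the $\Eval_1$-component of $O_k(h)$ is $O_k(\iota\circ\pi)=-\delta_1((\iota\pi)_k)$, which is exact but not zero. You must first subtract a lift such as $(\iota\pi)_k\otimes t$. After that, the relative obstruction is a cocycle in $\mathrm{Hom}(S^kC,\ker)$. Its class lies in a group isomorphic to a shift of $\mathrm{Hom}(H(S^kC),H(C))$, and nothing in your argument forces it to vanish. With both endpoints prescribed, the induction does not go through as written.

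\textbf{How the paper avoids this.} It never prescribes both endpoints independently. In the corollary after Lemma \ref{fasgg} and in Proposition \ref{prevprop}, only $\Eval_{s=0}\circ h=\mathrm{id}$ is imposed. The relevant kernel $\ker(\Eval_{s=0})_1$ is acyclic because $(\Eval_{s=0})_1$ is a surjective quasi-isomorphism. The other endpoint is $g\circ f$, with $g$ built simultaneously. The leftover class $[\Xi]\in H(\mathrm{Hom}(S^{K+1}C_1,C_1))$ plays the role of your missing obstruction. It is killed by changing $g_{K+1}$ by a cocycle $\Delta g'_{K+1}$, which is possible because precomposition with $f_1^{\otimes K+1}$ is a quasi-isomorphism of Hom complexes.

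\textbf{One way to repair your route.} You need a comparable extra freedom or constraint. For example, also carry along $h\circ\iota=\Incl\circ\iota$. Your $h_1$ satisfies this, thanks to the side conditions $\pi_1\iota_1=\mathrm{id}$ and $G\iota_1=0$. Choose the preliminary $h_k$ to meet all three constraints (both endpoints and the values forced on $S^k\iota_1(H)$). Then the relative obstruction vanishes on $S^k\iota_1(H)$ and lies in $\mathrm{Hom}(S^kC/S^k\iota_1(H),\ker)$. That complex is acyclic in characteristic zero, since $S^k\iota_1$ is an injective quasi-isomorphism.

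\textbf{Deferred facts.} Transitivity of homotopy and its compatibility with composition, which you only plan to prove, are available in the paper. They are given by the fibre-product lemma and by Lemma \ref{eerel} via Theorem \ref{th}.
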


The motivation for this work is to make use of the aforementioned structures in developing a new, categorical definition of Kuranishi spaces \cite{Kim1}, \cite{Kim2}. We remark that Theorem \ref{wht1} plays an essential role therein. It is through the notion of higher homotopies together with Theorem \ref{adfasd} that the cocycle-like conditions for Kuranishi spaces are formulated, contributing to a construction of more flexible geometric objects compared to the strict and rigid existing definition in \cite{FOOO2}.

\subsection{Outline of the paper}

We outline the structure of this paper. In Section 2, we briefly recall the definition of $L_{\infty}[1]$-algebras and their morphisms. Section 3 introduces the Fukaya-Oh-Ohta-Ono homotopy in the $L_{\infty}[1]$-setting and proves the Whitehead theorem.In Section 4, we develop a version of $L_{\infty}[1]$-homotopy theory in higher degrees and show that a filling condition holds for simplices whose vertices are assigned quasi-isomorphisms.

\subsection*{Acknowledgment}  This work was supported by the BK21 FOUR program funded by the Ministry of Education (MOE), Korea (No. 4120240414885).

\section{$L_{\infty}[1]$-algebras}

In this section, we briefly introduce $L_{\infty}[1]$-algebras mainly to fix the notation. We first recall the notion of \textit{graded symmetric algebra} $S C$ of a vector space $C$ over a field $\mathbf{k},$
$$
S C := TC/ \langle v \otimes v' - (-1)^{|v|\cdot|v'|} v' \otimes v \rangle,
$$
with its degree $k$ component $S^k C := \{v \in S C \mid v \text{ is homogeneous of degree } k\}.$
We have a decomposition
$$
S C = \bigoplus\limits_{k=0}^{\infty} S^k C
$$
with the induced product $\odot$ on each component.
We denote by $\text{Sh}(i, k-i)$ the set of $(i, k-i)$-unshuffles, and the sign $\sgn(\tau)$ for $\tau \in \text{Sh}(i, k-i)$ is defined for homogeneous elements $a_1, \cdots, a_k \in C$, we write
$$
a_{\tau(1)} \odot \cdots \odot a_{\tau(k)} = \sgn(\tau) a_1 \odot \cdots \odot a_k.
$$

\begin{defn}
An \textit{$L_{\infty}[1]$-algebra} is a pair $\left(C, \{l_k\}\right)$ consisting of a vector space $C$ and a family of degree 1 linear maps
$$
l_k : S^k C \rightarrow C, \ k \geq 0,
$$
satisfying the relations
\begin{equation}\label{quadrel}
\sum\limits_{i = 0}^{k} \sum\limits_{\tau \in {\text{Sh}}(i, k-i)} {\sgn(\tau)} l_{k-i+1}\left(l_i(a_{\tau(1)}, \cdots, a_{\tau(i)}),a_{\tau(i+1)}, \cdots, a_{\tau(k)}\right) = 0.
\end{equation}
\end{defn}

\begin{defn}
Let $(C,\{l_k\})$ and $(C', \{l'_k\})$ be two $L_{\infty}[1]$-algebras. An $L_{\infty}[1]$\textit{-algebra morphism}, or simply $L_{\infty}[1]$\textit{-morphism}
\begin{equation}\label{lrel}
f : C \rightarrow C'
\end{equation}
is a family of degree 0 linear maps
$$
f_k : S^kC \rightarrow C', \ k \geq 0,
$$
satisfying the relations
\begin{equation}\label{frel}
\begin{split}
\sum\limits_{i = 0}^{k}& \sum\limits_{\tau \in {\text{Sh}}(i, k-i)} {\sgn(\tau)} f_{k-i+1}\left(l_i(a_{\tau(1)}, \cdots, a_{\tau(i)}),a_{\tau(i+1)}, \cdots, a_{\tau(k)}\right)\\
&= \sum\limits_{\substack{t, j_1, \cdots, j_t \geq 1,\\ j_1 + \cdots + j_t = k}} \sum\limits_{\tau \in S_k}  \frac{\sgn(\tau)}{t! j_1! \cdots j_t!} \  l'_{t}\bigl(f_{j_1}(a_{\tau(1)}, \cdots, a_{\tau(j_1)}), \cdots, \\
& \quad \quad \quad \quad \quad \quad \quad \quad \quad \quad \quad \quad f_{j_t}(a_{\tau(k -(j_1 + \cdots + j_{t-1}))}, \cdots, a_{\tau(k)})\bigr).
\end{split}
\end{equation}
Here, $S_k$ denotes the symmetric group of permutations of $k$ elements. 
\end{defn}

\begin{defn}
For two $L_{\infty}[1]$-morphisms
$$
f : C \rightarrow C', \ g: C' \rightarrow C'',
$$
we define their \textit{composition}
$$
g \circ f : C \rightarrow C''
$$
by a family of linear maps of degree 0 for $k \geq 0$
\begin{equation}\nonumber
\begin{split}
(g \circ f)_k := &\sum\limits_{i = 0}^{k} \sum\limits_{\tau \in S_k}  \frac{\sgn(\tau)}{t! j_1! \cdots j_t!} \ g_{t}\bigl(f_{j_1}(a_{\tau(1)}, \cdots, a_{\tau(j_1)}), \cdots,\\
&\quad \quad\quad \quad\quad \quad\quad \quad f_{j_t}(a_{\tau(k -(j_1 + \cdots + j_{t-1}))}, \cdots, a_{\tau(k)})\bigr).
\end{split}
\end{equation}
It is straightforward to verify that $\{(g \circ f)_k\}_{k \geq 0}$ satisfies the relation (\ref{frel}).
\end{defn}

\begin{defn}
We say an $L_{\infty}[1]$-algebra $\{l_k\}_{k \geq 0}$ is \textit{strict} if $l_0 = 0.$ Otherwise, we say it is \textit{curved.} We similarly define \textit{strict/curved} $L_{\infty}[1]$-morphisms.
\end{defn}

In the strict case, the relations (\ref{lrel}) and (\ref{frel}) coincide with the differential and the chain map relations, respectively. That is, they satisfy
$$
\begin{cases}
l_1 \left( l_1(a) \right) = 0,\\
l'_1 \left(f_1 (a) \right) = f_1 \left( l_1(a) \right).
\end{cases}
$$
\begin{defn}
We say that a strict $L_{\infty}[1]$-algebra $(C,\{l_k\})$ is \textit{acyclic} if its cohomology for each degree vanishes, that is, if 
\[
H^*(C) = \frac{\ker{l_1}}{\mathrm{Im}l_1} = 0.
\]
We say that a strict $L_{\infty}[1]$-morphism $\{f_k\}_{k \geq 1}$ between strict $L_{\infty}[1]$-algebras is a \textit{quasi-isomorphism} if $f_1$ is a quasi-isomorphic chain map.
\end{defn}

$L_{\infty}[1]$-algebras can be equivalently described within the framework of coalgebras.

\begin{defn}
We say that the vector space $\mathscr{C}$ is a \textit{coalgebra} if it is equipped with the following two linear maps  
\begin{equation}
\begin{cases}
\Delta : \mathscr{C} \rightarrow \mathscr{C} \otimes  \mathscr{C},\\
\varepsilon : \mathscr{C} \rightarrow \mathbf{k},
\end{cases}
\end{equation}
called \textit{comultiplication} and \textit{counit}, respectively. 
We require them to satisfy
\begin{enumerate}[label = (\roman*)]
\item $(\mathrm{id}_{\mathscr{C}} \otimes \Delta) \circ \Delta = (\Delta \otimes \mathrm{id}_{\mathscr{C}}) \circ \Delta,$
\item $(\mathrm{id}_{\mathscr{C}} \otimes \varepsilon) \circ \Delta = (\varepsilon \otimes \mathrm{id}_{\mathscr{C}}) \circ \Delta.$
\end{enumerate}

In our case of graded symmetric algebra $\mathscr{C},$ $\Delta$ is given by 
\[
\Delta : a_1 \odot \cdots \odot a_k \mapsto \sum\limits_{i = 1}^{k-1} \sum\limits_{\tau \in {Sh}(i, k-i)} {\sgn(\tau)} a_{\tau(1)} \odot \cdots \odot a_{\tau(i)} \otimes a_{\tau(i+1)}\odot \cdots \odot a_{\tau(k)} = 0,
\]
while $\varepsilon$ is by the projection to $k = 0$ component.

A coalgebra $\mathscr{C}$ is said to be \textit{coassociative} if
$$
(\mathrm{id}_{\mathscr{C}} \otimes \Delta) \circ \Delta = (\Delta \otimes \mathrm{id}_{\mathscr{C}}) \circ \Delta,
$$
and \textit{cocommutative} if
$$
S \circ \Delta = \Delta,
$$
where the map ${S} : \mathscr{C} \otimes \mathscr{C} \rightarrow \mathscr{C} \otimes \mathscr{C}$ is given by ${S}(a \otimes b) = (-1)^{|a| \cdot |b|} b \otimes a.$
\end{defn}

It is straightforward to verify the following lemma:
\begin{lem}
$(S C ,\Delta, \varepsilon)$ is a cocommutative, coassociative coalgebra.
\end{lem}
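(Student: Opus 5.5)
We must show that $(SC,\Delta,\varepsilon)$ is coassociative, cocommutative and counital. Throughout we read $\Delta$ as the \emph{full} unshuffle coproduct, with $i$ running from $0$ to $k$ and $S^0C=\mathbf{k}\cdot 1$. The displayed formula, with $1\le i\le k-1$, is the reduced coproduct on $\bigoplus_{k\ge1}S^kC$. For that reduced version coassociativity and cocommutativity go through by the same argument, but the counit axiom only holds once the $i=0$ and $i=k$ terms are restored. Every identity involved is linear and compatible with the $\odot$-quotient, so it suffices to check each one on a homogeneous monomial $a_1\odot\cdots\odot a_k$.

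\textbf{Counit.} Restoring the terms $i=0$ and $i=k$ gives
\[
\Delta(a_1\odot\cdots\odot a_k)=1\otimes(a_1\odot\cdots\odot a_k)+(a_1\odot\cdots\odot a_k)\otimes 1+\cdots,
\]
and every remaining term has both tensor factors of positive word length. Since $\varepsilon$ is the projection onto $S^0C$, it kills all of these remaining terms. Hence
\[
(\mathrm{id}\otimes\varepsilon)\circ\Delta=\mathrm{id}=(\varepsilon\otimes\mathrm{id})\circ\Delta,
\]
under the identifications $\mathscr{C}\otimes\mathbf{k}\cong\mathscr{C}\cong\mathbf{k}\otimes\mathscr{C}$. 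This is condition (ii) in its intended form.

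\textbf{Coassociativity.} Apply both sides to $a_1\odot\cdots\odot a_k$.
\begin{itemize}
\item $(\Delta\otimes\mathrm{id})\circ\Delta$ first unshuffles into blocks $(I,K)$ and then splits $I$ into $(I_1,I_2)$.
\item $(\mathrm{id}\otimes\Delta)\circ\Delta$ first unshuffles into $(I_1,K')$ and then splits $K'$ into $(I_2,I_3)$.
\end{itemize}
Both sides are therefore indexed by ordered decompositions of $\{1,\dots,k\}$ into three subsets, i.e.\ by $(i_1,i_2,i_3)$-unshuffles. The sign attached to a given decomposition is the Koszul sign of the permutation taking $a_1,\dots,a_k$ to $a_{\tau(1)},\dots,a_{\tau(k)}$. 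The Koszul sign is multiplicative under composition of permutations, because $\sgn(\tau)$ is defined intrinsically by the relation $a_{\tau(1)}\odot\cdots\odot a_{\tau(k)}=\sgn(\tau)\,a_1\odot\cdots\odot a_k$. So on either side the product of the two unshuffle signs equals the sign of the three-block unshuffle. This gives a bijection of terms with equal signs, and the two sides agree.

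\textbf{Cocommutativity.} Under $S$, the term indexed by $(I,K)$ becomes $(-1)^{|a_I||a_K|}\,a_K\otimes a_I$. Swapping the two blocks of an unshuffle changes the Koszul sign by exactly $(-1)^{|a_I||a_K|}$, since $a_I\odot a_K=(-1)^{|a_I||a_K|}a_K\odot a_I$ in $SC$. Hence the term of $S\circ\Delta$ indexed by $(I,K)$ coincides with the term of $\Delta$ indexed by $(K,I)$. Since $(I,K)\mapsto(K,I)$ is a bijection on the index set, $S\circ\Delta=\Delta$.

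\textbf{Well-definedness and the main subtlety.} The only real bookkeeping is that $\Delta$ is well defined on the quotient $SC$. This follows because $\Delta$ is the algebra map $TC\to SC\otimes SC$ determined by $v\mapsto v\otimes1+1\otimes v$, and this map kills the graded commutators $v\otimes v'-(-1)^{|v||v'|}v'\otimes v$. Describing $\Delta$ as the algebra map extending the primitive elements $v\mapsto v\otimes1+1\otimes v$ also makes coassociativity immediate: both sides are algebra maps that agree on generators. The sign identity used in the coassociativity step is therefore the main point to be careful with.
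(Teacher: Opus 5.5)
Your argument is correct. It is the routine verification that the paper leaves to the reader, since the paper says only that the lemma is ``straightforward to verify''. Your closing description of $\Delta$ as the algebra map determined by $v\mapsto v\otimes 1+1\otimes v$ is the cleanest way to organize it. It gives well-definedness and coassociativity at once, and also cocommutativity, after composing with the graded flip $S$, which is itself an algebra map. This replaces the unshuffle-sign bookkeeping with a single check on generators.

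Your remark about the displayed formula is also right. With only the terms $1\le i\le k-1$, one gets $(\mathrm{id}\otimes\varepsilon)\circ\Delta=0=(\varepsilon\otimes\mathrm{id})\circ\Delta$, so the paper's condition (ii) holds only vacuously. The genuine counit axiom needs the $i=0$ and $i=k$ terms restored. Note that these two terms coincide when $k=0$, giving $\Delta(1)=1\otimes 1$.
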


To describe $L_{\infty}[1]$-algebras using coalgebras, we introduce coderivations.
\begin{defn}
A coderivation is a degree 1 linear map
\[
d :  \mathscr{C} \rightarrow  \mathscr{C},
\]
satisfying the condition
\[
d \circ \Delta = (d \otimes \mathrm{id}_{\mathcal{C}} + \mathrm{id}_{\mathcal{C}} \otimes d) \circ \Delta.
\]
We say that a coderivation $d$ is a \textit{codifferential} if it further satisfies $d \circ d = 0.$
\end{defn}

\begin{lem}\label{aglem}
An $L_{\infty}[1]$-algebra structure on $C$ uniquely determines a cocommutative, coassociative coalgebra structure on $S C$ equipped with a codifferential.
\end{lem}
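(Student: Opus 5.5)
The plan is the standard correspondence between the $l_k$ and a codifferential on the cofree cocommutative coalgebra $SC$. Given $\{l_k\}_{k\ge 0}$, I will define a linear map $d : SC \to SC$ by
\[
d(a_1\odot\cdots\odot a_k) := \sum_{i=0}^{k}\sum_{\tau\in \mathrm{Sh}(i,k-i)} \sgn(\tau)\, l_i(a_{\tau(1)},\dots,a_{\tau(i)})\odot a_{\tau(i+1)}\odot\cdots\odot a_{\tau(k)},
\]
where the term with $i=0$ inserts $l_0(1)$. This is the unique extension of the family $\{l_k\}$, viewed as a single map $l : SC \to C$, to a coderivation. The coalgebra structure $(\Delta,\varepsilon)$ on $SC$ is fixed independently of the $l_k$, and by the preceding lemma it is cocommutative and coassociative. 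So the content of the statement is that $d$ is a coderivation, that it is uniquely determined, and that $d\circ d=0$.

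First, I check that $d$ is a coderivation, i.e.\ $\Delta\circ d=(d\otimes \mathrm{id}+\mathrm{id}\otimes d)\circ\Delta$. I will do this by evaluating both sides on $a_1\odot\cdots\odot a_k$ and matching terms.
\begin{itemize}
\item On the left, a term of $d$ applies $l_i$ to a block of inputs, and $\Delta$ then splits the resulting word into two pieces.
\item The piece containing the output $l_i(\cdots)$ corresponds, on the right, to first splitting $a_1\odot\cdots\odot a_k$ so that the $l_i$-inputs lie on that side, and then applying $d$ there.
\end{itemize}
The bijection is between pairs (unshuffle for $l_i$, unshuffle for $\Delta$) and pairs (unshuffle for $\Delta$, unshuffle for $l_i$ within one factor). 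The Koszul signs agree because unshuffle signs compose multiplicatively. Because $d$ has degree 1, applying it on the second tensor factor picks up the sign $(-1)^{|x|}$, where $x$ is the first factor; this is the usual Koszul convention implicit in $\mathrm{id}\otimes d$.

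Second, uniqueness. Let $\pi_1 : SC\to C$ be the projection. I claim any coderivation $D$ is determined by $\pi_1\circ D$. The argument is by induction on word length, using the fact that the reduced comultiplication is injective on $S^{\ge 2}C$ over a field of characteristic zero (implicitly assumed, since the paper uses $1/t!$). In more detail, the coderivation identity expresses the components of $D$ of tensor length $\ge 2$ through lower components already determined. Since $\pi_1\circ d=l$, the coderivation extending $\{l_k\}$ is unique.

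The main step is $d\circ d=0$. The commutator $[d,d]=2\,d\circ d$ of an odd coderivation with itself is again a coderivation. By the uniqueness argument, it therefore suffices to show $\pi_1\circ d\circ d=0$. Expanding, $\pi_1 d d(a_1\odot\cdots\odot a_k)$ equals
\[
\sum_i\sum_{\tau} \sgn(\tau)\, l_{k-i+1}\bigl(l_i(a_{\tau(1)},\dots,a_{\tau(i)}),a_{\tau(i+1)},\dots,a_{\tau(k)}\bigr),
\]
which vanishes exactly by relation (\ref{quadrel}). I expect the main obstacle to be the sign bookkeeping in the first step, which is where the cocommutativity and the unshuffle signs must be reconciled. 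Everything else is formal once the extension formula and the uniqueness-of-coderivations principle are in place.
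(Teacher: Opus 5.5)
Your proposal is correct and takes the same route as the paper: the paper builds the same unshuffle coextension $\widehat{l}$ of the $l_k$ and simply asserts it is a codifferential, while you supply the omitted verification (coderivation property, uniqueness of a coderivation given its $\pi_1$-component, and $d\circ d=0$ reduced to $\pi_1\circ d\circ d=0$, which is exactly (\ref{quadrel})). Your inclusion of the $i=0$ term $l_0(1)$ is in fact necessary to match (\ref{quadrel}) in the curved case, since the paper's $\widehat{l}$ starts at $i=1$; note, however, that this requires the counital coproduct on $SC$ including the terms with an $S^0C$ factor, because for the reduced $\Delta$ displayed in the paper the $l_0$-part fails to be a coderivation.
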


\begin{proof}
Each linear map $l_k : S^k C \rightarrow C$ induces a map
$$
\widehat{l}_k : S C \rightarrow S C
$$
given by
$$
\widehat{l}_k(a_1 \odot \cdots \odot a_k) := \sum\limits_{\sigma \in \text{Sh}(i, k -i)} \text{\sgn}(\sigma)l_k(a_{\sigma(1)}, \cdots ,a_{\sigma(i)}) \odot a_{\sigma(i+1)} \odot \cdots \odot a_{\sigma(k)}.
$$

For each component $a_1 \odot \cdots \odot a_k \in S^k C,$ we formally denote
\[
\widehat{l} := \widehat{l}_1 + \widehat{l}_2 + \cdots : SC \rightarrow SC,
\]
which is defined for each $k$ by
$$
\widehat{l} (a_1 \odot \cdots \odot a_k) :=  \sum\limits_{i =1}^{k} \sum\limits_{\sigma \in \text{Sh}(i, k -i)} \text{\sgn}(\sigma)l_k(a_{\sigma(1)}, \cdots ,a_{\sigma(i)}) \odot a_{\sigma(i+1)} \odot \cdots \odot a_{\sigma(k)},
$$
$\widehat{l}$ can be readily verified to be a codifferential on $SC.$

\end{proof}

\section{Homotopies of $L_{\infty}[1]$-morphisms}

In this section, we state an $L_\infty[1]$-algebra version of \cite[Section 4.2]{FOOO1}, and do not claim much originality.

\subsection{Homotopies of $L_\infty[1]$-morphisms}
The material of this subsection is largely a rewriting of \cite[Section 4.2]{FOOO1} in the $L_\infty[1]$-framework, introducing homotopy of $L_{\infty}[1]$-morphisms and related results.

\begin{defn}[Models of $\Delta^1 \times C$]\label{mdl1}
Let $C$ be an $L_{\infty}[1]$-algebra. We say an $L_{\infty}[1]$-algebra $\mathfrak{C}$ is a \textit{model} of $\Delta^1 \times C$ if there exist $L_{\infty}[1]$-morphism
\[
\Eval_j : \mathfrak{C} \rightarrow C, \ \ j = 0,1\\
\]
and
\[
\Incl : {C} \rightarrow \mathfrak{C},
\]
with the following properties:
\begin{enumerate}[label = (\roman*)]
\item $\left(\Eval_j\right)_{k \geq 2} \equiv 0, \ j =0,1, \ \Incl_{k \geq 2} \equiv 0.$
\item $\Eval_j, \ j =0,1$ and  $\Incl$ are quasi-isomorphisms.
\item $(\Eval_j)_1 \circ \Incl = \mathrm{id}_C.$
\item $(\Eval_0)_1 \oplus (\Eval_{1})_1 : \mathfrak{C} \rightarrow {C} \oplus C$ is \textit{surjective.}
\end{enumerate}
\end{defn}

Using the notion of models, we can define homotopies between $L_{\infty}[1]$-morphisms:

\begin{defn}[Homotopy]\label{defn:homotopy}
We say that two $L_{\infty}[1]$-morphisms $f_0, f_1 : (C, \{l_k\}) \rightarrow (C', \{l'_k\})$ are \textit{homotopic} if there exist a \textit{model of} $\Delta^1 \times C'$ denoted by $\mathfrak{C}'$ and an $L_{\infty}[1]$-morphism $h: C \rightarrow \mathfrak{C}'$ such that we have $f_j = \Eval_j \circ h, \ j = 0,1.$
\end{defn}

\begin{lem}
Homotopies define an equivalence relation.
\end{lem}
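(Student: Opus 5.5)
We check reflexivity, symmetry and transitivity in turn, following \cite[Section 4.2]{FOOO1}. Throughout, $f_0,f_1,f_2 : C \rightarrow C'$ are $L_{\infty}[1]$-morphisms.

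\emph{Reflexivity.} First we need a model of $\Delta^1 \times C'$. The natural choice is the ``polynomial de Rham'' model
\[
\mathfrak{C}' := C' \otimes \Omega^*(\Delta^1) = C'[t] \oplus C'[t]\,dt.
\]
Its operations are the $\mathbf{k}[t,dt]$-linear extensions of the $l'_k$, with $l_1$ twisted by the de Rham differential $d/dt$ (with the usual degree shift and signs). The structure maps are:
\begin{enumerate}[label = (\alph*)]
\item $\Eval_j$ is evaluation at $t=j$, killing $dt$;
\item $\Incl$ is the inclusion as constants.
\end{enumerate}
Properties (i), (iii) and (iv) of Definition \ref{mdl1} are immediate. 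For (ii), note that $\Omega^*(\Delta^1)$ is quasi-isomorphic to $\mathbf{k}$ via $\Incl$. Over a field, Künneth then shows $\Incl_1$ is a quasi-isomorphism, and hence so is $(\Eval_j)_1$, since $(\Eval_j)_1\circ\Incl_1=\mathrm{id}$. Given $f$, we set $h := \Incl \circ f$, so that $\Eval_j\circ h = f$ for $j=0,1$.

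\emph{Symmetry.} Given a model $\mathfrak{C}'$ with $h$ realizing $f_0 \sim f_1$, we keep the same algebra and $\Incl$ but swap the labels $\Eval_0 \leftrightarrow \Eval_1$. All axioms are symmetric in $j$, so $h$ exhibits $f_1\sim f_0$.

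\emph{Transitivity.} This is the main step. Suppose $h: C\rightarrow \mathfrak{C}'$ realizes $f_0\sim f_1$ and $h' : C \rightarrow \mathfrak{C}''$ realizes $f_1 \sim f_2$. Form the fiber product
\[
\mathfrak{C}''' := \mathfrak{C}' \times_{C'} \mathfrak{C}'' = \{(x,y) : (\Eval_1)_1 x = (\Eval'_0)_1 y\}.
\]
\begin{enumerate}[label = (\alph*)]
\item \emph{Algebra structure.} Because the Eval maps are strict (their $k\ge 2$ components vanish), they commute with all $l_k$. Hence $\mathfrak{C}'''$ is closed under the componentwise operations and is an $L_\infty[1]$-subalgebra of the product.
\item \emph{Structure maps.} Define $\Eval'''_0 := \Eval_0\circ \mathrm{pr}_1$, $\Eval'''_1 := \Eval'_1\circ\mathrm{pr}_2$ and $\Incl''' := (\Incl,\Incl')$. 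Properties (i) and (iii) are clear.
\item \emph{Surjectivity (iv).} Given $(a,b)$, surjectivity of $(\Eval_0)_1\oplus(\Eval_1)_1$ gives $x$ with $(\Eval_0)_1 x = a$. Then surjectivity for $\mathfrak{C}''$ gives $y$ with $(\Eval'_0)_1 y = (\Eval_1)_1 x$ and $(\Eval'_1)_1 y = b$.
\item \emph{Quasi-isomorphism (ii).} This is the point needing care. Since $(\Eval'_0)_1$ is surjective, there is a short exact sequence of complexes
\[
0 \rightarrow \mathfrak{C}''' \rightarrow \mathfrak{C}'\oplus\mathfrak{C}'' \xrightarrow{(\Eval_1)_1 - (\Eval'_0)_1} C' \rightarrow 0.
\]
All of $\mathfrak{C}', \mathfrak{C}'', C'$ have cohomology $H(C')$, and the induced map on cohomology is $(u,v)\mapsto u-v$, which is surjective. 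So the long exact sequence splits into short exact pieces, and $H(\mathfrak{C}''')\cong H(C')$, realized by $\Incl'''$ (the diagonal). Then $\Eval'''_j$ are quasi-isomorphisms because $\Eval'''_j\circ\Incl'''=\mathrm{id}$.
\item \emph{The homotopy.} Set $h'' := (h,h')$. Its components land in $\mathfrak{C}'''$ because $\Eval_1\circ h = f_1 = \Eval'_0\circ h'$ holds as an identity of $L_\infty[1]$-morphisms, componentwise in $k$, using strictness of the Eval maps. It is an $L_\infty[1]$-morphism since both components are. Finally $\Eval'''_0\circ h'' = f_0$ and $\Eval'''_1\circ h''=f_2$.
\end{enumerate}
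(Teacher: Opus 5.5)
Your proposal is correct and follows the paper's route. Reflexivity and symmetry are immediate; your de Rham model is the $n=1$ case of the paper's Example \ref{hhex}. Transitivity uses the same fiber product $\{(x,y) : (\Eval_1)_1 x = (\Eval_0)_1 y\}$ with the componentwise structure and the pair $(h,h')$ as the homotopy.

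You go beyond the paper in two ways. You verify the model axioms (surjectivity, and the quasi-isomorphism property via the short exact sequence) that the paper leaves unchecked. Your definitions $\Eval'''_0=\Eval_0\circ\mathrm{pr}_1$ and $\Eval'''_1=\Eval'_1\circ\mathrm{pr}_2$ are the correct form of the paper's evaluation maps on the fiber product.
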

\begin{proof}
Reflexivity and symmetry trivially hold. For transitivity, let
$h_1 : C \rightarrow \mathfrak{C}'_1$ and $h_2 : C \rightarrow \mathfrak{C}'_2$ be homotopies.
Then we define
\begin{equation}\nonumber
h' : C \rightarrow \overline{\mathfrak{C}}'
\end{equation}
by
\begin{equation}\nonumber
h'(x) := \big( h_1(x), h_2(x) \big),
\end{equation}
where $\overline{\mathfrak{C}}'$ is a model of $\Delta^1 \times C'$
given by
\begin{equation}\nonumber
\overline{\mathfrak{C}}' := \{ (x,y) \in \mathfrak{C}_1 \times \mathfrak{C}_2 \mid \text{deg}x = \text{deg}y, (\Eval_1)_1 x = (\Eval_0)_1y \},
\end{equation}
The $L_{\infty}[1]$-structure $\{\overline{l}_k\}$ on $\overline{\mathfrak{C}}'$ is given by
\begin{equation}\label{pluslinfty}
\overline{l}_k\big((x_1, y_1), \cdots, (x_k, y_k)\big) :=  \big( l_k(x_1, \cdots, x_k), l'_k(y_1, \cdots, y_k) \big),
\end{equation}\nonumber
and the maps Eval$_j, \ j =0,1$ and Incl by
\begin{equation}\label{pluseval}
\begin{split}
&(\Eval_{0})_1(x,y) =  \big( (\Eval_{0})_1(x),(\Eval_{0})_1(y) \big), \\
& (\Eval_{1})_1(x,y) =  \big( (\Eval_{1})_1(x),(\Eval_{1})_1(y) \big),\\
&\Incl_1(x) = \big(\Incl_1(x), \Incl_1(x)\big).
\end{split}
\end{equation}
\end{proof}

\begin{lem}\label{pluslemma}
Let $f, g : C_1 \rightarrow C_2$ and $f', g' : C'_1 \rightarrow C'_2$ be homotopic pairs of $L_{\infty}[1]$-morphisms. Then $f \oplus g$ is homotopic to $f' \oplus g'$ as $L_{\infty}[1]$-morphisms from $C_1 \oplus C_1'$ to $C_2 \oplus C'_2.$
\end{lem}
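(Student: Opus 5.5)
The plan is to build the homotopy explicitly from the two given ones, using a direct-sum model. Fix homotopies $h: C_1 \rightarrow \mathfrak{C}_2$ from $f$ to $g$ and $h': C'_1 \rightarrow \mathfrak{C}'_2$ from $f'$ to $g'$. Here $\mathfrak{C}_2$ is a model of $\Delta^1 \times C_2$ and $\mathfrak{C}'_2$ a model of $\Delta^1 \times C'_2$, with $\Eval_0 \circ h = f$, $\Eval_1 \circ h = g$, and similarly for $h'$. (The statement should read as ``$f \oplus f'$ is homotopic to $g \oplus g'$''; that is what I will prove.)

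First I put an $L_\infty[1]$-structure on $\mathfrak{C}_2 \oplus \mathfrak{C}'_2$ and check that it is a model of $\Delta^1 \times (C_2 \oplus C'_2)$. I use the direct-sum structure, as in (\ref{pluslinfty}): $l^{\oplus}_k((x_1,y_1),\dots,(x_k,y_k)) = (l_k(x_1,\dots,x_k), l'_k(y_1,\dots,y_k))$. Multilinearity on $S^k$ of a direct sum forces us to declare that mixed terms vanish; equivalently, $S(V\oplus W)=SV\otimes SW$ and the codifferential is $\widehat{l}\otimes 1+1\otimes\widehat{l}'$. The relations (\ref{quadrel}) then hold componentwise. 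I define $\Eval_j := \Eval_j \oplus \Eval_j$ and $\Incl := \Incl \oplus \Incl$, all linear. I then check Definition \ref{mdl1}:
\begin{enumerate}[label=(\roman*)]
\item the higher components vanish by construction;
\item the maps are quasi-isomorphisms because cohomology commutes with direct sums;
\item $\Eval_j \circ \Incl = \mathrm{id}$ holds componentwise;
\item the map $(\Eval_0)_1 \oplus (\Eval_1)_1$ is surjective, since after reordering summands it is the direct sum of two surjections.
\end{enumerate}

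Second, I define $H := h \oplus h' : C_1 \oplus C'_1 \rightarrow \mathfrak{C}_2 \oplus \mathfrak{C}'_2$. Its components are given by
\[
H_k\bigl((a_1,b_1),\dots,(a_k,b_k)\bigr) = \bigl(h_k(a_1,\dots,a_k),\, h'_k(b_1,\dots,b_k)\bigr)
\]
on pure inputs and vanish on mixed ones. Put differently, on coalgebras $\widehat{H} = \widehat{h}\otimes\widehat{h}'$ under $S(C_1\oplus C'_1)\cong SC_1\otimes SC'_1$. To verify (\ref{frel}), I use that a coalgebra map intertwining codifferentials corresponds to an $L_\infty[1]$-morphism. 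The tensor product of two such maps intertwines $\widehat{l}\otimes1+1\otimes\widehat{l}'$, with the Koszul sign handled by the degree-0 property of $\widehat{h}$.

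This tensor-product check is the only nonroutine step, and I expect it to be the main obstacle. Its subtle points are the unit issue and the curved case, where $h_0$ and $h'_0$ contribute the constant term $(h_0,h'_0)$. I would handle it by working with grouplike-twisted coalgebra maps, or more simply by restricting to the strict case implicitly used throughout this section. The same reasoning shows that $f\oplus f'$ is an $L_\infty[1]$-morphism.

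Finally, the composition formula respects the splitting, because $\Eval_j$ is linear and diagonal. Hence $\Eval_0\circ H = (\Eval_0\circ h)\oplus(\Eval_0\circ h') = f\oplus f'$, and likewise $\Eval_1\circ H = g\oplus g'$. So $H$ is the required homotopy in the sense of Definition \ref{defn:homotopy}.
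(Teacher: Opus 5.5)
Your proof is correct and follows the paper's own argument: the paper simply takes $h \oplus h' : C_1 \oplus C_1' \rightarrow \mathfrak{C}_2 \oplus \mathfrak{C}_2'$ with the componentwise $L_{\infty}[1]$-structures. You additionally carry out the routine checks it leaves implicit (model axioms, the morphism property via $\widehat{h}\otimes\widehat{h}'$, and the evaluations). Your reading of the conclusion as $f \oplus f' \sim g \oplus g'$ is also exactly what the paper's proof establishes.
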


\begin{proof}
Let $\mathfrak{C}_2$ and $\mathfrak{C}_2'$ be models of $\Delta^1 \times C_2$ and $\Delta^1 \times C_2',$ respectively. Let $h: C_1 \rightarrow \mathfrak{C}_2$ and $h' : C'_1 \rightarrow \mathfrak{C}'_2$ be the homotopies from $f$ to $g$ and from $f'$ to $g',$ respectively. For the desired homotopy, we can take $h \oplus h' : C_1 \oplus C_1' \rightarrow \mathfrak{C}_2 \oplus \mathfrak{C}_2',$ where $L_{\infty}[1]$-structures on both sides are given by (\ref{pluslinfty}).
\end{proof}

We introduce  homotopy equivalence for $L_\infty[1]$-algebras.

\begin{defn}
An $L_{\infty}[1]$-morphism $f : C \rightarrow C'$ is a \textit{homotopy equivalence} if there exists another $L_{\infty}[1]$-morphism $g : C' \rightarrow C$ such that $g \circ f$ and  $f \circ g$ are homotopic to $\mathrm{id}_C$ and $\mathrm{id}_{C'},$ respectively.
\end{defn}

The following is the main ingredient for our purpose.

\begin{thm}[Compare with Theorem 4.2.34 \cite{FOOO1}]\label{th}
Let $C_i, \ i =1,2$ be $L_{\infty}[1]$-algebras and $f : C_1 \rightarrow C_2$ an $L_{\infty}[1]$-morphism. For $\mathfrak{C}_i,$ models of $\Delta^1 \times C_i, \ i =1,2,$ respectively, there exists an $L_{\infty}[1]$-morphism $\mathfrak{F} : \mathfrak{C}_1 \rightarrow \mathfrak{C}_2$ that is \textit{over} $f$ and compatible with $\mathrm{Eval}_j, \ j=0,1$ and $\mathrm{Incl}$ in the following sense:
\begin{enumerate}[label = (\roman*)]
\item $\mathrm{Eval}_{s=j} \circ \mathfrak{F} = f \circ \mathrm{Eval}_{s=j}, \ j =0,1,$\\
\item $\mathrm{Incl} \circ f = \mathfrak{F} \circ \mathrm{Incl.}$
\end{enumerate}
\end{thm}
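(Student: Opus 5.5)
We construct $\mathfrak{F}$ by induction on the number $k$ of inputs, following the obstruction-theoretic argument of \cite[Theorem 4.2.34]{FOOO1}. We work over a field, with strict $L_\infty[1]$-algebras, so everything reduces to linear algebra on chain complexes.

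Write $E := (\Eval_0)_1\oplus(\Eval_1)_1 : \mathfrak{C}_2\to C_2\oplus C_2$. By condition (iv) of Definition \ref{mdl1}, $E$ is surjective. Set $K:=\ker E$. The idea is that $\mathfrak{C}_2$ is a path object, so $K$ is acyclic. To see this, note that $E\circ\Incl_1$ is the diagonal, and both $(\Eval_j)_1$ are quasi-isomorphisms. Hence on cohomology $H(E)$ is the diagonal embedding $H(C_2)\to H(C_2)^{\oplus 2}$ composed with the inverse of $H(\Incl_1)$, which is injective. The long exact sequence of $0\to K\to\mathfrak{C}_2\to C_2\oplus C_2\to0$ then only gives $H(K)\cong \mathrm{coker}$ of the diagonal, shifted by one, which is not zero.

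So $K$ itself is not acyclic. The correct relative object is the following: we only need to prescribe the evaluations of $\mathfrak{F}$ compatibly, not hit an arbitrary pair in $C_2\oplus C_2$. The targets $(f\circ\Eval_0, f\circ\Eval_1)$ come from one map, so the obstruction lives in a complex that is acyclic. Concretely, we choose the splitting differently. First we set $\mathfrak{F}_1$, and then correct higher terms, using a chain homotopy $P$ on $\mathfrak{C}_1$ contracting $\ker E_1$ relative to $\Incl$.

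Step 1: construct $\mathfrak{F}_1$. Choose a chain-level splitting $\mathfrak{C}_1 = \Incl_1(C_1)\oplus K_1$, where $K_1=\ker (\Eval_0)_1$ (a subcomplex with complement given by the image of $\Incl_1$ since $(\Eval_0)_1\Incl_1=\mathrm{id}$). Further split $K_1 = K_1^0\oplus K_1'$ with $K_1^0=\ker E_1$, and let $(\Eval_1)_1$ identify $K_1'$ linearly with a complement of zero in $C_1$. Precisely, $(\Eval_1)_1 : K_1\to C_1$ is surjective by (iv). Define $\mathfrak{F}_1$ on $\Incl_1(C_1)$ by $\Incl_1 f_1$. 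Define it on $K_1$ by solving a lifting problem: we want a chain map $K_1\to K_2:=\ker(\Eval_0)_1\subset\mathfrak{C}_2$ lifting $f_1\circ(\Eval_1)_1$ through the surjection $(\Eval_1)_1:K_2\to C_2$. Note $K_1, K_2$ are acyclic, since $(\Eval_0)_1$ is a quasi-isomorphism with section. Hence $\ker((\Eval_1)_1|_{K_2})$ has cohomology $H(C_2)$ shifted, and the standard lifting for a surjection of complexes over a field with acyclic source works degree by degree. $K_1$ is contractible, so pick a contraction and lift generators of a basis of the form $\{b, l_1 b\}$ freely.

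Step 2: construct the higher terms. Assume $\mathfrak{F}_1,\dots,\mathfrak{F}_{k-1}$ are constructed so that relation (\ref{frel}) holds up to arity $k-1$, together with $(\Eval_j)_1\mathfrak{F}_m = f_m\circ(\Eval_j)_1^{\odot m}$ and $\mathfrak{F}_m\circ\Incl_1^{\odot m}=\Incl_1 f_m$. These compatibilities are linear because $\Eval$ and $\Incl$ have no higher components. The failure of (\ref{frel}) at arity $k$ is a map $O_k : S^k\mathfrak{C}_1\to\mathfrak{C}_2$ that is a cocycle in the Hom complex. The key facts are as follows:
\begin{enumerate}[label=(\alph*)]
\item $E\circ O_k$ equals the evaluation of $f$'s own (vanishing) obstruction, so we look for $\mathfrak{F}_k$ with prescribed $E\circ\mathfrak{F}_k = (f_k\circ(\Eval_0)_1^{\odot k}, f_k\circ(\Eval_1)_1^{\odot k})$.
\item On $S^k\Incl_1(C_1)$ it is prescribed as $\Incl_1 f_k$.
\end{enumerate}
First pick any linear lift $\mathfrak{F}'_k$ with the right $E$-image and $\Incl$-restriction; this is possible by surjectivity of $E$ and compatibility on the overlap, since $E\Incl_1 f_k$ is consistent. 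The remaining error takes values in $\ker E$ and vanishes on $S^k\Incl_1(C_1)$. It is a cocycle in $\mathrm{Hom}(S^k\mathfrak{C}_1 / S^k \Incl_1 C_1,\ \ker E)$. Because $\Incl_1$ is a quasi-isomorphism, the quotient $S^k\mathfrak{C}_1/S^k\Incl_1C_1$ is acyclic over a field (in characteristic zero, symmetric powers preserve quasi-isomorphisms). Hence this Hom complex is acyclic, and the error is a coboundary; we adjust $\mathfrak{F}'_k$ by the primitive to obtain $\mathfrak{F}_k$.

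The main obstacle is this last acyclicity and the bookkeeping that the obstruction is indeed a cocycle with the right boundary values. We expect to verify it using the coalgebra formalism of Lemma \ref{aglem}, where $\widehat{\mathfrak{F}}\widehat{l}-\widehat{l}\widehat{\mathfrak{F}}$ is a coderivation along $\widehat{\mathfrak{F}}$ whose arity-$k$ part is closed.
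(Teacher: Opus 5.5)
Your proof is correct in characteristic zero, which the paper also tacitly assumes. It reorganizes the paper's argument rather than reproducing it.

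\textbf{The paper's route.} At each arity $K$ the paper proceeds in two stages.
\begin{enumerate}
\item It first solves the obstruction equation compatibly with $\Incl$ alone (Lemma \ref{lemivp}(iii)--(iv), using that restriction along $\widehat{\Incl}$ is a quasi-isomorphism). This produces an intermediate $L_K[1]$-morphism $\mathfrak{F}^{(K)\prime}$.
\item It then measures the failure of $\Eval$-compatibility, a cocycle $(\Err_K^{(0)},\Err_K^{(1)})$ with values in $C_2\oplus C_2$ that vanishes on $\widehat{\Incl}$. It takes a primitive that also vanishes on $\widehat{\Incl}$ via Lemma \ref{lem:fooo-lemma}, lifts it through the surjection $E=(\Eval_0)_1\oplus(\Eval_1)_1$, and subtracts its coboundary.
\end{enumerate}

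\textbf{Your route.} You impose both boundary conditions first, on an arbitrary linear lift $\mathfrak{F}'_k$; this uses only surjectivity of $E$ and splitness of $\Incl_1^{\odot k}$. You then observe that the defect of the $L_\infty[1]$-relation is a cocycle in $\mathrm{Hom}(S^k\mathfrak{C}_1/S^k\Incl_1 C_1,\ker E)$. This complex is acyclic because its \emph{source} is contractible. So the non-acyclicity of $\ker E$, which you correctly noticed, does no harm.

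\textbf{Comparison.} Your version needs a single correction per arity, with no intermediate $\mathfrak{F}^{(K)\prime}$ and no separate lifting step. Your Step 1 is then just the case $k=1$, since $\mathfrak{C}_1/\Incl_1 C_1\cong\ker(\Eval_0)_1$ is contractible, so the ad hoc basis-lifting construction is unnecessary, though correct. The paper's two-stage version follows FOOO's argument closely and isolates the general Lemma \ref{lem:fooo-lemma}. The underlying inputs are the same in both: surjectivity of $E$, and $\Incl_1^{\odot k}$ being a split injective quasi-isomorphism.

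\textbf{One sentence to delete.} The remark about a chain homotopy $P$ ``contracting $\ker E_1$ relative to $\Incl$'' is false in general. Exactly as for $\ker E$, the complex $\ker E_1$ has cohomology $H(C_1)$ shifted by one. You never use it, so simply remove it.
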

The statement of the theorem can be visualized into the following
diagram
\[
\begin{tikzcd}
C_1 \arrow[swap]{d}{f} \arrow{r}{\Incl} &  \mathfrak{C}_1 \arrow[swap]{d}{\mathfrak{F}} \arrow{rr}{\Eval_{s=0} \oplus \Eval_{s=1}} & & C_1 \oplus C_1 \arrow{d}{f \oplus f} \\
C_2 \arrow{r}{\Incl} & \mathfrak{C}_2  \arrow{rr}{\Eval_{s=0} \oplus \Eval_{s=1}} & & C_2 \oplus C_2.
\end{tikzcd}
\]

The proof of the theorem will be provided in Appendix B. Meanwhile, we state its immediate consequence.

\begin{prop}\label{prop:heer}
Homotopy equivalences define an equivalence relation.
\end{prop}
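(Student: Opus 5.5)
Reflexivity and symmetry are formal, and the real content is transitivity, which I would obtain from Theorem \ref{th}.

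For reflexivity, $\mathrm{id}_C$ is its own homotopy inverse, because $\mathrm{id}_C\circ\mathrm{id}_C=\mathrm{id}_C$ and homotopy is reflexive. For symmetry, the definition of a homotopy equivalence $f$ with inverse $g$ is symmetric in $f$ and $g$, so $g$ is a homotopy equivalence with inverse $f$. (Here ``equivalence relation'' is read on $L_\infty[1]$-algebras: $C\sim C'$ when a homotopy equivalence $C\to C'$ exists.)

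For transitivity, let $f:C_1\to C_2$ and $f':C_2\to C_3$ be homotopy equivalences with inverses $g$ and $g'$. I will show that $g\circ g'$ is a homotopy inverse of $f'\circ f$. By associativity of composition, $(g\circ g')\circ(f'\circ f)=g\circ(g'\circ f')\circ f$. The key lemma is that homotopy is compatible with composition on both sides:
\begin{itemize}
\item[(a)] if $h_0\simeq h_1:C\to C'$ and $\phi:C'\to C''$, then $\phi\circ h_0\simeq\phi\circ h_1$;
\item[(b)] if $\psi:B\to C$, then $h_0\circ\psi\simeq h_1\circ\psi$.
\end{itemize}

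Part (b) is immediate. If $H:C\to\mathfrak{C}'$ is a homotopy with $\Eval_j\circ H=h_j$, then $H\circ\psi$ is a homotopy from $h_0\circ\psi$ to $h_1\circ\psi$, using associativity of composition of $L_\infty[1]$-morphisms.

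For part (a), choose any model $\mathfrak{C}''$ of $\Delta^1\times C''$; one exists, for instance the standard de Rham type model $C''\otimes\Omega(\Delta^1)$, or any model as in Definition \ref{mdl1}. Apply Theorem \ref{th} to $\phi$ to get $\mathfrak{F}:\mathfrak{C}'\to\mathfrak{C}''$ with $\Eval_j\circ\mathfrak{F}=\phi\circ\Eval_j$. Then $\mathfrak{F}\circ H$ is a homotopy, since
\[
\Eval_j\circ\mathfrak{F}\circ H=\phi\circ\Eval_j\circ H=\phi\circ h_j .
\]

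With the lemma, the first composite is handled as follows. Apply (a) and (b) to $g'\circ f'\simeq\mathrm{id}_{C_2}$ to get $g\circ(g'\circ f')\circ f\simeq g\circ f$, and we know $g\circ f\simeq\mathrm{id}_{C_1}$. The previously proved transitivity of homotopy then gives $(g\circ g')\circ(f'\circ f)\simeq\mathrm{id}_{C_1}$. The composite in the other order is treated symmetrically.

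The main obstacle is part (a), specifically the existence of a model of $\Delta^1\times C''$ for an arbitrary target so that Theorem \ref{th} can be applied. If the paper provides a canonical model (e.g.\ polynomial forms on $[0,1]$ tensored with $C''$, with $\Eval_j$ given by evaluation at the endpoints and $\Incl$ given by constants), I would cite it. Otherwise I would construct it: check that the tensored $L_\infty[1]$-structure satisfies conditions (i)--(iv) of Definition \ref{mdl1}, with surjectivity coming from the form $t$ and quasi-isomorphism coming from the Poincaré lemma over a field.
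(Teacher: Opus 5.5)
Your proposal is correct and follows the paper's route: the paper derives transitivity from Lemma \ref{eerel}, that is, compatibility of $\sim$ with composition, proved by composing a homotopy with the morphism $\mathfrak{F}'$ supplied by Theorem \ref{th}, exactly as in your part (a). Your split into post-composition (a), the trivial pre-composition (b), and transitivity of $\sim$ is actually slightly more careful than the paper's one-line proof of that lemma, since $\mathfrak{F}'\circ\mathfrak{F}$ alone only yields $f'\circ f\sim f'\circ g$; the model you need for (a) is available as the case $n=1$ of Example \ref{hhex}.
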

\begin{proof}
Only transitivity is non-trivial, and it follows from the following lemma.

\begin{lem}\label{eerel}
Consider a diagram of $L_{\infty}[1]$-algebras and $L_{\infty}[1]$-morphisms.
\begin{equation}\nonumber
C_0 \xrightarrow{f,g} C_1 \xrightarrow{f',g'} C_2.
\end{equation}
If $f \sim g$ and $f' \sim g',$ then we have $f' \circ f \sim g' \circ g.$
\end{lem}
\begin{proof}
By Theorem \ref{th}, we have an $L_{\infty}[1]$-morphism $\mathfrak{F}' : \mathfrak{C}_2 \rightarrow \mathfrak{C}_3$ between a model of $\Delta^1 \times C_2$ and a model of $\Delta^1 \times C_3.$ For a homotopy $\mathfrak{F} : C_1 \rightarrow \mathfrak{C}_2$ between $f$ and $g,$ $\mathfrak{F}' \circ \mathfrak{F}$ is the desired homotopy from $f' \circ f$ to $g' \circ g.$
\end{proof}
The proof of Proposition \ref{prop:heer} follows easily from Lemma \ref{eerel} and Definition \ref{defn:homotopy}.
\end{proof}

\subsection{Whitehead theorem}
In this subsection, we prove an $L_{\infty}[1]$-version of the Whitehead theorem (over a field) that plays a crucial role in our subsequent discussions regarding quasi-isomorphisms. (See \cite[Subsection 4.5]{FOOO1} for its $A_{\infty}$-version.)

We begin with the definitions of $L_{K}[1]$-algebras and $L_{K}[1]$-morphisms.
\begin{defn}
\textit{$L_{K}[1]$-algebras and $L_{K}[1]$-morphisms} are defined by the families $\{l_k\}_{k \leq K}$ and $\{f_k\}_{k \leq K}$ with the same conditions (\ref{lrel}) and (\ref{frel}), respectively.
\end{defn}
Let $C_i, \ i =1,2$ be $L_{K+1}[1]$-algebras and $f : C_1 \rightarrow C_2$ an $L_{K}[1]$-morphism. We consider the space $\mathrm{Hom}\left(S^{K+1}C_1, C_2\right)$ with the Hochschild differential $\delta_1$ given by $\delta_1(\cdot) := l_1 \circ (\cdot) + (-1)^{\text{deg}(\cdot) +1}(\cdot) \circ \widehat{l}_1.$ Here $\widehat{l}_1 : SC_1 \rightarrow SC_1$ is the coderivation induced by $l_1$ on $SC_1.$

We denote
\begin{equation}\label{skcn}
S^{\leq K+1} C := \bigoplus\limits_{i=1}^{K+1} S^i C,
\end{equation}
and note that $f$ induces
\begin{equation}\nonumber
\widehat{f}_{\leq K}:= \sum\limits_{i = 1}^{K}\widehat{f}_i \in \mathrm{Hom}\left(S^{\leq K+1}C_1, S^{\leq K+1}C_2\right).
\end{equation}
Also we denote
\[
\widehat{l}_{\leq K+1} := \sum\limits_{i = 1}^{K+1}\widehat{l}_i.
\]

We define the $(K+1)$\textit{-th obstruction class of} $f$ to the following \textit{degree 1} element:
\begin{equation}\nonumber
O_{K+1}(f) := \widehat{l}_{\leq K+1} \circ \widehat{f}_{\leq K} - \widehat{f}_{\leq K} \circ \widehat{l}_{\leq K+1} \in \mathrm{Hom}\left(S^{\leq K+1}C_1,  S^{\leq K+1}C_2\right).
\end{equation}

\begin{lem}\label{fasgg}
In the above situation, $O_{K+1}(f)$ satisfies the following properties:
\begin{enumerate}[label = (\roman*)]
\item $O_{K+1}(f)|_{S^{\leq K}C_1} = 0.$
\item $\mathrm{Im} \left(O_{K+1}(f)\right) \subset C_2.$
\item $\delta_1 \big( O_{K+1}(f) \big) = 0.$
\item $[O_{K+1}(f)] = 0$ if and only if there exists an $L_{K+1}[1]$-morphism that extends $f.$
\item For $L_{K+1}[1]$-morphisms $g: C_1' \rightarrow C_1$ and $g' : C_2 \rightarrow C_2',$ we have 
\[
[O_{K+1}(g' \circ f \circ g)] = (g_1')_* \circ [O_{K+1}(f)] \circ (S^{\leq K+1}g_1)_*,
\]
 where $S^{\leq K+1}g_1 : S^{\leq K+1}C_1' \rightarrow S^{\leq K+1}C_1$ is induced from $g_1$ and $(S^{\leq K+1}g_1)_*$ is the map induced on cohomology.
\item If $f$ is homotopic to $f',$ then we have $[O_{K+1}(f)] = [O_{K+1}(f')].$
\end{enumerate}
\end{lem}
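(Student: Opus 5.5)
We will prove the six properties by working in the coalgebra language of Lemma \ref{aglem}, with the truncations $\widehat{l}_{\leq K+1}$ and $\widehat{f}_{\leq K}$ acting on $S^{\leq K+1}$. All six items reduce to bookkeeping on word length together with the relations (\ref{quadrel}) and (\ref{frel}).

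\textbf{Items (i) and (ii).} These are length arguments.

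On $S^{\leq K}C_1$, all components that appear are those of the genuine $L_K[1]$-data. The difference $\widehat{l}\circ\widehat{f}-\widehat{f}\circ\widehat{l}$ restricted to words of length $\leq K$ is exactly the relation (\ref{frel}) for $k\leq K$, applied inside the coalgebra morphism $\widehat f$. It therefore vanishes, which gives (i).

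For (ii), let $\pi_{\geq 2}$ denote the projection onto $S^{\geq 2}C_2$. Both $\widehat{f}_{\leq K}$ and $\widehat{l}$ are compatible with $\Delta$: the first is a coalgebra morphism up to length $K+1$, the second a coderivation. Hence $O_{K+1}(f)$ is an $\widehat f$-coderivation. Its components of output length $\geq 2$ are built from $\Delta$-pieces of length $\leq K$, and these vanish by (i). So only the $C_2$-component survives.

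\textbf{Item (iii).} We use $\widehat l\circ\widehat l=0$ on both sides:
\[
\widehat l\, O-(-1)O\,\widehat l=\widehat l\,\widehat l\,\widehat f-\widehat f\,\widehat l\,\widehat l=0 .
\]
Restricting to $S^{K+1}C_1$, using (i) and (ii), only $l_1$ and $\widehat l_1$ hit $O_{K+1}(f)$. This is precisely $\delta_1 O_{K+1}(f)=0$.

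\textbf{Item (iv).} An extension is a choice of $f_{K+1}\in\mathrm{Hom}(S^{K+1}C_1,C_2)$. The $(K+1)$-st relation (\ref{frel}) reads $O_{K+1}(f)=\pm\delta_1 f_{K+1}$, since $f_{K+1}$ enters only linearly through $l_1\circ f_{K+1}$ and $f_{K+1}\circ\widehat l_1$. So an extension exists if and only if the class vanishes.

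\textbf{Item (v).} Expand the composition formula. Modulo terms involving the $(K+1)$-st components of $g$ and $g'$, we get
\[
O_{K+1}(g'\circ f\circ g)=g'_1\circ O_{K+1}(f)\circ S^{K+1}g_1 .
\]
The terms containing $g_{K+1}$ or $g'_{K+1}$ assemble into $\delta_1$-exact terms, because $g$ and $g'$ satisfy their own $(K+1)$-st relations. This gives the identity on cohomology.

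\textbf{Item (vi).} This is where the real work lies. Let $h:C_1\to\mathfrak C_2$ be a homotopy with $f=\Eval_0\circ h$ and $f'=\Eval_1\circ h$, truncated to an $L_K[1]$-morphism. Since $\Eval_j$ are strict morphisms, (v) gives
\[
[O_{K+1}(f)]=(\Eval_0)_{1*}[O_{K+1}(h)],\qquad [O_{K+1}(f')]=(\Eval_1)_{1*}[O_{K+1}(h)].
\]
By Definition \ref{mdl1}(ii),(iii), $(\Eval_0)_1$ and $(\Eval_1)_1$ are both left inverses of the quasi-isomorphism $\Incl_1$. Hence they induce the same map on cohomology, and therefore on the cohomology of $\mathrm{Hom}(S^{K+1}C_1,-)$. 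It follows that the two classes agree.

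The main obstacle is (v): the sign and combinatorial bookkeeping needed to show that the correction terms are exactly $\delta_1$-coboundaries. I would handle this in the coalgebra picture, writing $\widehat{g'fg}=\widehat{g'}\,\widehat f\,\widehat g$ and commuting $\widehat l$ through one factor at a time.
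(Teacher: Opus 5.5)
Your proposal is correct and follows the paper's proof. Items (i)--(iv) use the same length and relation bookkeeping that the paper cites, and (vi) is exactly the paper's chain $[O_{K+1}(f)]=(\Eval_{0})_{1*}[O_{K+1}(h)]=(\Eval_{1})_{1*}[O_{K+1}(h)]=[O_{K+1}(f')]$, obtained from (v) and the fact that both $(\Eval_j)_1$ are left inverses of the quasi-isomorphism $\Incl_1$.

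For (v), which the paper only calls straightforward, your coalgebra route works, since $\widehat{l}$ commutes with $\widehat{g}$ and $\widehat{g'}$ on $S^{\leq K+1}$. Your description of the correction terms is off, though. $O_{K+1}(g'\circ f\circ g)$ contains neither $g_{K+1}$ nor $g'_{K+1}$. It differs from $g'_1\circ O_{K+1}(f)\circ S^{K+1}g_1$ by $\delta_1$ of the entire $(K+1)$-st component of $\widehat{g'}\,\widehat{f}_{\leq K}\,\widehat{g}$, and that component also contains mixed lower-order terms.
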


\begin{proof}
(i) amounts to saying that $f$ is an $L_{K}[1]$-morphism. (ii) follows immediately from the assumption that $f$ is an $L_{K}[1]$-morphism and the definition of $O_{K+1}(f).$ For (iii), we note that we have
\[
\delta_1 \big( O_{K+1}(f) \big)  = l_1 \circ O_{K+1}(f) - O_{K+1}(f) \circ \widehat{l}_1 = 0,
\]
which essentially follows from the fact $f$ is an $L_{K}[1]$-morphism and a straightforward computation. For (iv), observe that $[O_{K+1}(f)]$ vanishes if and only if there exists $f_{K+1}$ such that  $-\delta_1(f_{K+1}) = O_{K+1}(f),$ which is precisely the relation that $f_{K+1}$ together with $f$ must satisfy to be an $L_{K+1}[1]$-morphism. (v) can be verified straightforwardly. For (vi), let $h$ be an $L_{K}[1]$-homotopy (arising from a model of $\Delta^1 \times C_2$) between $f$ and $f'.$ Then we have
\begin{equation}\nonumber
\begin{split}
[O_{K+1}(f)] &= [O_{K+1}(\Eval|_{s=0} \circ h)] \overset{(1)}{=} \left((\Eval|_{s=0})_1\right)_*[O_{K+1}(h)]\\ 
& \overset{(2)}{=} \left((\Eval|_{s=1})_1\right)_*[O_{K+1}(h)] \overset{(3)}{=} [O_{K+1}(\Eval|_{s=1} \circ h)] = [O_{K+1}(f')],
\end{split}
\end{equation}
where the equalities $(1)$ and $(3)$ follow from (v). The equality $(2)$ follows from the axiom (iii) of Definition \ref{mdl1}, stating that $\mathrm{Eval}_{s=j} \circ \mathrm{Incl}=\mathrm{id}_{C_2}, \ j=0,1$ and that they are quasi-isomorphisms.
\end{proof}

\begin{cor}\cite[Corollary 4.5.5]{FOOO1}
Let $f : C_1 \rightarrow C_2$ be an $L_{K+1}[1]$-morphism, $g: C_1 \rightarrow C_2$ an $L_{K}[1]$-morphism and $h : C_1 \rightarrow \mathfrak{C}_2$ an $L_{K}[1]$-homotopy from $f$ to $g.$ Then $g$ extends to an $L_{K+1}[1]$-morphism $g',$ and $h$ extends to an $L_{K+1}[1]$-homotopy from $f$ to $g.$
\end{cor}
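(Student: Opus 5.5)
Two things have to be produced: an extension of $g$ to an $L_{K+1}[1]$-morphism $g'$, and an extension of $h$ to an $L_{K+1}[1]$-homotopy whose evaluations are $f$ and $g'$. The approach is obstruction theory through Lemma \ref{fasgg}, working in the complex $\mathrm{Hom}(S^{K+1}C_1, -)$ with differential $\delta_1$. Lemma \ref{fasgg}(vi) applied to the $L_K[1]$-homotopy $h$ gives $[O_{K+1}(g)] = [O_{K+1}(f)]$. Since $f$ is already an $L_{K+1}[1]$-morphism, Lemma \ref{fasgg}(iv) (the trivial direction) gives $[O_{K+1}(f)]=0$. Hence $[O_{K+1}(g)]=0$, and Lemma \ref{fasgg}(iv) provides some extension $g'_{K+1}$ of $g$. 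This extension is not unique: it can be changed by any $\delta_1$-cocycle $\phi \in \mathrm{Hom}(S^{K+1}C_1, C_2)$, and I will use this freedom below.

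Next I extend $h$. Since $\mathrm{Eval}_j$ and $\mathrm{Incl}$ have no components of length $\ge 2$, Lemma \ref{fasgg}(v) gives
\[
(\mathrm{Eval}_j)_{1*}[O_{K+1}(h)] = [O_{K+1}(\mathrm{Eval}_j\circ h)],
\]
and this equals $[O_{K+1}(f)]=0$ for $j=0$ and $[O_{K+1}(g)]=0$ for $j=1$. As $(\mathrm{Eval}_j)_1$ is a quasi-isomorphism, it induces an isomorphism on the cohomology of the Hom complexes (we are over a field), so $[O_{K+1}(h)]=0$. We therefore get some $h_{K+1}$ with $-\delta_1 h_{K+1} = O_{K+1}(h)$.

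It remains to make the evaluations right. Consider $e_0 := (\mathrm{Eval}_0)_1\circ h_{K+1} - f_{K+1}$ and $e_1 := (\mathrm{Eval}_1)_1\circ h_{K+1} - g'_{K+1}$. Because $O_{K+1}(\mathrm{Eval}_0\circ h) = (\mathrm{Eval}_0)_1\circ O_{K+1}(h)$ equals $O_{K+1}(f)$, the difference $e_0$ is a $\delta_1$-cocycle; the same argument shows $e_1$ is a cocycle. I correct in two steps.

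First, I arrange $[e_0]=0$. The class $[(\mathrm{Eval}_0)_{1}]_*$ is an isomorphism, so I can change $h_{K+1}$ by a cocycle $\psi$ with $(\mathrm{Eval}_0)_{1*}[\psi] = -[e_0]$. Concretely, $\psi = -\mathrm{Incl}_1\circ e_0$ works, using $(\mathrm{Eval}_0)_1\circ\mathrm{Incl}_1=\mathrm{id}$. This changes $e_1$ by a cocycle as well, which I absorb by redefining $g'_{K+1}$ as $g'_{K+1}+e_1$. That redefinition is still an extension of $g$, since $e_1$ is a cocycle, and after it $e_1=0$ exactly.

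Second, and this is the main obstacle, I make $e_0=0$ on the nose while keeping $e_1=0$. By now $e_0=\delta_1\beta$ is exact. I use the surjectivity axiom (iv) of Definition \ref{mdl1}: lift $(\beta,0)$ along $(\mathrm{Eval}_0)_1\oplus(\mathrm{Eval}_1)_1$ to a map $\tilde\beta:S^{K+1}C_1\to\mathfrak{C}_2$, which is possible because we are over a field and may lift on a basis. Then replace $h_{K+1}$ by $h_{K+1}-\delta_1\tilde\beta$. This changes the obstruction equation only by an exact term, which $\delta_1$ kills, so $-\delta_1 h_{K+1}=O_{K+1}(h)$ still holds. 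It kills $e_0$ and leaves $e_1$ unchanged. The delicate points are the sign convention in $\delta_1$ and checking that $\delta_1$ commutes with postcomposition by the strict chain maps $(\mathrm{Eval}_j)_1$, which holds because they are chain maps. With these corrections, $g'$ is an $L_{K+1}[1]$-morphism and $h$ extended by $h_{K+1}$ is an $L_{K+1}[1]$-homotopy from $f$ to $g'$.
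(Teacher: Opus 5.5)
Your argument is correct, but it is organized differently from the paper's, and two of its steps end up doing no work.

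\textbf{The paper's route.} It starts from the explicit guess $h'_{K+1}=\mathrm{Incl}_1\circ f_{K+1}$, which already satisfies $(\mathrm{Eval}_0)_1\circ h'_{K+1}=f_{K+1}$. It then observes that $O_{K+1}(h)+\delta_1(h'_{K+1})$ is a cocycle killed by $(\mathrm{Eval}_0)_1$. Since $(\mathrm{Eval}_0)_1$ is a quasi-isomorphism split by $\mathrm{Incl}_1$, its kernel is acyclic, so this cocycle equals $\delta_1(\Delta h_{K+1})$ with $\Delta h_{K+1}$ valued in $\ker(\mathrm{Eval}_0)_1$. The paper sets $h_{K+1}=h'_{K+1}-\Delta h_{K+1}$ and simply defines $g':=\mathrm{Eval}_1\circ\overline{h}$.

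\textbf{Your route.} You go in the opposite order. You first solve the obstruction equation for $h_{K+1}$, using only that $(\mathrm{Eval}_0)_{1*}$ is injective on the cohomology of the Hom complexes. You then repair the boundary value by adding the cocycle $-\mathrm{Incl}_1\circ e_0$.

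\textbf{The two redundant steps.}
\begin{enumerate}
\item Because $(\mathrm{Eval}_0)_1\circ\mathrm{Incl}_1=\mathrm{id}$, the repair by $-\mathrm{Incl}_1\circ e_0$ already makes $e_0=0$ identically, not merely $[e_0]=0$. So your ``main obstacle'' step is vacuous (take $\beta=0$), and you never need the surjectivity axiom (iv).
\item After your redefinition, $g'_{K+1}$ is exactly $(\mathrm{Eval}_1)_1\circ h_{K+1}$. The initial extension of $g$ via Lemma~\ref{fasgg}(vi) and (iv) is therefore discarded. Defining $g'$ as $\mathrm{Eval}_1\circ\overline{h}$ from the start works directly, since
\[
-\delta_1\big((\mathrm{Eval}_1)_1\circ h_{K+1}\big)=(\mathrm{Eval}_1)_1\circ O_{K+1}(h)=O_{K+1}(g).
\]
\end{enumerate}

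The two routes rest on equivalent inputs: a cohomology isomorphism in your case versus acyclicity of $\ker(\mathrm{Eval}_0)_1$ in the paper's, each together with the section $\mathrm{Incl}_1$. The paper's ordering is simply more economical.
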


\begin{proof}
We have 
\[
(\mathrm{Eval}_{s=0})_{1*}[O_{K+1}(h)] = [O_{K+1}(\Eval_{s=0} \circ h)] = [O_{K+1}(f)], \ j = 0,1
\]
by (v) of the previous lemma. Then from the fact that $f$ is an $L_{K+1}[1]$-morphism and that $\mathrm{Eval}_{s=j}$ is a quasi-isomorphism, we obtain $[O_{K+1}(h)] = 0,$ that is, $h$ extends to an $L_{K+1}[1]$-morphism.

Now we denote $h'_{K+1} := \Incl \circ f_{K+1}$ and observe that
\begin{equation}\nonumber
\begin{split}
(\Eval_{s=0})_1 \big( O_{K+1}(h) &+ \delta_1 (h'_{K+1}) \big) = (\Eval_{s=0})_1 \left( O_{K+1}(h)\right) +  (\Eval_{s=0})_1 \left( \delta_1 (h'_{K+1})\right)\\ 
& \overset{*}{=} \left( O_{K+1}((\Eval_{s=0}) \circ h)\right) + \left((\Eval_{s=0})_1 \circ \Incl_1\right) \delta_1 (f_{K+1}) \\ 
& = O_{K+1}(f) +  \delta_1 (f_{K+1}) = 0,
\end{split}
\end{equation}
as $f$ is an $L_{K+1}[1]$-morphism. For $*,$ we use (i) and (iii) of Definition \ref{mdl1}. Here we regard $(\Eval_{s=0})_j$'s as maps from $\mathrm{Hom}\left(S^{\leq K+1}C_1, \mathfrak{C}_2\right)$ to $\mathrm{Hom}\left(S^{\leq K+1}C_1, C_2\right),$ and similarly for $\Incl.$

Since $(\mathrm{Eval}_{s=0})_1$ is a quasi-isomorphism, we then have
\begin{equation}\nonumber
O_{K+1}(h) + \delta_1 (h'_{K+1}) = \delta_1(\Delta h_{K+1})
\end{equation}
for some $\Delta h_{K+1} \in \mathrm{ker}(\Eval_{s=0})_1.$ Further denoting $h_{K+1}:=h'_{K+1} - \Delta h_{K+1},$ we verify that $h_1, \cdots, h_{K+1}$ define an $L_{K+1}[1]$-morphism, $\overline{h}.$ Moreover, $g':= \Eval_{s=1} \circ \overline{h}$ is an $L_{K+1}[1]$-morphism that extends $g$ by the induction hypothesis. We conclude that $\overline{h}$ is the desired $L_{K+1}[1]$-homotopy from $f$ to $g'.$
\end{proof}

\begin{prop}\label{prevprop}\cite[Proposition 4.5.6]{FOOO1}
Let $f : C_1 \rightarrow C_2$ be an $L_{\infty}[1]$-quasi-isomorphism, $g^{(K)}: C_2 \rightarrow C_1$ an $L_K[1]$-morphism, and $h^{(K)} : C_1 \rightarrow \mathfrak{C}_1$ an $L_K[1]$-homotopy from identity $\mathrm{id}_{C_1}$ to $g^{(K)} \circ f.$ Then $g^{(K)}$ extends to an $L_{K+1}[1]$-morphism $g^{(K+1)},$ and $h^{(K)}$ extends to an $L_{K+1}[1]$-homotopy $h^{(K+1)}$ from $\mathrm{id}_{C_1}$ to $g^{(K+1)} \circ f.$
\end{prop}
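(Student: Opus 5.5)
We follow the obstruction-theoretic argument of \cite[Proposition 4.5.6]{FOOO1}, using Lemma \ref{fasgg} and the preceding Corollary. The plan has two stages: first extend $g^{(K)}$ to an $L_{K+1}[1]$-morphism $g^{(K+1)}$, and then extend $h^{(K)}$ to a homotopy ending at $g^{(K+1)}\circ f$.

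\textbf{Stage 1: extending $g^{(K)}$.} We have $O_{K+1}(g^{(K)})\in \mathrm{Hom}(S^{K+1}C_2, C_1)$, and it is a $\delta_1$-cocycle by Lemma \ref{fasgg}(iii). Its class can be computed as follows:
\begin{enumerate}[label = (\alph*)]
\item Since $f$ is an $L_\infty[1]$-morphism, in particular an $L_{K+1}[1]$-morphism, Lemma \ref{fasgg}(v) with $g'=\mathrm{id}$ gives
\[
[O_{K+1}(g^{(K)}\circ f)] = [O_{K+1}(g^{(K)})]\circ (S^{\le K+1}f_1)_*.
\]
\item By Lemma \ref{fasgg}(vi) applied to the $L_K[1]$-homotopy $h^{(K)}$, this class equals $[O_{K+1}(\mathrm{id}_{C_1})]=0$, because $\mathrm{id}_{C_1}$ is a genuine $L_\infty[1]$-morphism.
\item Over a field, $f_1$ is a quasi-isomorphism, so $S^{K+1}f_1$ is one as well. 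Hence $(S^{\le K+1}f_1)^*$ is an isomorphism on the cohomology of $\mathrm{Hom}(S^{K+1}C_\bullet,C_1)$ with respect to $\delta_1$. We can check this by choosing a homotopy inverse of $f_1$ at the chain level, which exists over a field, and noting that symmetric powers preserve chain homotopy equivalences in characteristic zero. If the characteristic is not zero, one instead uses a Künneth-type argument on the coinvariants; we assume the standing conventions make this available.
\item Therefore $[O_{K+1}(g^{(K)})]=0$, and Lemma \ref{fasgg}(iv) produces some $g_{K+1}$ extending $g^{(K)}$ to an $L_{K+1}[1]$-morphism $\tilde g$.
\end{enumerate}

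\textbf{Stage 2: extending the homotopy.} The choice of $g_{K+1}$ is free up to adding a $\delta_1$-cocycle, and we will use this freedom. We first run the argument of the preceding Corollary with the roles arranged so that the fixed end is $\mathrm{id}_{C_1}$, which is $L_{K+1}[1]$.
\begin{enumerate}[label = (\alph*)]
\item Since $(\Eval_{s=0})_{1*}[O_{K+1}(h^{(K)})]=[O_{K+1}(\mathrm{id})]=0$ and $\Eval_{s=0}$ is a quasi-isomorphism, $[O_{K+1}(h^{(K)})]=0$.
\item Following the Corollary, we choose $h_{K+1}=\Incl\circ(\mathrm{id})_{K+1}-\Delta h_{K+1}$ with $\Delta h_{K+1}\in\ker(\Eval_{s=0})_1$. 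This gives an $L_{K+1}[1]$-homotopy $\overline h$ from $\mathrm{id}$ to $g'':=\Eval_{s=1}\circ\overline h$, an $L_{K+1}[1]$-morphism extending $g^{(K)}\circ f$.
\item What remains is to make $g''$ equal to $\tilde g\circ f$ for a suitable correction of $g_{K+1}$. The two differ only in their $(K+1)$-st components. Since both are $L_{K+1}[1]$-extensions of the same $L_K[1]$-morphism, the difference $D := (\tilde g\circ f)_{K+1}-g''_{K+1}$ is a $\delta_1$-cocycle in $\mathrm{Hom}(S^{K+1}C_1,C_1)$.
\item By the isomorphism in Stage 1(c), we can write $D = c\circ S^{K+1}f_1+\delta_1(e)$ with $c$ a cocycle on $S^{K+1}C_2$. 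Replacing $g_{K+1}$ by $g_{K+1}-c$ kills the class of $D$. The remaining exact part $\delta_1(e)$ is absorbed into the homotopy: we modify $h_{K+1}$ by an element $\tilde e\in\mathrm{Hom}(S^{K+1}C_1,\mathfrak C_1)$ with $(\Eval_{s=0})_1\tilde e=0$ and $(\Eval_{s=1})_1\tilde e=e$. Such an $\tilde e$ exists by surjectivity (iv) of Definition \ref{mdl1} together with exactness of $\ker(\Eval_0)_1\to C_1$. Adding $\delta_1(\tilde e)$ preserves the $L_{K+1}[1]$-relation and shifts the $s=1$ end by $\delta_1(e)$.
\end{enumerate}
This yields $h^{(K+1)}$ from $\mathrm{id}$ to $g^{(K+1)}\circ f$.

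\textbf{Main obstacle.} We expect the hardest step to be Stage 2(d): simultaneously matching the endpoint exactly while keeping the $s=0$ end equal to $\mathrm{id}$. This requires checking that $\ker(\Eval_{s=0})_1\to C_1$ via $\Eval_{s=1}$ is surjective and compatible with $\delta_1$. That compatibility holds because $\Eval_j$ are strict chain maps, and surjectivity follows from axiom (iv). The cohomological invertibility of $(S^{K+1}f_1)^*$ in Stage 1(c) is the other point needing care.
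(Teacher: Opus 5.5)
Your proposal is correct and is essentially the paper's argument: extend $h^{(K)}$ with the $s=0$ end fixed via the preceding Corollary, show $[O_{K+1}(g^{(K)})]=0$ using Lemma \ref{fasgg}(v) and the quasi-isomorphism $f$, correct $g_{K+1}$ by a cocycle to kill the class of the endpoint discrepancy, and absorb the remaining exact part by $\delta_1$ of a lift annihilated by $(\Eval_{s=0})_1$. Your $c$, $e$, $\tilde e$ correspond to the paper's $-\Delta g'_{K+1}$, $\Delta_1 h_{K+1}$, $\Delta h_{K+1}$, and your cocycle $D$ (the full difference of $(K+1)$-st components) is slightly cleaner than the paper's $\Xi$, which omits the lower-order terms of $(g\circ f)_{K+1}$.
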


\begin{proof}
Since $\mathrm{id}_{C_1, k \geq 2} \equiv 0,$ by the previous corollary, we have $h_{K+1}'$ such that $h_1, \cdots, h_K,$ and $h_{K+1}'$ form an $L_{K+1}[1]$-morphism ${h}' : C_1 \rightarrow \mathfrak{C}_1$ and that $\Eval_{s=0} \circ h' = \mathrm{id}_{C_1}.$

From the definition of the obstruction class $O_{K+1}(\cdot),$ it follows that
\begin{equation}\label{ost}
O_{K+1}\left(g^{(K)} \circ f\right) = - \delta_1\left((\Eval_{s=1})_1 \circ h_{K+1}'\right).
\end{equation}
In other words, $(\Eval_{s=1})_1 \circ h_{K+1}'$ extends $g^{(K)}\circ f$ to an $L_{K+1}[1]$-morphism. Since $f$ is a quasi-isomorphism, we then have $[O_{K+1}\left(g^{(K)}\right)] = 0$ from (v) of Lemma \ref{fasgg}. Hence there exists $g'_{K+1} \in \mathrm{Hom}\left(S^{K+1}C_2, C_1\right)$ such that
\begin{equation}\label{gagaga}
O_{K+1}\left(g^{(K)}\right) = -\delta_1\left(g'_{K+1}\right).
\end{equation}

We denote
\[
\Xi := g'_{K+1} \circ f_1^{\otimes K+1} - (\Eval_{s=1})_1 \circ h'_{K+1} \in \mathrm{Hom}\left(S^{K+1}C_1, C_1\right).
\]
Observe that (\ref{ost}) and (\ref{gagaga}) imply that $\delta_1(\Xi)=0.$

Then by the quasi-isomorphism of $f,$ there exists a $\delta_1$-cocycle 
\[
\Delta g'_{K+1} \in \mathrm{Hom}\left(S^{K+1}C_2, C_1\right)
\]
so that $g'_{K+1}$ replaced by $g'_{K+1} + \Delta g'_{K+1}$ still satisfies (\ref{gagaga}). For this choice, there exist $\delta_1\left(g''_{K+1}\right)$-worth choices involved for some $g''_{K+1} \in \mathrm{Hom}\left(S^{K+1}C_2, C_1\right)$. Then, again by the quasi-isomorphism of $f$, it follows that $\left[\Xi + \Delta g'_{K+1} \circ f_1^{\otimes K+1}\right] = 0$. In other words, there exists
\[
\Delta_1 h_{K+1} \in \mathrm{Hom}\left(S^{K+1}C_1, C_1\right),
\]
satisfying
\[
\delta_1\left(\Delta_1 h_{K+1}\right) = \left(g_{K+1}' + \Delta g_{K+1}'\right) \circ f_1^{\otimes K+1} - \left(\mathrm{Eval}_{s=1}\right)_1 \circ h'_{K+1}.
\]
Since $\bigoplus\limits_{j} (\Eval_{s=j})_1$ is surjective by (iv) of Definition \ref{mdl1}, we obtain 
\[
\Delta h_{K+1} \in \mathrm{Hom}\left(S^{K+1}, \mathfrak{C}_1\right)
\]
such that
\[
\left(\Eval_{s=0}\right)_1 \circ \Delta h_{K+1} = 0, \ \left(\Eval_{s=1}\right)_{1} \circ \Delta h_{K+1} = \Delta_1 h_{K+1}.
\]
Now denoting
\[
g_{K+1} := g'_{K+1} + \Delta g_{K+1}' \text{ and }\ h_{K+1}:= h'_{K+1} + \delta_1 \left(\Delta h_{K+1}\right),
\]
we can easily show that $g_1, \cdots, g_{K+1}$ and $h_1, \cdots, h_{K+1}$ define $L_{K+1}[1]$-morphisms (denoted by $g^{(K+1)}$ and $h^{(K+1)}$) that extend $g^{(K)}$ and $h^{(K)},$ respectively.
Moreover, it immediately follows that $h^{(K+1)}$ is an $L_{K+1}[1]$-homotopy from identity to $g^{(K+1)} \circ f.$
\end{proof}

The following theorem is an $L_{\infty}[1]$-algebra version of \cite[Theorem 4.2.45 (1)]{FOOO1}.
\begin{thm}[Whitehead theorem]\label{wht}
Over a field, a quasi-isomorphic $L_{\infty}[1]$-morphism is a homotopy equivalence.
\end{thm}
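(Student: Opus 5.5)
Given a quasi-isomorphism $f : C_1 \rightarrow C_2$, the plan is to build an $L_{\infty}[1]$-morphism $g : C_2 \rightarrow C_1$ together with a homotopy $h : C_1 \rightarrow \mathfrak{C}_1$ from $\mathrm{id}_{C_1}$ to $g \circ f$. Both are constructed order by order in $K$ using Proposition \ref{prevprop}. Then the same argument, applied to $g$, gives an inverse for $g$, and a formal argument with Lemma \ref{eerel} upgrades the one-sided inverse to a two-sided one.

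\emph{Step 1 (base case).} Over a field the quasi-isomorphic chain map $f_1$ is a chain homotopy equivalence. Choose a chain map $g_1 : C_2 \rightarrow C_1$ and a chain homotopy $H : C_1 \rightarrow C_1$ of degree $-1$ with $\mathrm{id} - g_1 f_1 = l_1 H + H l_1$. Then $g^{(1)} := \{g_1\}$ is an $L_1[1]$-morphism. For the homotopy I would fix a concrete model of $\Delta^1 \times C_1$, namely the standard one $C_1 \otimes \Omega(\Delta^1)$, i.e.\ elements $x(s) + ds\, y(s)$ with polynomial coefficients, with evaluations at $s = 0,1$ and constant inclusion. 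Its higher operations $l_k$ are extended $\Omega(\Delta^1)$-linearly. Checking the axioms of Definition \ref{mdl1} is routine: surjectivity is clear, and $\Incl$ is a quasi-isomorphism by the Poincaré lemma. The $L_1[1]$-homotopy is $h_1(x) = (1-s)x + s\, g_1 f_1(x) + ds\, H(x)$, up to signs. One checks that $h_1$ is a chain map with $\Eval_0 \circ h_1 = \mathrm{id}$ and $\Eval_1 \circ h_1 = g_1 f_1$.

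\emph{Step 2 (induction).} Proposition \ref{prevprop} extends $(g^{(K)}, h^{(K)})$ to $(g^{(K+1)}, h^{(K+1)})$, with $h^{(K+1)}$ still a homotopy from $\mathrm{id}$ to $g^{(K+1)} \circ f$. Each step only adds the components $g_{K+1}$ and $h_{K+1}$ and never changes lower ones. So the limits $g = \{g_k\}_{k \geq 1}$ and $h = \{h_k\}_{k \geq 1}$ are well defined, and each $L_\infty[1]$ relation involves only finitely many components. Hence $g$ is an $L_\infty[1]$-morphism and $h$ is a homotopy, giving $g \circ f \sim \mathrm{id}_{C_1}$.

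\emph{Step 3 (two-sidedness).} The morphism $g$ is itself a quasi-isomorphism, since $g_1$ is a homotopy inverse of $f_1$. Steps 1–2 applied to $g$ give $g'$ with $g' \circ g \sim \mathrm{id}_{C_2}$. Lemma \ref{eerel}, together with transitivity and the fact that composition is associative, gives
\[
f \sim g' \circ g \circ f \sim g' .
\]
Therefore $f \circ g \sim g' \circ g \sim \mathrm{id}_{C_2}$, again by Lemma \ref{eerel}.

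The main obstacle is Step 1, together with making sure the fixed model is legitimate. One has to verify that $C_1 \otimes \Omega(\Delta^1)$ satisfies (i)–(iv) of Definition \ref{mdl1}, with the Koszul signs right for $\Omega$-linear extension of the $l_k$ in the $[1]$-shifted convention. Compatibility of the models with Proposition \ref{prevprop} is automatic, since that proposition works for any fixed model. I would also note that strictness ($l_0 = 0$) is needed throughout, so that $l_1^2 = 0$ and quasi-isomorphism makes sense.
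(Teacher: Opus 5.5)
Your proof is correct and follows the paper's argument. Both use a chain-homotopy inverse $g_1$ over a field for the base case, extend inductively via Proposition \ref{prevprop}, and upgrade to a two-sided inverse via $f \sim g'\circ g\circ f \sim g'$ and Lemma \ref{eerel}. The only difference is the base case: the paper works in an arbitrary model and lifts the chain homotopy using the surjectivity axiom (iv), setting $h_1 = \Incl_1 + l_1 h_1'' + h_1'' l_1$, whereas you write an explicit $h_1$ in the polynomial de Rham model (essentially the $n=1$ case of Example \ref{hhex}, whose Poincar\'e lemma needs characteristic zero, already implicit in the paper's $L_\infty[1]$ formulas).
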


\begin{proof}
Let $f : C_1 \rightarrow C_2$ be a quasi-isomorphic $L_{\infty}[1]$-morphism. Recall that for chain complexes over a field, quasi-isomorphism is equivalent to chain homotopy equivalence (cf. \cite[Remark 2.9]{AT}). Moreover, chain homotopy equivalence coincides with $L_{1}[1]$-homotopy equivalence (cf. \cite[Lemma 2.4]{AT}). Thus, there exists a chain map $g_1 : C_2 \rightarrow C_1$ such that $g_1 \circ f_1$ is chain homotopic to identity. Denote by $g^{(1)}$ the $L_1[1]$-morphism interpretation of $g_1$ (with the trivial higher-order operations) and by $h'_1$ the corresponding chain homotopy.

Since $\bigoplus\limits_{i} \left(\Eval_{s=i}\right)_1$ is surjective, we have $h_1'' : C_1 \rightarrow \mathfrak{C}_1$ such that
\begin{equation}\nonumber
\left(\Eval_{s=0}\right)_1 \circ h_1'' = 0, \ \left(\Eval_{s=1}\right)_1 \circ h_1'' = h_1',
\end{equation}
where $\mathfrak{C}_1$ is a model of $\Delta^1 \times C_1.$ We then denote 
\[
h_1 := \Incl_1 + l_1 \circ h_1'' + h_1'' \circ l_1 : C_1 \rightarrow \mathfrak{C}_1 
\]
and verify that it is an $L_1[1]$-homotopy from the identity morphism $\mathrm{id}_{C_1}$ to $g^{(1)} \circ f$:
\begin{equation}
\begin{split}
(\Eval_{s=0})_1 \circ h_1 &= (\Eval_{s=0})_1 \circ (\Incl_1 + l_1 \circ h_1'' + h_1'' \circ l_1) = \mathrm{id}_{C_1},\\ 
(\Eval_{s=1})_1 \circ h_1 &= (\Eval_{s=1})_1 \circ (\Incl_1 + l_1 \circ h_1'' + h_1'' \circ l_1) \\
&= \mathrm{id}_{C_1} + g^{(1)} \circ f - id_{C_1} = g_1 \circ f_1.
\end{split}
\end{equation}

Applying Proposition \ref{prevprop} inductively, we then obtain an $L_{\infty}[1]$-morphism $g : C_2 \rightarrow C_1$ and an $L_{\infty}[1]$-homotopy $h$ from the identity morphism to $g \circ f.$

Similarly, there exists $f'$ such that $f' \circ g$ is homotopic to identity. Observe that $f \sim f' \circ g \circ f \sim f',$ so that $\mathrm{id}_{C_2} \sim f' \circ g \sim f \circ g.$ Thus $g$ is the desired homotopy inverse of $f.$
\end{proof}

\section{Higher homotopies of $L_{\infty}[1]$-morphisms}

In this section, we develop a system of models which enables us to uniformly handle the homotopy of homotopies and general higher homotopies (cf. \cite[Remark 7.2.262]{FOOO1}). The lower-degree analogues of $A_{\infty}$-structures are explained in detail in \cite[Section 4.2]{FOOO1} ($n=1$) and \cite[Section 7.2.12]{FOOO1} ($n=2$).

\subsection{Models of $\Delta^n \times C$ with $n \geq 2$}

We claim that the following definition provides a systematic uniform higher degree simplicial extension of the models of $\Delta^k \times C$ used for $k=1, \, 2$ in \cite[Section 4.2 \& Subsection 7.2.12]{FOOO1}. In particular, the $L_{\infty}[1]$-morphism $\mathrm{Eval}_j$'s ($j = 0,1$) and the chain map $\mathrm{Incl}$ in Definition \ref{mdl1} coincide with $\mathrm{Eval}^{(1)}_j$'s ($j = 0,1$) and $\mathrm{Incl}^{(1)}$ in the following definition, respectively.

\begin{defn}[Models of $\Delta^n \times C$]\label{hhtp}
Let $C$ be an $L_{\infty}[1]$-algebra. Suppose that we have defined models of $\Delta^k \times C$ with $k \leq n-1.$ We recursively define \textit{models of} $\Delta^n \times C$ with $n \geq 2$ to be a collection of $L_{\infty}[1]$-algebras
\begin{equation}\nonumber
\mathfrak{C}^{(n)}, \  \left(\mathfrak{C}^{(n)}\right)_{J} \equiv \mathfrak{C}_{J}^{(n-1)}, \text{ where } J \text{ is a subset of } \{0, \cdots, n\} \text{ such that } |J| = n,
\end{equation}
together with an $L_{\infty}[1]$-morphisms
\[
\Eval_J^{(n)} :  \mathfrak{C}^{(n)} \rightarrow \mathfrak{C}_{J}^{(n-1)},
\]
and
\[
\Incl^{(n)} : {C} \rightarrow \mathfrak{C}^{(n)}\\
\]
with the following properties:
\begin{enumerate}[label = (\roman*)]
\item $\mathfrak{C}_{J}^{(n-1)}$ is a model of $\Delta^{n-1} \times C$ with $\mathfrak{C}_{\{i\}}^{(0)} = C$ for each $i.$
\item $\left(\mathfrak{C}_{J}^{(n-1)} \right)_{J'} = \left(\mathfrak{C}_{J'}^{(n-1)} \right)_{J} = \mathfrak{C}_{J \cap J'}^{(n-2)}$ for all $J, J' \subset \{0, \cdots, n\}$ with $|J| = |J'| = n$ and $|J \cap J'| = n-1.$ 
\item $\left(\Eval_j\right)_{k \geq 2} \equiv 0, \ j =0,1, \ \Incl_{k \geq 2} \equiv 0.$
\item $\Eval_J^{(n)}$ and $\Incl^{(n)}$ are quasi-isomorphisms.
\item $\Eval_J^{(n)} \circ \Incl^{(n)} = \Incl_J^{(n-1)},$
where $ \Incl_J^{(n-1)}$ is the Incl map for $\mathfrak{C}_{J}^{(n-1)},$ the model of $\Delta^{n-1} \times C$ for the index $J.$
\item The following sequence of chain complexes
\[
\begin{split}
\mathfrak{C}^{(n)} \xrightarrow{\partial_n}  \bigoplus_{\substack{J \subset \{0, \cdots, n\}, \\ |J| = n}} &\mathfrak{C}_J^{(n-1)} \xrightarrow{\partial_{n-1}} \bigoplus_{\substack{J' \subset \{0, \cdots, n\}, \\ |J'|=n-1}} \mathfrak{C}_{J'}^{(n-2)} \xrightarrow{\partial_{n-2}} \\
&\cdots \xrightarrow{\partial_{2}} \bigoplus_{\substack{J'' \subset \{0, \cdots, n\}, \\ |J''|=2}} \mathfrak{C}_{J''}^{(1)} \xrightarrow{\partial_{1}} \bigoplus_{i \in \{0, \cdots, n\}}C \rightarrow 0
\end{split}
\]
is in fact a chain complex that is exact at the first term. In other words, we require
\[
\ker \partial_{n-1} = \mathrm{Im} \partial_n.
\]
Here, the differentials $\partial_n$ and $\partial_{n-k}, \ 1 \leq k \leq n-1$ are given by
\begin{equation}\label{ppnnk}
\begin{cases}
\partial_n &:= \bigoplus\limits_{\substack{J \subset \{0, \cdots, n\}, \\ |J| = n}} \left(\Eval_J^{(n)}\right)_1, \\
\partial_{n-k} &:= \sum\limits_{\substack{J' \subsetneq J \subset \{0, \cdots, n\}, \\ |J| = |J'| + 1 = n-k}} \partial_{n-k, J, J'},
\end{cases}
\end{equation}
and each $\partial_{n-k, J, J'} : \mathfrak{C}_J^{(n-k)} \rightarrow \mathfrak{C}_{J'}^{(n-k-1)}$ by
\begin{equation}\nonumber
\partial_{n-1, J, J'} := \mathrm{sgn}\big(\sigma(J',J \setminus J')\big) \left(\Eval_{J, J'}^{(n-k)}\right)_1,
\end{equation}
where the map
\begin{equation}\nonumber
\Eval^{(n-k)}_{J, J'} : \mathfrak{C}_{J}^{(n-k)} \rightarrow \mathfrak{C}_{J'}^{(n-k-1)}
\end{equation}
is from the model of $\Delta^{n-k} \times C,$ that is, from $\mathfrak{C}_{J}^{(n-k)},$ while $\sigma(J',J \setminus J')$ denotes the $(J',J \setminus J')$-unshuffle.

In particular, we have 
\[
\left(\Eval^{(n-1)}_{J_1 \cap J_2}\right)_1 \circ \left(\Eval^{(n)}_{J_1}\right)_1 = \left(\Eval_{J_1 \cap J_2}^{(n-1)}\right)_1 \circ \left(\Eval_{J_2}^{(n)}\right)_1
\] 
for all $J_1, J_2 \subset \{0, \cdots, n\}$ with $|J_1| = |J_2| = n$ and $|J_1 \cap J_2| = n-1.$
\end{enumerate}
\end{defn}

Using models of $\Delta^n \times C$, we can define higher homotopies:
\begin{defn}\label{dntchh}
Let $C$ and $C'$ be $L_{\infty}[1]$-algebras and $f_0, \cdots, f_n : C \rightarrow C',$ $L_{\infty}[1]$-morphisms for $n \geq 1.$ Consider a sequence $\vec{J}$ of subsets
\begin{equation}\nonumber
\vec{J} : J_0 \subsetneq J_1 \subsetneq \cdots \subsetneq J_{n-1} \subsetneq \{0, \cdots, n\}
\end{equation}
with $|J_l| = l+1, \ 0 \leq l \leq n-1.$
We say $f_0, \cdots, f_n : C \rightarrow C'$ are $n$-\textit{homotopic} if there exist a model of $\Delta^n \times C',$ say $\mathfrak{C}'^{(n)},$ and an $L_{\infty}[1]$-morphism $h : C \rightarrow \mathfrak{C}'^{(n)}$ such that
\begin{equation}\nonumber
\Eval^{(1)}_{J_0} \circ \cdots \circ \Eval^{(n)}_{J_{n-1}} \circ h = f_j
\end{equation}
for each sequence $\vec{J}$ with $J_0 = \{j\}.$
We call such a map $h$ an $L_{\infty}[1]$-$n$\textit{-homotopy}, or simply \textit{n-homotopy} ($n \geq 1$) of $f_0, \cdots, f_n.$ 
\end{defn}

\begin{rem}
\begin{enumerate}[label = (\roman*)]
\item The $n$-homotopy $h$ in the previous definition is well-defined by the axiom (vi) of Definition \ref{hhtp}, that is, it is independent of the choice of $\vec{J}.$
\item It follows from the definition that if $L_{\infty}[1]$-morphisms $f_0, \cdots f_n$ ($n \geq 2$) are $n$-homotopic, then $f_{j_0}, \cdots, f_{j_m}$ are $m$-homotopic for each tuple $j_0 < \cdots < j_m$ with $\{j_0, \cdots, j_m\} \subset \{0, \cdots, n\}, \ m \leq n.$
\item The previous definition naturally generalizes Definition \ref{mdl1} for the lower degree notion.
\end{enumerate}
\end{rem}

\begin{exam}\label{hhex}
Let $\Delta^n$ be the standard $n$-simplex and $\Omega^*(\Delta^n)$ its de Rham complex over a field. We denote
\begin{equation}\nonumber
\begin{split}
\mathfrak{C}^{(n)} &:= \Omega^*(\Delta^n) \otimes C,\\
\mathfrak{C}^{(n-1)}_{J_i} &:= \Omega^*(\partial_i \Delta^n) \otimes C, \text{ where } \ J_i = \{0, \cdots, n\} \setminus \{i\} \text{ for } 0 \leq i \leq n.\
\end{split}
\end{equation}
On $\mathfrak{C}^{(n)},$ there exists an $L_{\infty}[1]$-algebra structure
\[
l_k : (\mathfrak{C}^{(n)})^{\otimes k} \rightarrow \mathfrak{C}^{(n)}, \ k \geq 1,
\]
which is given by
\begin{equation}\nonumber
l_k(\alpha_1 \otimes x_1, \cdots, \alpha_k \otimes x_k) := 
\begin{cases}
d \alpha_1 \otimes x_1 + (-1)^{|\alpha_1|} \alpha_1 \otimes l_1(x_1) &\text{ if } k =1,\\
 (-1)^{|\vec{\alpha}|} \alpha_1 \wedge \cdots \wedge \alpha_k \otimes l_k(x_1, \cdots, x_k) &\text{ if } k \geq 2,
\end{cases}
\end{equation}
for each $\alpha_i \in \Omega^*(\Delta^n), \ x_i \in C, \ i = 1, \cdots, k,$ and $k \geq 1.$ Here, we denote
\begin{equation}\nonumber
|\vec{\alpha}| := \sum\limits_{i = 1}^{k-1}|x_i| \cdot (|\alpha_{i+1}|+ \cdots + |\alpha_k|) + \sum\limits_{i = 1}^k |\alpha_i|.
\end{equation}

The $L_{\infty}[1]$-morphisms $\Eval^{(n)}_{J_i} = \left\{\left(\Eval^{(n)}_{J_i}\right)_k\right\}_{k \geq 1}$ and $\Incl^{(n)}$ are given by
\begin{equation}\nonumber
\left(\Eval_{J_i}^{(n-1)}\right)_k : \left(\Omega^*(\Delta^n) \otimes C\right)^{\otimes k} \rightarrow \Omega^*(\partial_i \Delta^n) \otimes C,
\end{equation}
\begin{equation}\nonumber
\left(\Eval_{J_i}^{(n-1)}\right)_k :=  
\begin{cases}
\text{(the restriction to $i$-th face)} \otimes \textrm{id}_C & \text{ if } k = 1,\\
 0 &\text{ if } k \geq 2,
\end{cases}
\end{equation}
and
\begin{equation}\nonumber
\begin{split}
\Incl_k^{(n)} &: C^{\otimes k} \rightarrow \Omega^*(\Delta^n) \otimes C,\\
\Incl_k^{(n)} &:= 
\begin{cases}
1 \otimes \textrm{id}_C & \text{ if } k = 1,\\
 0 &\text{ if } k \geq 2,
\end{cases}
\end{split}
\end{equation}
respectively. It immediately follows that all the conditions in Definition \ref{hhtp} are satisfied. In particular, one can easily see that $\Eval^{(n)}_{J_i}$ and $\Incl^{(n)}$ are $L_{\infty}[1]$-algebra morphisms. Moreover, they are quasi-isomorphisms, whose proof can be sketched as follows. Since $\Incl_1^{(n)}$ is injective, the quasi-isomorphicity is equivalent to the acyclicity of the quotient complex 
\[
\frac{\Omega^*(\Delta^n) \otimes C}{\Incl_1^{(n)}(C)} \simeq \frac{\Omega^*(\Delta^n) \otimes C}{\{1\} \otimes C} \simeq \frac{\Omega^*(\Delta^n)}{\{\text{const. ftns.}\}} \otimes C,
\]
which follows from the acyclicity of $\frac{\Omega^*(\Delta^n)}{\{\text{const. ftns.}\}}$ and the K\"{u}nneth formula. Finally, the axiom (v) of Definition \ref{hhtp} with an inductive argument implies that $\Eval^{(n)}_{J_i}$ is also a quasi-isomorphism.
\end{exam}

We now state a key proposition in this section:
\begin{prop}[Existence of filling homotopies]\label{pphhe}
Let $f_0, \cdots, f_{n+1} : C_0 \rightarrow C \ (n \geq 0)$ be quasi-isomorphic $L_{\infty}[1]$-morphisms. Suppose that we are given an $n$-homotopy $h_J : C_0 \rightarrow \mathfrak{C}^{(n)}_J$ of $f_{j_0}, \cdots, f_{j_n}$
 for each given $J = \{ j_0 < \cdots < j_n\} \subset \{0, \cdots, n+1\},$ satisfying 
\begin{equation}\label{adsfdsfaag}
\Eval^{(n)}_{J \cap J'} \circ h_J = \Eval^{(n)}_{J \cap J'} \circ h_{J'} 
\end{equation}
for two distinct $J$ and $J'.$ Then there exist  a model $\mathfrak{C}^{(n+1)}$ of $\Delta^{n+1} \times C$ and an $(n+1)$-homotopy $\overline{h} : C_0 \rightarrow \mathfrak{C}^{(n+1)}$ of $f_0, \cdots, f_{n+1}$ such that ${\mathfrak{C}^{(n)}_J}$'s belong to the data for $\mathfrak{C}^{(n+1)},$ satisfying $\mathrm{Eval}^{(n+1)}_J \circ \overline{h} = h_J.$
\end{prop}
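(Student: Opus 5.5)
Construct the model $\mathfrak{C}^{(n+1)}$ as a ``matching object'' over the given boundary data, modified so that $\Incl$ and the evaluations become quasi-isomorphisms. Then build $\overline{h}$ order by order using the obstruction theory of Lemma \ref{fasgg}, in the same way as the proofs of Proposition \ref{prevprop} and Theorem \ref{wht}.

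\emph{Step 1 (the model).} Let $M := \ker \partial_n \subset \bigoplus_{|J|=n+1} \mathfrak{C}^{(n)}_J$. This is the space of compatible tuples $(x_J)_J$ with $\Eval^{(n)}_{J\cap J'}(x_J)=\Eval^{(n)}_{J\cap J'}(x_{J'})$. Since every $\Eval$ is strict (axiom (iii)), the componentwise operations (\ref{pluslinfty}) restrict to $M$, exactly as in the transitivity lemma for homotopies, so $M$ is an $L_\infty[1]$-algebra. The diagonal map $x\mapsto(\Incl^{(n)}_J x)_J$ lands in $M$ by axiom (v).

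$M$ itself need not satisfy (iv). Its cohomology is that of the boundary ``$\partial\Delta^{n+1}\times C$''. Over a field, one can compute it from the exact complex in axiom (vi) for the faces, together with the quasi-isomorphisms $\Incl^{(n)}_J$; the answer is $H(C)\oplus H(C)[-n]$, the top class coming from the sphere. I will therefore define
\[
\mathfrak{C}^{(n+1)} := M \oplus \Omega^*(\Delta^{n+1},\partial\Delta^{n+1})\otimes C,
\]
with the $L_\infty[1]$-structure built as in Example \ref{hhex}. A simpler alternative is to take the fibre product of $M$ with the model $\Omega^*(\Delta^{n+1})\otimes C$ of Example \ref{hhex} over the restriction maps to $\Omega^*(\partial\Delta^{n+1})\otimes C$. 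Concretely, fill in using a cone that kills the top class. Define $\Eval^{(n+1)}_J$ as projection to the $J$-component, and $\Incl^{(n+1)}$ as the diagonal.

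Axioms (i), (ii), (iii) and (v) hold by construction. Exactness at the first term in (vi) is $\ker\partial_n=\operatorname{Im}\partial_{n+1}$, which holds because the projection onto $M$ is surjective. Axiom (iv) reduces to two facts: $\Incl^{(n+1)}$ is a quasi-isomorphism (the cone kills exactly the extra class), and $\Eval_J\circ\Incl=\Incl_J$ is a quasi-isomorphism, so each $\Eval_J$ is a quasi-isomorphism by 2-out-of-3.

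\emph{Step 2 (the homotopy).} The given $h_J$ satisfy (\ref{adsfdsfaag}), so $(h_J)_J$ already defines an $L_\infty[1]$-morphism $h_\partial : C_0 \to M$. What remains is to lift $h_\partial$ through the surjection $\mathfrak{C}^{(n+1)}\to M$, which is strict and surjective with acyclic kernel. I will build $\overline{h}_k$ inductively in $k$, imitating Corollary 4.5.5 and Proposition \ref{prevprop}. Given $\overline{h}_{\le K}$ lifting $(h_\partial)_{\le K}$, the obstruction $O_{K+1}(\overline{h})$ maps to $O_{K+1}(h_\partial)=-\delta_1((h_\partial)_{K+1})$. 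Choose any lift $h'_{K+1}$ of $(h_\partial)_{K+1}$, using surjectivity. The difference $O_{K+1}(\overline{h})+\delta_1(h'_{K+1})$ is then a $\delta_1$-cocycle with values in the acyclic kernel. Because we work over a field, $\mathrm{Hom}(S^{K+1}C_0,\ker)$ is acyclic, so this cocycle equals $\delta_1(\Delta)$ for some $\Delta$ in the kernel. Correcting $h'_{K+1}$ by $\Delta$ gives $\overline{h}_{K+1}$.

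The conditions $\Eval_J\circ\overline{h}=h_J$ and the vertex conditions then follow from the corresponding conditions for the $h_J$. The quasi-isomorphism hypothesis on the $f_i$ is not needed in this step; it matters only for the corollary's induction, where Theorem \ref{wht} is used to produce the lower homotopies.

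\emph{Main obstacle.} The hard part is Step 1: producing an honest $L_\infty[1]$-algebra $\mathfrak{C}^{(n+1)}$ that surjects strictly onto $M$ with acyclic kernel, so that the cohomology computation gives (iv). My first attempt will be the fibre product of $M$ with $\Omega^*(\Delta^{n+1})\otimes C$ over $\Omega^*(\partial\Delta^{n+1})\otimes C$. This requires a strict comparison map $M\to\Omega^*(\partial\Delta^{n+1})\otimes C$, which I would try to obtain by applying Theorem \ref{th} face by face, inductively over skeleta. If that compatibility fails, I will fall back to an explicit mapping-cylinder/cone construction on $M$. In that construction the only nontrivial $l_k$ are inherited from $M$ via the projection, so the $L_\infty$ relations are automatic.
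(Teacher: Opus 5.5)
\textbf{Step 2 fails.} It opens by asserting that $p=(\Eval^{(n+1)}_J)_J:\mathfrak{C}^{(n+1)}\to M$ is surjective with acyclic kernel. This contradicts what Step 1 must achieve. You need $p$ surjective for (vi) and $H(\mathfrak{C}^{(n+1)})\cong H(C)$ for (iv), and you computed $H(M)\cong H(C)\oplus H(C)[-n]$. The long exact sequence of $0\to\ker p\to\mathfrak{C}^{(n+1)}\to M\to 0$ then makes $H(\ker p)$ a copy of $H(C)$ shifted by $n+1$. In your fibre-product version, $\ker p=\Omega^*(\Delta^{n+1},\partial\Delta^{n+1})\otimes C$.

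The analogy with the corollary preceding Proposition \ref{prevprop} breaks exactly here. There one lifts through the single map $\Eval_{s=0}$, whose kernel is acyclic. You lift through all faces at once, the analogue of $(\Eval_0)_1\oplus(\Eval_1)_1$ in Definition \ref{mdl1}, whose kernel is not acyclic.

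The resulting obstruction is genuine and does not depend on the model. By (v), $p\circ\Incl^{(n+1)}=(\Incl^{(n)}_J)_J$, and $\Incl^{(n+1)}$ is a quasi-isomorphism. So any lift $\overline h$ forces the image of $H\big((h_{J,1})_J\big)$ in $H(M)/\mathrm{Im}\,H\big((\Incl^{(n)}_J)_J\big)\cong H(C)[-n]$ to vanish. Nothing in the hypotheses guarantees this, and the fact that the quasi-isomorphism hypothesis on the $f_i$ never enters your argument is a symptom.

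\textbf{The statement itself fails, so no argument can close the gap.} For $n=0$ the condition reads $H(f_0)=H(f_1)$, and any homotopy forces this: (ii) and (iii) of Definition \ref{mdl1} give $H(\Eval_0)=H(\Incl)^{-1}=H(\Eval_1)$. Take $C_0=C=\mathbf{k}^2$ with all $l_k=0$. The quasi-isomorphisms $f_0=\mathrm{id}$ and $f_1$ the coordinate swap admit no filling, so Corollary \ref{anhp} fails for them too.

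For $n\geq 1$ the class can be nonzero even when all $f_i=\mathrm{id}$. Take $n=1$ and $C=\mathbf{k}\oplus\mathbf{k}[1]$ with zero operations. Use one edge $x\mapsto 1\otimes x+dt\otimes\phi(x)$ in $\Omega^*(\Delta^1)\otimes C$, with $\phi$ a degree $-1$ isomorphism $C^0\to C^{-1}$, and two constant edges. Integrating around the triangle detects $\phi\neq 0$.

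\textbf{The paper's proof has the same hidden obstruction.} It takes a different route: rather than lifting into a prescribed model, it builds $\mathfrak{C}^{(n+1)}=\mathrm{Cone}(\mathrm{id}_{\ker\partial_n})\oplus C_0$, so that $\overline h_1$ is a quasi-isomorphism by construction. It then transfers the brackets along $\overline h_1$, so its obstructions do lie in an acyclic complex. It uses the quasi-isomorphism hypothesis on the $f_j$ to get (iv). However, the chain map $\bar\iota$ on $\mathrm{Cone}(\mathrm{id}_{\ker\partial_n})$ extending $\iota$ is precisely a null-homotopy of $\iota$. None exists when $H(C)\neq 0$, because $\iota\circ\overset{\circ}{h}_1=\bigoplus_J h_{J,1}$ is nonzero on cohomology.

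So an extra hypothesis is needed, at least the vanishing of the class above. Even with it, your Step 2 must be redone with obstructions in the non-acyclic complex $\mathrm{Hom}(S^{K+1}C_0,\ker p)$.
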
 
Before providing its proof in Subsection \ref{spppp}, which is lengthy, we state an immediate consequence.

\begin{cor}\label{anhp}
Arbitrarily given quasi-isomorphic $L_{\infty}[1]$-morphisms $f_0, \cdots, f_n : C \rightarrow C' \ (n \geq 1)$ are $n$-homotopic.
\end{cor}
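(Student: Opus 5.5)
The corollary follows from Proposition \ref{pphhe} by induction on the dimension of the faces: we build compatible homotopies on all faces of $\Delta^n$, skeleton by skeleton. Fix quasi-isomorphisms $f_0, \cdots, f_n : C \rightarrow C'$.

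For $m = 0, 1, \cdots, n$ we construct, for every subset $J \subset \{0, \cdots, n\}$ with $|J| = m+1$, a model $\mathfrak{C}'^{(m)}_J$ of $\Delta^m \times C'$ and an $m$-homotopy $h_J : C \rightarrow \mathfrak{C}'^{(m)}_J$ of $\{f_j\}_{j \in J}$. These must be compatible in two ways:
\begin{itemize}
\item the faces of $\mathfrak{C}'^{(m)}_J$ are the previously constructed $\mathfrak{C}'^{(m-1)}_{J'}$ for $J' \subset J$ with $|J'| = m$;
\item $\Eval^{(m)}_{J'} \circ h_J = h_{J'}$ for each such $J'$.
\end{itemize}

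For $m = 0$ we set $\mathfrak{C}'^{(0)}_{\{j\}} = C'$ and $h_{\{j\}} = f_j$.

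Suppose the data has been constructed up to level $m-1$, and fix $J$ with $|J| = m+1$. Relabel $J$ order-preservingly as $\{0, \cdots, m\}$ and apply Proposition \ref{pphhe} with its $n$ equal to $m-1$, to the $m+1$ quasi-isomorphisms $\{f_j\}_{j \in J}$ and the $(m-1)$-homotopies $h_{J'}$ for $J' \subset J$, $|J'| = m$. We must check hypothesis (\ref{adsfdsfaag}): for distinct $J', J''$ the inductive compatibility gives $\Eval^{(m-1)}_{J' \cap J''} \circ h_{J'} = h_{J' \cap J''} = \Eval^{(m-1)}_{J' \cap J''} \circ h_{J''}$. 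Here the target $\mathfrak{C}'^{(m-2)}_{J' \cap J''}$ is the same object for both, because the models were built with shared faces. The proposition then supplies $\mathfrak{C}'^{(m)}_J$, whose data contain the $\mathfrak{C}'^{(m-1)}_{J'}$, together with $h_J := \overline{h}$ satisfying $\Eval^{(m)}_{J'} \circ h_J = h_{J'}$. This is exactly the compatibility needed at level $m$.

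At $m = n$ there is only one subset, $J = \{0, \cdots, n\}$, and we obtain $h : C \rightarrow \mathfrak{C}'^{(n)}$. It remains to see that $h$ is an $n$-homotopy of $f_0, \cdots, f_n$ in the sense of Definition \ref{dntchh}. Take a flag $\vec{J}$ with $J_0 = \{j\}$. Peeling off the compatibilities one level at a time gives $\Eval^{(1)}_{J_0} \circ \cdots \circ \Eval^{(n)}_{J_{n-1}} \circ h = h_{J_0} = f_j$.

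The main point needing care is that the faces of the models and the evaluation maps agree, so that (\ref{adsfdsfaag}) is meaningful and axiom (ii) of Definition \ref{hhtp} holds. This is guaranteed because Proposition \ref{pphhe} produces models whose faces are the given $\mathfrak{C}^{(n)}_J$. For $n = 1$ the argument is simply the base case of the proposition with $n = 0$.
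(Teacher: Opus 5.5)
Your proof is correct and follows the paper's approach: the paper's one-line proof is exactly an induction using Proposition \ref{pphhe}. Your skeleton-by-skeleton version carries compatible face models and homotopies $h_J$ for every $J$, rather than just the bare statement for $n-1$. That is what makes the induction precise, since hypothesis (\ref{adsfdsfaag}) and the shared-face axiom cannot be obtained by applying the $(n-1)$ case to each face independently.
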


\begin{proof}
We can proceed with an induction on $n$ with Proposition \ref{pphhe}.
\end{proof}

\subsection{Proof of Proposition \ref{pphhe}}\label{spppp}

In this subsection, we give the proof of Proposition \ref{pphhe}. 

\noindent
\begin{proof}[Proof of Proposition \ref{pphhe}]

Construct the following chain complex from the family $\left\{\mathfrak{C}_{{J}}^{(n)}\right\}_{{J}}$ in (vi) of Definition \ref{hhtp}:
\[
\begin{split}
\bigoplus\limits_{\substack{J \subset \{0, \cdots, n+1\}, \\ |J| = n+1}} \mathfrak{C}_J^{(n)} \xrightarrow{\partial_n}  \bigoplus_{\substack{J' \subset \{0, \cdots, n+1\}, \\ |J'| = n}} &\mathfrak{C}_{J'}^{(n-1)} \xrightarrow{\partial_{n-1}}\\
&\cdots \xrightarrow{\partial_{2}} \bigoplus_{\substack{J'' \subset \{0, \cdots, n+1\}, \\ |J''|=2}} \mathfrak{C}_{J''}^{(1)} \xrightarrow{\partial_{1}} \bigoplus_{i \in \{0, \cdots, n+1\}}C \rightarrow 0,
\end{split}
\]
where the differentials are given in the same way as (\ref{ppnnk}).

Then we add $\ker \partial_n$ to obtain:
\[
\begin{split}
\ker \partial_n \overset{\iota}{\hookrightarrow} \bigoplus\limits_{\substack{J \subset \{0, \cdots, n+1\}, \\ |J| = n+1}} &\mathfrak{C}_J^{(n)} \xrightarrow{\partial_n}  \bigoplus_{\substack{J' \subset \{0, \cdots, n+1\}, \\ |J'| = n}} \mathfrak{C}_{J'}^{(n-1)} \xrightarrow{\partial_{n-1}} \\
& \quad \quad \quad \quad \quad \cdots \xrightarrow{\partial_{2}} \bigoplus_{\substack{J'' \subset \{0, \cdots, n+1\}, \\ |J''|=2}} \mathfrak{C}_{J''}^{(1)} \xrightarrow{\partial_{1}} \bigoplus_{i \in \{0, \cdots, n+1\}}C \rightarrow 0,
\end{split}
\]
where $\iota$ denotes the inclusion map.

Since $\partial_n \circ \bigoplus\limits_{{J}} h_{{J},1} = 0$ follows from (\ref{adsfdsfaag}), the map $\bigoplus\limits_{{J}} h_{{J},1}$ factors through $C_0 \to \ker \partial_n \overset{\iota}{\hookrightarrow} \bigoplus\limits_{{J}} \mathfrak{C}_{{J}}^{(n)}.$ In other words, there exists a chain map
\[
\overset{\circ}{h}_1 : C_0 \rightarrow \ker \partial_n
\]
such that 
\begin{equation}\label{kkklagg}
\iota \circ \overset{\circ}{h}_1 = \bigoplus\limits_{{J}} h_{{J},1}.
\end{equation}

Define the chain complex $\mathfrak{C}^{(n+1)}$ by the mapping cylinder:
\begin{equation}\label{adsgaahahhh}
\mathfrak{C}^{(n+1)} := {\mathrm{Cyl}} \simeq \bigoplus_{m} \big((\ker \partial_n)_m \oplus (\ker \partial_n)_{m+1} \oplus (C_0)_m\big)
\end{equation}
with the differential
\[
(x,y,z) \mapsto \big(dx + y, dy, dz\big).
\]
\[
\]
Recall that the inclusion
\begin{equation}\label{kercy}
i_k : \ker \partial_{n} \hookrightarrow \mathrm{Cyl}
\end{equation}
to the first component is a chain map.

We then define the chain map
\[
\overline{h}_1: C_0 \rightarrow \mathfrak{C}^{(n+1)} := \mathrm{Cyl}
\]
by
\[
\overline{h}_1(w) := \big(\overset{\circ}{h}_1(w), 0, w\big)
\]
for each $w \in C_0$. It follows that $\overline{h}$ is an injective chain map, hence we obtain a short exact sequence of chain complexes
\begin{equation}\label{sescckk}
0 \rightarrow C_0 \xrightarrow{\overline{h}_1} \mathfrak{C}^{(n+1)} \xrightarrow{g_{12}} \bigoplus\limits_m \big(\left(\ker \partial_n\right)_m \oplus \left(\ker \partial_n\right)_{m+1}\big) \rightarrow 0.
\end{equation}
Here, $g_{12}$ denotes the surjective chain map defined by
\[
\begin{split}
g_{12} : \mathfrak{C}^{(n+1)} &\rightarrow \bigoplus\limits_m \big(\left(\ker \partial_n\right)_m \oplus \left(\ker \partial_n \right)_{m+1}\big),\\
(u,v,w) &\mapsto \big(u - \overset{\circ}{h}_1(w), v\big),
\end{split}
\]
and it immediately follows that $\mathrm{Im}\overline{h}_1 = \ker g_{12}.$
Notice that $\big(\left(\ker \partial_n\right)_m \oplus \left(\ker \partial_n\right)_{m+1}\big) $ in (\ref{sescckk}) is nothing but the mapping cone of the identity map $\text{id}_{\ker \partial_n}$ on $\ker \partial_n.$ Since $H^*\big(\text{Cone}(\text{id}_{\ker \partial_n})\big) = 0$, we conclude that $\overline{h}_1$ is a quasi-isomorphism.

Since (the first component) $\ker \partial_n$ in $\bigoplus\limits_m\big(\left(\ker \partial_n\right)_m \oplus \left(\ker \partial_n\right)_{m+1}\big)$ is a direct summand (as a $\mathbf{k}$-module), the inclusion $\iota : \ker \partial_n \rightarrow \bigoplus\limits_J \mathfrak{C}^{(n)}_J$ extends to a chain map
\[
\overline{\iota} : \bigoplus\limits_m\big(\left(\ker \partial_n\right)_m \oplus \left(\ker \partial_n\right)_{m+1}\big)  \rightarrow \bigoplus\limits_J \mathfrak{C}^{(n)}_J
\]
with the property $\mathrm{Im}\overline{\iota} = \mathrm{Im}\iota =  \iota(\ker \partial_n) \simeq \ker \partial_n$ by defining the values of the elements in the complement of $\ker \partial_n$ inductively on the degrees, so that they satisfy the chain map condition.

Denote the $J$-component projection of the resulting chain map by
\begin{equation}\label{iclekn}
\left(\text{Eval}_{{J}}^{(n+1)}\right)_1 := \pi_{{J}} \circ \overline{\iota} \circ \pi_{12} : \mathfrak{C}^{(n+1)} \rightarrow \mathfrak{C}^{(n)}_J,
\end{equation}
where $\pi_{12}$ is the projection chain map to $\bigoplus\limits_m\big(\left(\ker \partial_n\right)_m \oplus \left(\ker \partial_n\right)_{m+1}\big).$
We define the chain map
\begin{equation}\label{ahldlfklsa}
\partial_{n+1} : \mathfrak{C}^{(n+1)} \rightarrow \bigoplus\limits_J \mathfrak{C}^{(n)}_J
\end{equation}
by $\partial_{n+1} := \bigoplus\limits_J \left(\text{Eval}_{{J}}^{(n+1)}\right)_1,$ and the $L_{\infty}[1] $-morphism 
\[
\text{Eval}_{{J}}^{(n+1)} := \left\{\left(\text{Eval}_{{J}}^{(n+1)}\right)_k\right\}_{k \geq 1}
\]
by
\[
\left(\text{Eval}_{{J}}^{(n+1)}\right)_k := \begin{cases}
\pi_{{J}} \circ \overline{\iota} \circ \pi_{12} \text{ in }(\ref{iclekn}) & k = 1, \\
0 & k \geq 2.
\end{cases}
\]

\begin{lem}
We have $\iota \circ \overset{\circ}{h}_1 = \overline{\iota} \circ \pi_{12} \circ \overline{h}_1.$
\end{lem}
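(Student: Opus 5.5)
The identity is checked pointwise on $C_0$, directly from the definitions of $\overline{h}_1$, $\pi_{12}$ and $\overline{\iota}$; no homological input is needed. It is an equality of linear maps, so the chain-map properties of the three maps play no role.

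First, fix $w \in C_0$ of degree $m$. By definition, $\overline{h}_1(w) = \big(\overset{\circ}{h}_1(w), 0, w\big) \in \mathfrak{C}^{(n+1)} = \mathrm{Cyl}$. Its first entry lies in $(\ker \partial_n)_m$, its second entry is zero in $(\ker\partial_n)_{m+1}$, and its third entry lies in $(C_0)_m$.

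Second, apply $\pi_{12}$. Here $\pi_{12}$ is the plain projection of $\mathrm{Cyl}$ onto the first two summands $(\ker \partial_n)_m \oplus (\ker\partial_n)_{m+1}$, not the map $g_{12}$, which would subtract $\overset{\circ}{h}_1(w)$. So $\pi_{12}\circ \overline{h}_1(w) = \big(\overset{\circ}{h}_1(w), 0\big)$. This element lies in the first direct summand $\ker\partial_n$ of $\mathrm{Cone}(\mathrm{id}_{\ker \partial_n})$.

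Third, apply $\overline{\iota}$. By construction, $\overline{\iota}$ was obtained by extending $\iota$ from the first summand $\ker \partial_n$; only the values on the complementary summand $(\ker\partial_n)_{m+1}$ were chosen inductively on degree. Hence $\overline{\iota}(x,0) = \iota(x)$ for every $x \in \ker\partial_n$. With $x = \overset{\circ}{h}_1(w)$ this gives $\overline{\iota}\circ\pi_{12}\circ\overline{h}_1(w) = \iota\big(\overset{\circ}{h}_1(w)\big)$, which is the claim. Combined with (\ref{kkklagg}), it also yields $\bigoplus_J h_{J,1} = \overline{\iota}\circ\pi_{12}\circ\overline{h}_1$, i.e.\ $\big(\Eval^{(n+1)}_J\big)_1\circ \overline{h}_1 = h_{J,1}$ after composing with $\pi_J$.

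The only point requiring care is the third step. One must make sure that the extension $\overline{\iota}$ genuinely restricts to $\iota$ on the summand $\{(x,0)\}$, rather than merely having the same image. I would make this explicit by fixing the splitting $\mathrm{Cone}(\mathrm{id}) = \ker\partial_n \oplus (\ker\partial_n)[1]$ and defining $\overline{\iota}(x,y) = \iota(x) + \phi(y)$ for the inductively chosen $\phi$.
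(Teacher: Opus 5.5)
Your proof is correct and matches the paper's argument. Evaluate at $w\in C_0$, observe that $\pi_{12}$ simply drops the $C_0$-coordinate of $\overline{h}_1(w)=\big(\overset{\circ}{h}_1(w),0,w\big)$, and then use that $\overline{\iota}$ restricts to $\iota$ on the first summand of $\mathrm{Cone}(\mathrm{id}_{\ker\partial_n})$; as you note, only this restriction property of $\overline{\iota}$ as a linear map is used, not its chain-map property.
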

\begin{proof}
It follows from
\[
\overline{\iota} \circ \pi_{12} \circ \overline{h}_1(w) = \overline{\iota} \circ \pi_{12} \big(\overset{\circ}{h}_1(w), 0, w\big) = \overline{\iota}\big(\overset{\circ}{h}_1(w), 0\big)= \iota(\overset{\circ}{h}_1(w))
\]
for $w \in C_0.$
\end{proof}

We then obtain the relation
\begin{equation}\label{hjipj}
{h}_{J,1} = \pi_J \circ \bigoplus\limits_J {h}_{J,1} = \pi_J \circ \iota \circ \overset{\circ}{h}_1= \pi_J \circ \overline{\iota} \circ \pi_{12} \circ \overline{h}_1 = \left(\text{Eval}_{{J}}^{(n+1)}\right)_1 \circ \overline{h}_1,
\end{equation}
as we have $\mathrm{Im}\overset{\circ}{h}_1 \subset \ker \partial_n.$ 

To define the chain map
\[
\mathrm{Incl}^{(n+1)} : C \rightarrow \mathfrak{C}^{(n+1)},
\]
we observe that
\[
\mathrm{Im} \left( \bigoplus\limits_J \mathrm{Incl}^{(n)}_{J,1} \right) \in \ker \partial_{n}
\]
holds by the axiom (v) of Definition \ref{hhtp}. As a consequence, we obtain a chain map (with the same notation) $\bigoplus\limits_J \mathrm{Incl}^{(n)}_{J,1} : C \rightarrow \ker \partial_n.$ Composing this with $i_k$ of (\ref{kercy}), we define
\[
\mathrm{Incl}^{(n+1)}_1 := i_k \circ \bigoplus\limits_J \mathrm{Incl}^{(n)}_{J,1}.
\]
for the inclusion $i_k : \ker \partial_{n} \hookrightarrow \mathrm{Cyl}.$
We now proceed with an induction on $k:$ Suppose that we have an $L_{K}[1]$-algebra structure $\left\{l_k : \mathrm{Im} h_{1} \rightarrow \mathfrak{C}^{(n+1)}\right\}_{k \leq K}$ on the subspace $\mathrm{Im}\overline{h}_{1} \subset \mathfrak{C}^{(n+1)},$ and that the family $\left\{\overline{h}_k \in \text{Hom}\left(S^{k}{C}_0, \mathfrak{C}^{(n+1)}\right)\right\}_{k \leq K}$ forms an $L_{K}[1]$-morphism. Then by the fact that $\left(\mathrm{Eval}^{(n+1)}_J\right)_1\bigg|_{\ker \partial_n} : \ker \partial_n \rightarrow \mathfrak{C}_J^{(n)}$ is surjective onto $\ker \partial_n$ contained in $\mathfrak{C}_J^{(n)},$ there exists 
\[
\overline{h}_{K+1} \in \text{Hom}\left(S^{K+1}{C}_0, \ker \partial_n\right),
\]
satisfying
\begin{equation}\label{hevhj}
\left(\mathrm{Eval}^{(n+1)}_J\right)_1 \circ \overline{h}_{K+1} = h_{J,K+1}.
\end{equation}

On the other hand, from the following formula for $\{\overline{h}_k\}_{k \leq K+1}$ being an $L_{K+1}[1]$-morphism
\begin{equation}\label{liftrel}
\begin{split}
l_{K+1} \circ \overline{h}_1^{\otimes K+1} =& \pm l_1 \circ \overline{h}_{K+1} \pm \overline{h}_{K+1} \circ \widehat{l}_1  + \sum\limits_{k_1 + k_2 = K+1} \pm \overline{h}_{k_1} \circ \widehat{l}_{k_2}\\ &+ \sum\limits_{k_1 + \cdots + k_l =K+1} \pm l_{l} \circ (\overline{h}_{k_1}, \cdots, \overline{h}_{k_l}),
\end{split}
\end{equation}
where the signs are determined by the relation (\ref{frel}), we can uniquely determine $l_{K+1} \in \text{Hom}\left(S^{K+1}(\mathrm{Im} h_1), \mathfrak{C}^{(1)}\right)$.

Now we extend $\{l_k\}_{k \leq K+1}$ to an $L_{K+1}[1]$-algebra structure
\begin{equation}\nonumber
\left\{l_k^h \in \text{Hom}\left(S^k{\mathfrak{C}^{(n+1)}}, \mathfrak{C}^{(n+1)}\right) \right\}_{k \leq K+1}
\end{equation}
with the induction hypothesis that $\left\{l_k^h\right\}_{k \leq K}$ is a given $L_K[1]$-algebra structure on $\mathfrak{C}^{(n+1)},$ and that it satisfies
\[
\left(\Eval^{(n+1)}_{J}\right)_1 \circ l^h_k = l_{k} \circ \left(\Eval^{(n+1)}_{J}\right)_1^{\otimes k}, \ 1 \leq k \leq K.
\] 

\begin{lem}
\begin{enumerate}[label = (\roman*)]
\item We have
\[
\left(\Eval^{(n+1)}_{J}\right)_1 \circ l_{K+1} = l_{K+1} \circ \left(\Eval^{(n+1)}_{J}\right)_1^{\otimes K+1}
\]
on $S^{K+1}(\mathrm{Im}\overline{h}_1).$
\item There exists 
\[
\eta_1 \in \mathrm{Hom}\left(S^{K+1}\mathfrak{C}^{(n+1)}, \mathfrak{C}^{(n+1)}\right)
\]
with the following properties:
\begin{enumerate}
\item[(a)]  $\eta_1$ extends $l_{K+1}.$
\item[(b)] For each $J \subset \{0, \cdots, n+1\}$ with $|J| = n+1,$ we have
\begin{equation}\label{elle}
\left(\Eval^{(n+1)}_{J}\right)_1 \circ\eta_1 = l_{K+1} \circ \left(\Eval^{(n+1)}_{J}\right)_1^{\otimes K+1}.
\end{equation}
\end{enumerate}
\end{enumerate}
\end{lem}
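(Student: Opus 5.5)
Both parts come down to one fact: $\overline{h}_1$ is injective, and every $(\Eval^{(n+1)}_J)_1$ factors through the projection $\pi_{12}$. For part (i) I will compare the two sides on elements $\overline{h}_1(w_1)\odot\cdots\odot\overline{h}_1(w_{K+1})$ with $w_i \in C_0$; these span $S^{K+1}(\mathrm{Im}\,\overline{h}_1)$ because $\overline{h}_1$ is injective. On such an element, $l_{K+1}$ was defined by solving (\ref{liftrel}) for $l_{K+1}\circ\overline{h}_1^{\otimes K+1}$. So I will apply $(\Eval^{(n+1)}_J)_1$ to the right-hand side of (\ref{liftrel}) and push it through term by term. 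There are three kinds of terms.
\begin{enumerate}[label=(\arabic*)]
\item Terms involving only $l_k$ with $k\le K$. By the induction hypothesis $(\Eval^{(n+1)}_J)_1 \circ l^h_k = l_k \circ (\Eval^{(n+1)}_J)_1^{\otimes k}$, the evaluation map commutes past them.
\item Terms involving $\overline{h}_k$ with $k\le K+1$. By (\ref{hevhj}) and its lower-order analogues, including (\ref{hjipj}) for $k=1$, we have $(\Eval^{(n+1)}_J)_1\circ \overline{h}_k = h_{J,k}$.
\item Terms in the $\widehat{l}$'s of $C_0$. These are untouched.
\end{enumerate}
Since $(\Eval^{(n+1)}_J)_k=0$ for $k\ge2$, the image of the right-hand side is exactly the right-hand side of the $L_{K+1}[1]$-morphism relation (\ref{frel}) for $h_J$ at order $K+1$. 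That relation holds because $h_J$ is an $L_\infty[1]$-morphism into $\mathfrak{C}^{(n)}_J$, so it equals $l_{K+1}\circ h_{J,1}^{\otimes K+1} = l_{K+1}\circ (\Eval^{(n+1)}_J)_1^{\otimes K+1}\circ \overline{h}_1^{\otimes K+1}$. This proves (i). The only bookkeeping is matching signs, which are dictated by (\ref{frel}) identically on both sides.

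For part (ii) I will build $\eta_1$ in two steps: first a lift of the required boundary values to $\ker\partial_n$, then a correction on $\mathrm{Im}\,\overline{h}_1$.

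Step 1 is the lift. For each $J$ put
\[
\Phi_J := l_{K+1}\circ(\Eval^{(n+1)}_J)_1^{\otimes K+1} \in \mathrm{Hom}(S^{K+1}\mathfrak{C}^{(n+1)},\mathfrak{C}^{(n)}_J).
\]
I will check that $\bigoplus_J \Phi_J$ lands in $\ker\partial_n$. For $J\ne J'$ with $|J\cap J'|=n$, the pairs differ only by the sign in (\ref{ppnnk}). The face maps $\Eval^{(n)}$ of the models $\mathfrak{C}^{(n)}_J$ are strict $L_\infty[1]$-morphisms, so they commute with the operations $l_{K+1}$. Combining this with the compatibility $(\Eval^{(n)}_{J\cap J'})_1\circ(\Eval^{(n+1)}_J)_1 = (\Eval^{(n)}_{J\cap J'})_1\circ(\Eval^{(n+1)}_{J'})_1$, which comes from $\mathrm{Im}\,\overline{\iota}\subset\ker\partial_n$, the contributions cancel in pairs. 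So $\bigoplus_J\Phi_J$ factors through $\ker\partial_n$. Work over the field $\mathbf{k}$ and choose a linear complement of $\ker\partial_n$ inside $\mathrm{Cyl}$. Define $\eta_0$ by composing $\bigoplus_J\Phi_J$, viewed as a map into $\ker\partial_n$, with $i_k$. Since $\pi_J\circ\overline{\iota}\circ\pi_{12}\circ i_k$ restricted to $\ker\partial_n$ is the $J$-component projection, $\eta_0$ satisfies (b).

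Step 2 is the correction so that (a) also holds. On $S^{K+1}(\mathrm{Im}\,\overline{h}_1)$, part (i) gives
\[
(\Eval^{(n+1)}_J)_1\circ(l_{K+1}-\eta_0)=0 \quad\text{for every } J.
\]
Over a field, $S^{K+1}(\mathrm{Im}\,\overline{h}_1)$ is a direct summand of $S^{K+1}\mathfrak{C}^{(n+1)}$. Define $\eta_1 := \eta_0 + \epsilon$, where $\epsilon$ equals $l_{K+1}-\eta_0$ on that summand and vanishes on a chosen complement. Then (a) holds by construction, and (b) still holds because $\epsilon$ takes values in $\bigcap_J\ker(\Eval^{(n+1)}_J)_1$.

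The main obstacle is the well-definedness in part (i). The $\overline{h}_k$ for $k\ge2$ were chosen in $\ker\partial_n$ using only (\ref{hevhj}), so I must make sure that (\ref{liftrel}) genuinely determines $l_{K+1}$ as a function on $S^{K+1}(\mathrm{Im}\,\overline{h}_1)$, which is where injectivity of $\overline{h}_1$ is used. I must also confirm that the terms $l_l\circ(\overline{h}_{k_1},\dots,\overline{h}_{k_l})$ with $l\le K$ only involve the already-constructed $l^h_k$ on $\mathfrak{C}^{(n+1)}$, so that the induction hypothesis applies to them. I would settle this first by writing (\ref{liftrel}) in coalgebra form, $\widehat{l}\circ\widehat{\overline{h}} = \widehat{\overline{h}}\circ\widehat{l}$ truncated at order $K+1$, and then applying $(\Eval^{(n+1)}_J)_1$, which is a strict coalgebra map.
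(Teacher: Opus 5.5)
Your argument is correct and is the paper's: in (i) you apply $(\Eval^{(n+1)}_J)_1$ to (\ref{liftrel}) and combine the induction hypothesis with the $L_\infty[1]$-morphism relation for $h_J$. In (ii) you check face compatibility so that $\bigoplus_J l_{K+1}\circ(\Eval^{(n+1)}_J)_1^{\otimes K+1}$ lands in $\ker\partial_n$, and then lift it through $\ker\partial_n\subset\mathfrak{C}^{(n+1)}$. Your ``lift everywhere, then correct by $\epsilon$ on $S^{K+1}(\mathrm{Im}\,\overline{h}_1)$'' gives exactly the paper's $\eta_1$, namely $l_{K+1}$ on that summand and the lift on a complement, while making explicit two points the paper leaves implicit: the lift via $i_k$, and the injectivity of $\overline{h}_1$ that makes $l_{K+1}$ well defined.
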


\begin{proof}
\noindent (i) Applying $\left(\mathrm{Eval}_J^{(n+1)}\right)_1$ to (\ref{liftrel}), we have
\begin{equation}\label{evfle}
\begin{split}
\left(\mathrm{Eval}_J^{(n+1)}\right)_1 \circ l_{\otimes K+1} &\circ {\overline{h}_{1}}^{K+1}
= 
\pm \left(\mathrm{Eval}_J^{(n+1)}\right)_1 \circ l_1 \circ \overline{h}_{K+1}
\pm \left(\mathrm{Eval}_J^{(n+1)}\right)_1 \circ \overline{h}_{K+1} \circ \widehat{l}_1\\
&\quad \quad \quad \quad \quad + \sum_{k_1 + k_2 = K+2} \pm \left(\mathrm{Eval}_J^{(n+1)}\right)_1 \circ \overline{h}_{k_1} \circ \widehat{l}_{k_2}\\ 
&\quad \quad \quad \quad \quad + \sum_{k_1 + \cdots + k_{\ell} = K+1} \pm \left(\mathrm{Eval}_J^{(n+1)}\right)_1 \circ l_\ell \circ (\overline{h}_{k_1}, \dots, \overline{h}_{k_\ell})\\
&= \pm l_1 \circ \left(\mathrm{Eval}_J^{(n+1)}\right)_1 \circ \overline{h}_{K+1}
\pm {h}_{J,K+1} \circ \widehat{l}_1
+ \sum_{k_1 + k_2 = K+2} \pm h_{J,k_1} \circ \widehat{l}_{k_2}\\
&\quad \quad \quad \quad \quad \quad + \sum_{k_1 + \cdots + k_{\ell} = K+1} \pm l_\ell \circ \left(\mathrm{Eval}_J^{(n+1)}\right)_1^{\otimes l} \circ (\overline{h}_{k_1}, \dots, \overline{h}_{k_\ell})\\
&= \pm l_1 \circ {h}_{J,K+1} \pm {h}_{J,K+1} \circ \widehat{l}_1
+ \sum_{k_1 + k_2 = K+1} \pm {h}_{J, k_1} \circ \widehat{l}_{k_2}\\
&\quad \quad \quad \quad \quad \quad \quad \quad + \sum_{k_1 + \cdots + k_{\ell} = K+1} \pm l_\ell \circ ({h}_{J,k_1}, \dots, {h}_{J,k_\ell}).
\end{split}
\end{equation}
On the other hand, we have
\[
l_{K+1} \circ \left(\mathrm{Eval}_J^{(n+1)}\right)_1^{\otimes {K+1}} \circ \overline{h}_{1}^{\otimes {K+1}} = l_{K+1} \circ ({h}_{J,1}, \dots, {h}_{J,1}),
\]
which equals the last line of (\ref{evfle}) by the fact that $h_J$ is an $L_\infty[1]$-morphism. Thus, we have
\[
\left(\mathrm{Eval}_J^{(n+1)}\right)_1 \circ l_{K+1} \circ {\overline{h}_{1}}^{K+1} = l_{K+1} \circ \left(\mathrm{Eval}_J^{(n+1)}\right)_1^{\otimes {K+1}} \circ \overline{h}_{1}^{\otimes {K+1}},
\]
and in other words, we have
\begin{equation}\label{lkahgalgkj}
\left(\Eval^{(n+1)}_{J}\right)_1 \circ l_{K+1} = l_{K+1} \circ \left(\Eval^{(n+1)}_{J}\right)_1^{\otimes K+1}
\end{equation}
on $S^{K+1}(\mathrm{Im}\overline{h}_1).$

\noindent(ii) Since $S^{K+1}\left(\mathrm{Im}\overline{h}_1\right)$ is a direct summand of $S^{K+1}\mathfrak{C}^{(n+1)},$ from (i), it suffices to show that (b) holds for some $\eta_1$ defined on $\left(S^{K+1}\mathrm{Im}\overline{h}_1\right)^\mathrm{c}.$ Here, $(\cdots)^\mathrm{c}$ denotes the complement.
\begin{equation}\nonumber
\begin{split}
\left(\Eval_{J_1 \cap J_2}^{(n)}\right)_1 \circ l_{K+1} \circ {\left(\Eval_{J_1}^{(n+1)}\right)}_1^{\otimes K+1} & =l_{K+1} \circ {\left(\Eval_{J_1 \cap J_2}^{(n)}\right)}_1^{\otimes K+1} \circ {\left(\Eval_{J_1}^{(n+1)}\right)}_1^{\otimes K+1},\\
\left(\Eval_{J_1 \cap J_2}^{(n)}\right)_1 \circ l_{K+1} \circ {\left(\Eval_{J_2}^{(n+1)}\right)}_1^{\otimes K+1}
&=l_{K+1} \circ {\left(\Eval_{J_1 \cap J_2}^{(n)}\right)}_1^{\otimes K+1} \circ {\left(\Eval_{J_2}^{(n+1)}\right)}_1^{\otimes K+1},
\end{split}
\end{equation}
coincide for each pair $J_1 \neq J_2$ with $|J_1|=|J_2| = n+1$ and $|J_1 \cap J_2| = n$, so that we have
\[
\mathrm{Im}\left(l_{K+1} \circ {\left(\Eval_{J_1}^{(n+1)}\right)}_1^{\otimes K+1} - l_{K+1} \circ {\left(\Eval_{J_2}^{(n+1)}\right)}_1^{\otimes K+1}\right)  \in \ker \left( \mathrm{Eval}^{(n)}_{J_1 \cap J_2}\right)_1
\]
for every $J_1$ and $J_2.$ Then one can show that
\[
\sum\limits_J l_{K+1} \circ {\left(\Eval_{J}^{(n+1)}\right)}_1^{\otimes K+1}\left(\left(S^{K+1}\mathrm{Im}\overline{h}_1\right)^c\right) \in \text{ker} \partial_{n}
\]
with the definition $\partial_n := \bigoplus\limits_{\substack{J \subset \{0, \cdots, n\}, \\ |J| = n}} \left(\Eval_J^{(n)}\right)_1$ in (\ref{ppnnk}) and the relation (\ref{lkahgalgkj}).

Then from the definitions $\partial_{n+1} := \bigoplus\limits_J \left(\text{Eval}_{{J}}^{(n+1)}\right)_1 $ in (\ref{ahldlfklsa}) and $\mathfrak{C}^{(n+1)} := \mathrm{Cyl}$ in (\ref{adsgaahahhh}), and the fact that $\mathfrak{C}^{(n+1)} \supset \ker \partial_n,$ we obtain 
\[
\eta_1 \in \text{Hom}\left(\left(S^{K+1}\mathrm{Im}\overline{h}_1\right)^\mathrm{c}, \mathfrak{C}^{(n+1)}\right),
\]
satisfying
\begin{equation}\label{beta}
\partial_{n+1} \circ \eta_1 = \sum\limits_J l_{K+1} \circ {\left(\Eval_{J}^{(n+1)}\right)}_1^{\otimes K+1}\bigg|_{\left(S^{K+1}\mathrm{Im}\overline{h}_1\right)^\mathrm{c}}.
\end{equation}
Then we obtain
\begin{equation}\label{elle}
\left(\Eval^{(n+1)}_{J}\right)_1 \circ\eta_1 = l_{K+1} \circ \left(\Eval^{(n+1)}_{J}\right)_1^{\otimes K+1}\bigg|_{\left(S^{K+1}\mathrm{Im}\overline{h}_1\right)^\mathrm{c}}.
\end{equation}
by projecting (\ref{beta}) to each component $\mathfrak{C}^{(n)}_{J}.$ 
\end{proof}

Notice that $\{l^h_{\leq K+1}\} := \{l_k^h\}_{k \leq K} \cup \{\eta_1\}$ need not satisfy the $L_{\infty}[1]$-relation (\ref{quadrel}) yet. So, we consider the obstruction class
\begin{equation}\nonumber
O_{K+1}\left(\{l^h_{\leq K+1}\}\right) := \sum\limits_{\substack{k_1 +k_2 = K+2\\ k_1, k_2 \geq 2}} \pm l_{k_1}^h \circ \widehat{l}_{k_2}^h + \delta_1 (\eta_1)
\end{equation}
with the Hochschild differential $\delta_1 := l^h_1 \circ (\cdot) - (\cdot) \circ \widehat{l}^h_1$ on $\text{Hom}\left(S^{K+1}{\mathfrak{C}^{(n+1)}}, \mathfrak{C}^{(n+1)}\right).$

\begin{lem}\label{lem}
$O_{K+1}(\{l^h_{\leq K+1}\})$ satisfies:
\begin{enumerate}[label = (\roman*)]
\item $\delta_1 O_{K+1}\left(\{l^h_{\leq K+1}\}\right) = 0,$
\item $O_{K+1}\left(\{l^h_{\leq K+1}\}\right)|_{\mathrm{Im} \overline{h}_1} = 0,$
\item $\left(\mathrm{Eval}^{(n+1)}_J\right)_1 \circ O_{K+1}\left(\{l^h_{\leq K+1}\}\right) = 0$ for each $J,$
\item $\partial_{n+1 }\circ O_{K+1}\left(\{l^h_{\leq K+1}\}\right) = 0.$
\end{enumerate}
\end{lem}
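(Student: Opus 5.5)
The statement to prove is Lemma \ref{lem}, about the obstruction $O := O_{K+1}(\{l^h_{\leq K+1}\})$. Throughout I use two facts already established: $\{l^h_k\}_{k\le K}$ is an $L_K[1]$-structure on $\mathfrak{C}^{(n+1)}$ intertwined by $(\Eval^{(n+1)}_J)_1$, and $\eta_1$ satisfies (\ref{elle}). The four items are then checked in order (i), (ii), (iii), (iv), and (iv) is an immediate consequence of (iii).

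For (i) I would argue as in Lemma \ref{fasgg}(iii). Expand $\delta_1 O = l^h_1\circ O - O\circ \widehat l^h_1$. The terms $\delta_1\delta_1(\eta_1)$ vanish, since $\delta_1^2=0$ because $l^h_1\circ l^h_1=0$. The remaining quadratic terms $l^h_{k_1}\circ \widehat l^h_{k_2}$ with $k_1,k_2\ge2$ are regrouped using the $L_K[1]$-relations (\ref{quadrel}) in arities $\le K$. Concretely, the coderivation $\widehat{l}^h_{\le K}$ squares to something concentrated in arity $\ge K+2$ after projection, and the usual associativity $[\widehat l^h,[\widehat l^h,\widehat l^h]]=0$ (Jacobi for the commutator of coderivations) forces the arity-$(K+1)$ part of $l^h_1\circ O - O\circ\widehat l^h_1$ to cancel. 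This is the standard obstruction-cocycle computation and I would only indicate the grouping of terms.

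For (ii), restrict to $S^{K+1}(\mathrm{Im}\overline h_1)$ and precompose with $\overline h_1^{\otimes K+1}$, which is injective. On this subspace $\eta_1=l_{K+1}$, and $l_{K+1}$ was defined precisely by (\ref{liftrel}), so that $\{\overline h_k\}_{k\le K+1}$ is an $L_{K+1}[1]$-morphism. Substitute (\ref{liftrel}) into $O\circ\overline h_1^{\otimes K+1}$ and use the morphism relations of $\{\overline h_k\}_{k\le K}$ together with the relations (\ref{quadrel}) of $C_0$. The quadratic relation in arity $K+1$ for $l^h$ is then pulled back to the quadratic relation for $C_0$, which holds, so the result is $0$.

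For (iii), which I expect to be the main point, apply $(\Eval^{(n+1)}_J)_1$ to $O$. Since $(\Eval_J^{(n+1)})_1$ intertwines $l^h_k$ with $l_k$ on $\mathfrak{C}^{(n)}_J$ for $k\le K$, and intertwines $\eta_1$ with $l_{K+1}$ by (\ref{elle}), each term maps to the corresponding term of the arity-$(K+1)$ $L_\infty[1]$-relation of $\mathfrak{C}^{(n)}_J$, precomposed with $(\Eval_J^{(n+1)})_1^{\otimes K+1}$. The step that needs care is that $\widehat l^h_{k_2}$ is compatible with $\widehat{(\Eval)}_1$, because $\Eval$ has no higher components. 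Given that, the image is $(\text{relation of }\mathfrak{C}^{(n)}_J)\circ(\Eval_J^{(n+1)})_1^{\otimes K+1}=0$, since $\mathfrak{C}^{(n)}_J$ is a genuine $L_\infty[1]$-algebra.

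Finally, (iv) follows because $\partial_{n+1}=\bigoplus_J(\Eval_J^{(n+1)})_1$, so (iii) for every $J$ gives $\partial_{n+1}\circ O=0$.
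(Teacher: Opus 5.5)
Your proposal is correct and follows the paper's own proof. In both, (i) is the standard obstruction-cocycle computation. (ii) comes from pulling the arity-$(K+1)$ relation back along $\overline{h}$ via (\ref{liftrel}), with the lower-arity terms killed by the $L_K[1]$-relations of $\{l^h_k\}_{k\le K}$. (iii) follows from (\ref{elle}) together with the intertwining for $k\le K$ and the strictness of $\mathrm{Eval}$, and (iv) follows from (iii).

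One small imprecision in (i): $\pi_1\bigl(\widehat{l}^h_{\le K}\bigr)^2$ vanishes on $S^{\le K}\mathfrak{C}^{(n+1)}$. Its component on $S^{K+1}\mathfrak{C}^{(n+1)}$ is exactly the quadratic part of $O$; it is the index sum $k_1+k_2$, not the input arity, that is $\ge K+2$. This is precisely what your Jacobi argument uses, so the argument is unaffected.
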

\begin{proof}
(i) follows from a straightforward computation. For (ii), we obtain
\[
\begin{split}
0 = \sum_{\ell \leq K} &\sum_{\substack{k_1 + \cdots + k_\ell = K+1, \\ m_1 + m _2 = \ell+1}} l_{m_1}^h \circ \widehat{l}_{m_2}^{{h}} \circ (\overline{h}_{k_1}, \cdots, \overline{h}_{k_\ell})\\
&+\sum_{\substack{m_1 + m_2 = K+2, \\ m_1, m_2 \geq 2}} l_{m_1}^h \circ \widehat{l}_{m_2}^{{h}} \circ (\overline{h}_1, \cdots, \overline{h}_1)  + \delta_1({\eta}_1) \circ (\overline{h}_1, \cdots, \overline{h}_1)
\end{split}
\]
by applying $\l_{(\cdots)}$ to the left of (\ref{liftrel}) and taking the sum properly. We also observe that all the terms except the case of $k_1 = \cdots = k_\ell = 1$ must be zero, as $\{l_k^h\}_{k \leq K}$ is an $L_K[1]$-algebra by the induction hypothesis. Then we are left with the terms:
\[
0 = \sum_{\substack{m_1 + m_2 = K+2, \\ m_1, m_2 \geq 2}} l_{m_1}^h \circ \widehat{l}_{m_2}^{{h}} \circ (\overline{h}_1, \cdots, \overline{h}_1) + \delta_1 (\eta_1) \circ (\overline{h}_1, \cdots, \overline{h}_1),
\]
which says that $O_{k+1}\left(\{l^h\}\right)\big|_{\mathrm{Im}\overline{h}_1} = 0$.
(iv) and (v) follow from (\ref{elle}) and (iv), respectively.

\end{proof}
Consider the chain map
\[
\begin{split}
\Xi : \text{Hom}\left(S^{K+1}{\mathfrak{C}^{(n+1)}},\ker \partial_{n+1}\right) &\rightarrow \text{Hom}\left(S^{K+1}({\mathrm{Im}\overline{h}_1}), \ker \partial_{n+1}\right), \\
\xi &\mapsto \xi |_{\mathrm{Im}\overline{h}_1},
\end{split}
\]
where both sides are equipped with the differential $\delta_1.$ Since $\overline{h}_1$ is a quasi-isomorphism. Then we have:

\begin{lem}
$\Xi$ is also a quasi-isomorphism.
\end{lem}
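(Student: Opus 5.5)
The plan is to reduce the claim to the statement that restriction along a quasi-isomorphic inclusion induces a quasi-isomorphism on Hom-complexes over a field. The only input is that $\overline{h}_1 : C_0 \to \mathfrak{C}^{(n+1)}$ is an injective quasi-isomorphism, shown via the short exact sequence (\ref{sescckk}). Write $A := \mathrm{Im}\overline{h}_1 \subset \mathfrak{C}^{(n+1)} =: B$ and $M := \ker \partial_{n+1}$. The Hochschild differential $\delta_1 = l_1\circ(\cdot) \mp (\cdot)\circ \widehat{l}_1$ on $\mathrm{Hom}(S^{K+1}B, M)$ is the usual internal Hom differential. Here $S^{K+1}B$ carries the differential induced from $l_1$ on $B$, and $M$ carries the restriction of $l_1$; $M$ is a subcomplex because $\partial_{n+1}$ is a chain map. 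So $\Xi$ is precomposition with the chain map $S^{K+1}(A) \hookrightarrow S^{K+1}(B)$, and it suffices to prove two things: this inclusion is a quasi-isomorphism, and $\mathrm{Hom}(-,M)$ preserves quasi-isomorphisms.

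\textbf{Step 1: $S^{K+1}A \hookrightarrow S^{K+1}B$ is a quasi-isomorphism.} Since we work over a field, choose a splitting $B = A \oplus Q$ as graded vector spaces. The quotient $Q \cong B/A$ is acyclic, because $A \to B$ is a quasi-isomorphism; by (\ref{sescckk}), $Q$ is isomorphic to the cone of $\mathrm{id}_{\ker\partial_n}$. We may in fact pick $Q$ to be a subcomplex: the cokernel is a cone, hence contractible, so the short exact sequence of complexes splits as complexes, and we realize $Q$ as a complementary subcomplex. Then
\[
S^{K+1}B \cong \bigoplus_{a+b=K+1} S^aA \otimes S^bQ
\]
as complexes. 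Over a field of characteristic zero, $S^b Q$ is a direct summand of $Q^{\otimes b}$ via the symmetrization idempotent, so it is acyclic for $b\ge 1$ by K\"unneth. Hence only the $b=0$ summand $S^{K+1}A$ contributes to cohomology. I expect to need characteristic zero here; it is already implicit in the $1/t!$ coefficients in (\ref{frel}).

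\textbf{Step 2: Hom into $M$.} Because the splitting is by subcomplexes, $\Xi$ is a split surjection of complexes. Its kernel is $\mathrm{Hom}\big(\bigoplus_{b\ge1} S^aA\otimes S^bQ,\, M\big)$. A contracting homotopy $s$ of $Q$, coming from the cone structure, induces a contraction of each $S^aA\otimes S^bQ$ with $b\ge 1$: symmetrize $\mathrm{id}\otimes s\otimes \mathrm{id}$ and divide by $b$. Precomposition with this contraction then contracts the kernel complex. Since the kernel is acyclic, the long exact sequence in cohomology shows $\Xi$ is a quasi-isomorphism. Working with explicit contractions, rather than with Hom of quasi-isomorphisms in general, avoids any issue with unbounded complexes, because Hom out of a contractible complex is always contractible.

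The main obstacle is Step 1's requirement that the complement $Q$ be a subcomplex with an explicit contraction compatible with symmetric powers. I would handle it directly from the mapping cylinder description (\ref{adsgaahahhh}). The subspace $\{(u,v,0)\}$ is a subcomplex isomorphic, via $g_{12}$, to $\mathrm{Cone}(\mathrm{id}_{\ker\partial_n})$, and it is complementary to $\mathrm{Im}\overline{h}_1 = \{(\overset{\circ}{h}_1(w),0,w)\}$. Its contraction $s(u,v,0)=(0,u,0)$ satisfies $ds+sd=\mathrm{id}$ up to sign, which supplies everything Step 2 needs.
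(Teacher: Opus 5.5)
Your argument is correct and follows the same strategy as the paper. You split $\mathfrak{C}^{(n+1)}=\mathrm{Im}\overline{h}_1\oplus Q$ with $Q$ acyclic, decompose $S^{K+1}\mathfrak{C}^{(n+1)}$ accordingly, and show that the part of the Hom complex involving at least one $Q$-factor is acyclic; your $\ker\Xi$ is the paper's cokernel of the extension-by-zero map $\Psi$. Your execution is in fact tighter than the paper's: taking $Q=\{(u,v,0)\}$ to be a subcomplex is exactly what makes extension by zero a chain map, which the paper uses without comment, and your explicit contraction $\frac{1}{b}\sum_i s_i$ replaces the paper's appeal to homology of symmetric products and K\"unneth on dual spaces (characteristic zero, implicit in the paper's $1/t!$ coefficients, is needed in both versions).
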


\begin{proof}
Consider the following map in the opposite direction
\begin{equation}\nonumber
\begin{split}
\Psi : \text{Hom}\left(S^{K+1}({\mathrm{Im}\overline{h}_1}), \ker \partial_{n+1}\right) &\rightarrow \text{Hom}\left(S^{K+1}{\mathfrak{C}^{(n+1)}},\ker \partial_{n+1}\right),\\
 \xi \mapsto \overline{\xi},
\end{split}
\end{equation}
where $\overline{\xi}$ is the extension of $\xi$ that sets zero on the complement $(S^{K+1}\mathrm{Im}\overline{h}_1)^\mathrm{c}.$ We observe that $\Xi$ is injective and that it is a left inverse to $\Psi.$ 

Note $\Psi$ is injective. We claim that the quotient complex
\begin{equation}\label{agkjak}
\begin{split}
\text{Hom}&\left(S^{K+1}{\mathfrak{C}^{(n+1)}},\ker \partial_{n+1}\right)/\mathrm{Im} \Psi \simeq \left( \left(S^{K+1}{\mathfrak{C}^{(n+1)}}\right)^{*} \otimes \ker \partial_{n+1} \right) /\mathrm{Im} \Psi\\
& \simeq \left( S^{K+1}\left({\mathfrak{C}^{(n+1)}}\right)^{*} \otimes \ker \partial_{n+1} \right) /\mathrm{Im} \Psi.
\end{split}
\end{equation}
Using the decomposition
\[
{\mathfrak{C}^{(n+1)}} \simeq \frac{\mathfrak{C}^{(n+1)}}{\mathrm{Im}\overline{h}_1} \oplus \mathrm{Im}\overline{h}_1
\]
over a field, we further have
\begin{equation}\label{gagfagdsg}
\begin{split}
(\ref{agkjak})& \simeq \left( S^{K+1}\left(\frac{\mathfrak{C}^{(n+1)}}{\mathrm{Im}\overline{h}_1} \oplus \mathrm{Im}\overline{h}_1
\right)^* \otimes \ker \partial_{n+1}\right)/\mathrm{Im} \Psi\\
& \simeq \left( S^{K+1}\left(\left(\frac{\mathfrak{C}^{(n+1)}}{\mathrm{Im}\overline{h}_1}\right)^* \oplus \left(\mathrm{Im}\overline{h}_1\right)^*
\right) \otimes \ker \partial_{n+1}\right)/\mathrm{Im} \Psi\\
& \simeq \bigoplus\limits_{i > 0} S^i\left(\frac{\mathfrak{C}^{(n+1)}}{\mathrm{Im}\overline{h}_1}\right)^* \otimes \left( \cdots \right) \oplus \left(\left({\mathrm{Im}\overline{h}_1}\right)^* \otimes \ker \partial_{n+1}\right)/\mathrm{Im} \Psi\\
&\simeq \bigoplus\limits_{i > 0} S^i\left(\frac{\mathfrak{C}^{(n+1)}}{\mathrm{Im}\overline{h}_1}\right)^* \otimes \left( \cdots \right) \oplus \mathrm{Hom}\left({\mathrm{Im}\overline{h}_1}, \ker \partial_{n+1}\right)/\mathrm{Im} \Psi\\
&\simeq \bigoplus\limits_{i > 0} S^i\left(\frac{\mathfrak{C}^{(n+1)}}{\mathrm{Im}\overline{h}_1}\right)^* \otimes \left( \cdots \right)
\end{split}
\end{equation}
for some component $(\cdots).$ 

Denote by $(\cdots)^{S_i}$ the set of coinvariants for the symmetric group action of $S_i$. By the standard results on the homology of symmetric products (cf.\ \cite[Subsection 6.1]{Weibel}), we observe that over a field
\[
H_*\left(S^i\left(\frac{\mathfrak{C}^{(n+1)}}{\mathrm{Im}\overline{h}_1}\right)^*\right) \simeq \left(H_*\left(\frac{\mathfrak{C}^{(n+1)}}{\mathrm{Im}\overline{h}_1}\right)^*\right)^{S_i} \simeq 0,
\]
since $\mathfrak{C}^{(n+1)}/\mathrm{Im}\overline{h}_1$ (and hence $\left(\mathfrak{C}^{(n+1)}/\mathrm{Im}\overline{h}_1\right)^*$) is acyclic for quasi-isomorphic $\overline{h}_1$. By applying the K\"{u}nneth formula over a field to (\ref{gagfagdsg}), the desired quasi-isomorphism follows. We conclude that $\Psi$ is a quasi-isomorphism, and hence so is its left inverse $\Xi$.
\end{proof}

By Lemma \ref{lem} (ii) and (iv), $O_{K+1}\left(\{l^h_{\leq K+1}\}\right)$ is contained in the sub-chain complex ker$\Xi,$ which is acyclic, so there exists $\eta_2 \in$ Hom$\left(S^{K+1}{\mathfrak{C}^{(n+1)}}, \ker \partial_{n+1}\right)$ such that $O_{K+1}\left(\{l^h_{\leq K+1}\}\right) = \delta_1 (-\eta_2).$ We denote $l_{K+1}^h : = \eta_1+\eta_2.$ Then one can verify that the family $\{l_{k}^h\}_{k \leq K+1}$ satisfies the $L_{K+1}[1]$-relation. Thus we have constructed an $L_{\infty}[1]$-algebra $\left(\mathfrak{C}^{(n+1)}, \{l^{h}_{k}\}_{k \geq 1}\right)$ and an $L_{\infty}[1]$-morphism $\overline{h} : C_0 \rightarrow \mathfrak{C}^{(n+1)}$ with the property $\left(\mathrm{Eval}^{(n+1)}_J\right)_1 \circ \overline{h} = h_{J}$ from (\ref{hevhj}) and the induction hypothesis. 

\begin{claim}
$\left(\mathfrak{C}^{(n+1)}, \{l^{h}_{k}\}_{k \geq 1}, \left\{\mathrm{Eval}^{(n+1)}_J\right\}_J, \mathrm{Incl}^{(n+1)}\right)$ is a model of $\Delta^{n+1} \times C.$
\end{claim}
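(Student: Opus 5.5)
The plan is to check the axioms (i)--(vi) of Definition \ref{hhtp} for $\mathfrak{C}^{(n+1)}=\mathrm{Cyl}$ one at a time, using the construction just given.

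Axioms (i) and (ii) are inherited from the given data. Each $\mathfrak{C}^{(n)}_J$ is, by hypothesis, a model of $\Delta^n\times C$. The compatibility $(\mathfrak{C}^{(n)}_J)_{J'}=(\mathfrak{C}^{(n)}_{J'})_J=\mathfrak{C}^{(n-1)}_{J\cap J'}$ is part of the setup, because the $h_J$ are compared through the common faces $\Eval^{(n)}_{J\cap J'}$ in (\ref{adsfdsfaag}).

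Axiom (iii) holds by construction, since we set $(\Eval^{(n+1)}_J)_{k\ge2}=0$ and defined $\Incl^{(n+1)}$ only through its linear part. We still have to check that $\Incl^{(n+1)}$ and $\Eval^{(n+1)}_J$ are genuine $L_\infty[1]$-morphisms for $l^h$. For $\Eval^{(n+1)}_J$ this is exactly the relation $(\Eval^{(n+1)}_J)_1\circ l^h_k=l_k\circ(\Eval^{(n+1)}_J)_1^{\otimes k}$. That relation is (\ref{elle}) for $\eta_1$, and the correction $\eta_2$ takes values in $\ker\partial_{n+1}$, so it dies under every $(\Eval^{(n+1)}_J)_1$. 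For $\Incl^{(n+1)}$ I would arrange during the inductive construction that $l^h_k$ restricted to $S^k(\mathrm{Im}\,\Incl^{(n+1)}_1)$ equals $\Incl^{(n+1)}_1\circ l_k$. This is consistent with (\ref{elle}) because $\Eval^{(n)}_J\circ\Incl^{(n)}_J$ is compatible with the structures, and the obstruction argument (Lemma \ref{lem}) goes through with $\mathrm{Im}\,\overline h_1$ replaced by $\mathrm{Im}\,\overline h_1+\mathrm{Im}\,\Incl^{(n+1)}_1$. This is where I expect to have to revisit the earlier step rather than merely verify.

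Axiom (v) is the identity $\Eval^{(n+1)}_J\circ\Incl^{(n+1)}=\Incl^{(n)}_J$. Since $\Incl^{(n+1)}_1=i_k\circ\bigoplus_J\Incl^{(n)}_{J,1}$ and $\pi_{12}\circ i_k$ lands in the first summand, where $\overline\iota$ restricts to $\iota$, this reduces to $\pi_J\circ\iota\circ\bigoplus_{J'}\Incl^{(n)}_{J',1}=\Incl^{(n)}_{J,1}$, which is immediate.

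Axiom (iv), the quasi-isomorphism property, is the main obstacle. Since $\overline h_1$ is a quasi-isomorphism, the composite $(\Eval^{(n+1)}_J)_1\circ\overline h_1=h_{J,1}$ by (\ref{hjipj}). Each $h_{J,1}$ is a quasi-isomorphism: composing it with the vertex evaluations gives $f_j$, which is a quasi-isomorphism, and the vertex evaluations are quasi-isomorphisms by induction on (iv) for the lower models. By two-out-of-three, $(\Eval^{(n+1)}_J)_1$ is therefore a quasi-isomorphism. For $\Incl^{(n+1)}$ we use $\Eval^{(n+1)}_J\circ\Incl^{(n+1)}=\Incl^{(n)}_J$, where the right side is a quasi-isomorphism by the inductive hypothesis; two-out-of-three again gives that $\Incl^{(n+1)}$ is a quasi-isomorphism. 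For $n=0$ the same argument applies with $\Incl^{(0)}=\mathrm{id}$.

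Finally, axiom (vi) requires $\ker\partial_n=\mathrm{Im}\,\partial_{n+1}$. We have $\partial_{n+1}=\overline\iota\circ\pi_{12}$ composed with the $J$-projections. Its image equals $\mathrm{Im}\,\overline\iota=\iota(\ker\partial_n)$, because $\pi_{12}$ is surjective. Hence exactness holds at $\bigoplus_J\mathfrak{C}^{(n)}_J$. The lower terms form a chain complex by axiom (vi) of the faces, and the sign relation in (\ref{ppnnk}) together with the commuting square in Definition \ref{hhtp}(vi) gives $\partial_n\circ\partial_{n+1}=0$.
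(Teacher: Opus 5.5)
Your verification follows the paper's proof of the Claim step for step:
\begin{itemize}
\item (i)--(ii) are inherited;
\item (iv) for $\Eval^{(n+1)}_J$ is two-out-of-three applied to (\ref{hjipj});
\item (iv) for $\Incl^{(n+1)}$ then follows via (v);
\item (vi) comes from $\mathrm{Im}\,\overline{\iota}=\iota(\ker\partial_n)$.
\end{itemize}
You are right that (iii) tacitly requires $\Eval^{(n+1)}_J$ and $\Incl^{(n+1)}$ to be $L_\infty[1]$-morphisms for $l^h$. The paper never checks this. Your argument for $\Eval^{(n+1)}_J$ (the defining property of $\eta_1$ together with $\eta_2\in\ker\partial_{n+1}$) is fine.

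\textbf{The patch for $\Incl^{(n+1)}$ does not work.} The obstruction is killed by the lemma that the restriction map $\Xi$ is a quasi-isomorphism. That lemma uses that the prescribed subcomplex $\mathrm{Im}\,\overline h_1$ includes quasi-isomorphically into $\mathfrak{C}^{(n+1)}$, so that the quotient is acyclic. But $\mathrm{Im}\,\overline h_1\cap\mathrm{Im}\,\Incl^{(n+1)}_1=0$ (compare third coordinates). So your enlarged subcomplex has cohomology $H(C_0)\oplus H(C)$, whereas $\mathrm{Im}\,\overline h_1$ alone already accounts for all of $H(\mathfrak{C}^{(n+1)})$. For $H(C)\neq0$ the inclusion is therefore not a quasi-isomorphism, the relative Hom complex need not be acyclic, and a residual obstruction class remains. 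Asserting that the argument ``goes through'' skips exactly what has to be proved.

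\textbf{The more serious gap is (iv), whose argument is the same as the paper's; no verification of the axioms can succeed here.} The map $\pi_{12}$ lands in $\mathrm{Cone}(\mathrm{id}_{\ker\partial_n})$, which is acyclic. Hence $(\Eval^{(n+1)}_J)_1=\pi_J\circ\overline\iota\circ\pi_{12}$ is zero on cohomology however $\overline\iota$ is chosen. Likewise $\Incl^{(n+1)}_1=i_k\circ\bigoplus_J\Incl^{(n)}_{J,1}$ factors through the same acyclic summand of $\mathfrak{C}^{(n+1)}$, since the $C_0$-coordinate is decoupled in the differential. Then (\ref{hjipj}) would force $h_{J,1}$ to vanish on cohomology. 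So the inputs of your two-out-of-three argument contradict each other as soon as $H(C)\neq0$.

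The culprit is $\overline\iota$. Since $x\mapsto(x,0)$ is a chain map $\ker\partial_n\to\mathrm{Cone}(\mathrm{id}_{\ker\partial_n})$, a chain map $\overline\iota$ extending $\iota$ exists only if $\iota$ is null-homotopic. Indeed, writing $\overline\iota(x,y)=\iota(x)+\phi(y)$, the chain condition on $(0,y)$ says precisely that $\phi$ is a null-homotopy of $\iota$. But $\iota\circ\overset{\circ}{h}_1=\bigoplus_J h_{J,1}$ is injective on cohomology, so $\iota$ is not null-homotopic.

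Your remark that the same argument works for $n=0$ shows this is not a repairable detail. There the Claim, together with $\Eval^{(1)}_{\{j\}}\circ\overline h=f_j$, would make any two quasi-isomorphisms homotopic. But homotopic morphisms induce the same map on cohomology, because $(\Eval_0)_1$ and $(\Eval_1)_1$ are both left inverses of the quasi-isomorphism $\Incl_1$. For instance, take a one-dimensional $C$ with all $l_k=0$: $f_0=\mathrm{id}_C$ and $f_1=\lambda\,\mathrm{id}_C$ with $\lambda\neq0,1$ are quasi-isomorphisms that are not homotopic.

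A correct proof therefore needs a genuinely different construction of $\mathfrak{C}^{(n+1)}$, one whose evaluation maps detect cohomology. For $n=0$ it also needs an extra hypothesis, such as that the $f_j$ induce the same map on cohomology.
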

\begin{proof}
The axioms (i), (ii), and (iii) in Definition \ref{hhtp} obviously hold.

\noindent (iv) We know $\overline{h}_1 : C_0 \rightarrow \mathfrak{C}^{(n+1)}$ is a quasi-isomorphism, and so is $h_{J,1}$ by the induction hypothesis. Thus, so is $\left(\text{Eval}_{{J}}^{(n+1)}\right)_{1}$ by the relation (\ref{hjipj}).
$\text{Incl}^{(n+1)}$ being a quasi-isomorphism follows from (iv).

\noindent (v) We have
\[
\begin{split}
\left(\text{Eval}_{{J}}^{(n+1)}\right)_1 \circ \text{Incl}_1^{(n+1)} &= \pi_{{J}} \circ \overline{\iota} \circ \pi_{12} \circ i_k \circ \sum\limits_{{J}} \text{Incl}_{{J,1}}^{(n)}\\
& = \pi_{{J}} \circ {\iota} \circ \sum\limits_{{J}} \text{Incl}_{{J,1}}^{(n)} = \text{Incl}_{{J,1}}^{(n)}.
\end{split}
\]

\noindent (vi)
We have
\[
\begin{split}
\ker \partial_n = \overline{\iota} \circ \pi_{12} \left(\mathfrak{C}^{(n+1)}\right) &= \bigoplus\limits_J (\pi_J \circ \overline{\iota} \circ \pi_{12})\left(\mathfrak{C}^{(n+1)}\right)\\ 
&= \bigoplus\limits_J \left(\mathrm{Eval}^{(n+1)}_J\right)_1 \left( \mathfrak{C}^{(n+1)}  \right) = \mathrm{Im}\partial_{n+1}.
\end{split}
\]
\end{proof}
This completes the proof of Proposition \ref{pphhe}.
\end{proof}

\appendix

\section{Proof of Theorem \ref{th}}

This section is devoted to the proof of Theorem \ref{th}. 

We begin with a lemma.
\begin{lem}
There exists a chain map $\mathfrak{F}_1 : \mathfrak{C}_1 \rightarrow \mathfrak{C}_2$ over $f_1$ that is compatible with $\mathrm{Eval}_j, \ j =0,1$ and $\mathrm{Incl},$
\[
\begin{tikzcd}
C_1 \arrow[swap]{d}{f_1} \arrow{r}{\Incl_1} &  \mathfrak{C}_1 \arrow[swap]{d}{\mathfrak{F}_1}\\
C_2 \arrow{r}{\Incl_1} & \mathfrak{C}_2.
\end{tikzcd}
\]
\end{lem}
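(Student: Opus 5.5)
We need to construct a chain map $\mathfrak{F}_1:\mathfrak{C}_1\to\mathfrak{C}_2$ satisfying
\[
(\Eval_j)_1\circ \mathfrak{F}_1 = f_1\circ(\Eval_j)_1 \quad (j=0,1), \qquad \mathfrak{F}_1\circ\Incl_1=\Incl_1\circ f_1 .
\]
We work over the field $\mathbf{k}$ throughout, so every subcomplex has a graded vector-space complement, though not necessarily a subcomplex one.

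\textbf{Step 1: reduce to a lifting problem.} Write $E=(\Eval_0)_1\oplus(\Eval_1)_1:\mathfrak{C}_i\to C_i\oplus C_i$; by (iv) of Definition \ref{mdl1} it is surjective. Set $K_i=\ker E\subset\mathfrak{C}_i$, a subcomplex, so we have a short exact sequence of complexes
\[
0\to K_i\to\mathfrak{C}_i\xrightarrow{E} C_i\oplus C_i\to 0 .
\]
By (iii), $E\circ\Incl_1$ is the diagonal $\Delta:C_i\to C_i\oplus C_i$, which is a quasi-isomorphism. By (ii), $\Incl_1$ is also a quasi-isomorphism. In cohomology, $H(E)$ is therefore identified with the diagonal $H(C_i)\to H(C_i)\oplus H(C_i)$, which is injective with cokernel $H(C_i)$. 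The long exact sequence then gives $H(K_i)\cong H(C_i)$ shifted by one (the suspension of $C_i$ up to quasi-isomorphism), not $0$. So a naive "acyclic kernel" lifting argument fails, and we must use $\Incl$ to absorb this cohomology.

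\textbf{Step 2: split $\mathfrak{C}_1$ as $\Incl_1(C_1)\oplus Q_1$.} Since $\Incl_1$ is injective (it has left inverse $(\Eval_0)_1$), set $\mathfrak{F}_1:=\Incl_1\circ f_1\circ(\Eval_0)_1$ on $\Incl_1(C_1)$, which is forced by the $\Incl$ condition. The quotient $Q_1=\mathfrak{C}_1/\Incl_1(C_1)$ is acyclic, because $\Incl_1$ is a quasi-isomorphism. Next, $(\Eval_0)_1$ restricted to $\ker(\Eval_0)_1$ is zero, and $\mathfrak{C}_1=\Incl_1(C_1)\oplus\ker(\Eval_0)_1$ as complexes, since $\Incl_1\circ(\Eval_0)_1$ is a chain projection. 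Hence $\ker(\Eval_0)_1\cong Q_1$ is acyclic. So it suffices to define $\mathfrak{F}_1$ on the acyclic subcomplex $A_1:=\ker(\Eval_0)_1$, landing in $A_2:=\ker(\Eval_0)_2$, as a chain map with
\[
(\Eval_1)_1\circ\mathfrak{F}_1=f_1\circ(\Eval_1)_1 \quad\text{on } A_1 .
\]
The $\Eval_0$ condition is then automatic on $A_1$, and on $\Incl_1(C_1)$ both conditions hold by (iii).

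\textbf{Step 3: lift on the acyclic piece.} Restricted to $A_i$, the map $(\Eval_1)_1:A_i\to C_i$ is surjective, by (iv) together with the splitting. Its kernel $K_i$ is acyclic, since $A_i$ is acyclic and $(\Eval_1)_1|_{A_i}$ is a quasi-isomorphism $0\to0$ only up to the long exact sequence. Checking this precisely: $H(A_i)=0$ gives $H(K_i)\cong H(C_i)[-1]$, so $K_i$ is \emph{not} acyclic. The main obstacle is therefore here. To overcome it, use that $A_1$ is acyclic over a field, hence contractible, hence a direct sum of cones $D\oplus dD$. Define $\mathfrak{F}_1$ on a graded complement $D$ of the cycles $Z(A_1)=dA_1$ by choosing, for a basis element $x$, any lift $y\in A_2$ with $(\Eval_1)_1 y=f_1(\Eval_1)_1x$, which is possible by surjectivity. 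Then set $\mathfrak{F}_1(dx):=dy$. This is a well-defined chain map because $d:D\to dA_1$ is an isomorphism. The $\Eval_1$ compatibility on $dA_1$ follows since $(\Eval_1)_1$ and $f_1$ are chain maps. Thus the cohomology of $K_i$ is irrelevant: contractibility of $A_1$ lets us lift freely.

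Finally, assemble $\mathfrak{F}_1$ on $\Incl_1(C_1)\oplus A_1$ and verify the three identities componentwise. The only delicate point is Step 3, namely choosing lifts on a complement of the boundaries. This uses only surjectivity of $(\Eval_1)_1|_{A_2}$, which follows from (iv) applied to pairs $(0,c)$, and the field hypothesis.
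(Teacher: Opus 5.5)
Your argument is correct, but it takes a different route from the paper's.

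\textbf{The paper's route.} The paper starts from the candidate $\mathfrak{F}_1'=\Incl_1\circ f_1\circ(\Eval_1)_1$ on all of $\mathfrak{C}_1$. This candidate already satisfies the $\Incl$ condition. The paper then measures the failure of $\Eval$-compatibility by the error term
\[
\mathrm{Err1}=\bigl((\Eval_j)_1\circ\mathfrak{F}_1'-f_1\circ(\Eval_j)_1\bigr)_{j=0,1},
\]
which is a $\delta_1$-cocycle vanishing on $\Incl_1(C_1)$. The written proof stops after recording these two properties. The evident completion, mirroring the $L_K[1]$ step later in the appendix, has three steps:
\begin{enumerate}
\item Use Lemma \ref{lem:fooo-lemma} to write $\mathrm{Err1}=\delta_1 B$ with $B\circ\Incl_1=0$.
\item Lift $B$ through the surjection $(\Eval_0)_1\oplus(\Eval_1)_1$ to some $\widetilde B$ that also vanishes on $\Incl_1(C_1)$.
\item Set $\mathfrak{F}_1:=\mathfrak{F}_1'-\delta_1\widetilde B$.
\end{enumerate}

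\textbf{Your route.} You split $\mathfrak{C}_1=\Incl_1(C_1)\oplus\ker(\Eval_0)_1$ as complexes, using the chain idempotent $\Incl_1\circ(\Eval_0)_1$. The second summand is acyclic, hence contractible over the field. You then lift $f_1\circ(\Eval_1)_1$ directly on cone generators through the map $(\Eval_1)_1\colon\ker(\Eval_0)_1\to C_2$ (on $\mathfrak{C}_2$), which is surjective by (iv). Defining $\mathfrak{F}_1(dx):=d\mathfrak{F}_1(x)$ on the boundaries then gives a chain map, and all three compatibilities hold componentwise.

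\textbf{What each buys.} Your approach is more elementary and self-contained: it needs no relative homotopy-extension result like Lemma \ref{lem:fooo-lemma}, and it actually finishes the construction rather than leaving the correction step implicit. The paper's error-correction format has the advantage that it is exactly the pattern reused in the $L_K[1]$ induction ($\mathrm{Err}_K^{(j)}$, $\mathrm{Cor1}$, $\mathrm{Cor2}$). There the relevant objects are Hom complexes out of $S^K\mathfrak{C}_1$, and a direct splitting of $\mathfrak{C}_1$ alone does not immediately apply.

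\textbf{Write-up.} Delete Step 1 and the retracted sentence in Step 3 claiming that $K_i$ is acyclic. Neither is used, and the latter is false, as you yourself note.
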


\begin{proof}
Denote $\mathfrak{F}_1' = \Incl_1 \circ f_1 \circ (\Eval_{s=1})_1$ and consider
\begin{equation}\nonumber
\text{\rm Err1} = (\text{\rm Err1}_0, \text{\rm Err1}_1) \in \text{Hom}(\mathfrak{C}_1, C_2) \oplus \text{Hom}(\mathfrak{C}_1, C_2),
\end{equation}
where we write
\begin{equation}\nonumber
\text{\rm Err1}_j := (\Eval_j)_1 \circ \mathfrak{F}'_1 - f_1 \circ (\Eval_j)_1, \ j=0,1.
\end{equation}
Then we can readily verify that $\delta_1 \text{\rm Err1} =0 $ and $\text{\rm Err1} \circ \Incl_1 = 0.$ Here $\delta_1$ is the coboundary map on $\text{Hom}(\mathfrak{C}_1, C_2) \oplus \text{Hom}(\mathfrak{C}_1, C_2),$ induced by $l_1$ maps on $\mathfrak{C}_1$ and $C_2.$
\end{proof}

We also quote the following general algebraic lemma from \cite{FOOO1}.

\begin{lem}\cite[Lemma 4.4.3]{FOOO1}\label{lem:fooo-lemma}
Let $R$ be a coefficient ring. Consider cochain complexes
$(\overline D_j,d)$, $j=1,\, 2,\, 3$ and
 a cochain homomorphism $i : \overline D_1 \to \overline D_2$
 over $R$. Suppose that $i$ is a cochain homotopy equivalence that
is split injective as an $R$-module homomorphism.
Then for $A \in Hom_R(\overline D_2,\overline D_3)$ such that
$dA = 0$, and $A\circ i = 0,$ there exists $B \in Hom_R(\overline D_2,\overline D_3)$ such
that $dB = A$ and $B\circ i = 0$.
\end{lem}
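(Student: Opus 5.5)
The statement to prove is Lemma \ref{lem:fooo-lemma}. The plan is to use the splitting of $i$ to reduce to a statement about an acyclic complex. Since $i$ is split injective, choose an $R$-module splitting $\overline D_2 = i(\overline D_1)\oplus Q$ and let $p:\overline D_2\to Q$ be the projection. Set $\overline{Q}:=\overline D_2/i(\overline D_1)$, with the differential induced from $d$. As graded modules, $Q\cong \overline Q$.

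The key observation is that restriction along the quotient map $q:\overline D_2\to\overline Q$ identifies
\[
\{A\in Hom_R(\overline D_2,\overline D_3) : A\circ i=0\}
\]
with $Hom_R(\overline Q,\overline D_3)$, and that this identification is compatible with the differentials. Indeed, the subcomplex of $Hom_R(\overline D_2,\overline D_3)$ consisting of maps vanishing on $i(\overline D_1)$ is a subcomplex, because $i$ is a chain map. A map vanishing on $i(\overline D_1)$ is exactly a map factoring through $q$, and $q$ is a chain map. Under this identification, $A$ corresponds to a cocycle $\bar A\in Hom_R(\overline Q,\overline D_3)$. It therefore suffices to show that $Hom_R(\overline Q,\overline D_3)$ is acyclic. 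Given that, we write $\bar A=d\bar B$ and put $B:=\bar B\circ q$, which satisfies $dB=A$ and $B\circ i=0$.

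To prove acyclicity, we first show that $\overline Q$ is contractible, i.e. that $\mathrm{id}_{\overline Q}$ is null-homotopic. This is the main obstacle: it requires combining the homotopy inverse $g:\overline D_2\to\overline D_1$ of $i$ with the module splitting. The homotopies give $g i-\mathrm{id}=dK+Kd$ and $i g-\mathrm{id}=dL+Ld$. Since $i$ is a split injective homotopy equivalence, the mapping cone $\mathrm{Cone}(i)$ is contractible, because $i$ is a homotopy equivalence. Split injectivity also makes
\[
0\to\overline D_1\to\overline D_2\to\overline Q\to0
\]
degreewise split. For a degreewise split short exact sequence, the natural map $\mathrm{Cone}(i)\to\overline Q$ is a homotopy equivalence. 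One writes down the explicit homotopy inverse $\overline Q\to\mathrm{Cone}(i)$ using the splitting $s$ of $q$ together with the correction term $\tau=ds-sd\colon \overline Q\to i(\overline D_1)$. Hence $\overline Q$ is contractible, with a contracting homotopy $S$ satisfying $dS+Sd=\mathrm{id}_{\overline Q}$.

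Finally, $Hom_R(-,\overline D_3)$ preserves chain homotopies, so $S^*$ is a contracting homotopy of $Hom_R(\overline Q,\overline D_3)$. Explicitly, we take $\bar B=\pm\bar A\circ S$. Then $d\bar B=\pm\bar A\circ(dS+Sd)=\bar A$, using $d\bar A=0$, with the sign fixed by the grading convention on the Hom differential.

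In the paper's setting over a field $R=\mathbf{k}$, split injectivity is automatic and quasi-isomorphism equals homotopy equivalence. The contractibility step then also follows more simply: $\overline Q$ is acyclic by the long exact sequence, and acyclic complexes over a field are contractible.
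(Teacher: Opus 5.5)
The paper gives no proof of this lemma. It is quoted from \cite[Lemma 4.4.3]{FOOO1} and used as a black box in Appendix A, so there is no in-paper argument to compare with.

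Your proof is correct, and it is the standard argument. The maps killing $i(\overline D_1)$ are identified, via $q^*$, with the complex $Hom_R(\overline D_2/i(\overline D_1),\overline D_3)$. That complex is contractible because the quotient $\overline Q$ is contractible, which holds because $i$ is a degreewise split homotopy equivalence.

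One simplification: you do not need the full statement that $\mathrm{Cone}(i)\to\overline Q$ is a homotopy equivalence. Use the convention $d(x,y)=(-dx,\,ix+dy)$ on $\mathrm{Cone}(i)$. Define $\sigma(c)=(\tau c,\,sc)$, where $i\circ\tau=sd-ds$; this is possible since $sd-ds$ lands in $\ker q$. Then $\sigma$ is a chain map and $\rho\circ\sigma=\mathrm{id}_{\overline Q}$ for $\rho(x,y)=q(y)$. So if $T$ contracts $\mathrm{Cone}(i)$, then $\rho T\sigma$ contracts $\overline Q$. This is exactly where split injectivity enters.

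On the sign in the last step: with the convention $D\phi=d\phi-(-1)^{|\phi|}\phi d$, one gets $\bar B=(-1)^{|\bar A|}\bar A\circ S$.
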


The following proposition may be regarded as the $L_K[1]$-version of
Theorem \ref{th}.

\begin{prop}\cite[Proposition 4.4.11]{FOOO1}
For an $L_{K-1}[1]$-morphism $\mathfrak{F}^{(K-1)} : \mathfrak{C}_1 \rightarrow \mathfrak{C}_2$ over $f$ that is compatible with $\mathrm{Eval}_j, \ j =0,1$ and $\mathrm{Incl},$ there exists an $L_{K}[1]$-morphism $\mathfrak{F}^{(K)} : \mathfrak{C}_1 \rightarrow \mathfrak{C}_2$ (over $f$) that extends $\mathfrak{F}^{(K-1)}$ and is compatible with $\mathrm{Eval}_j, \ j =0,1$ and $\mathrm{Incl}.$
\end{prop}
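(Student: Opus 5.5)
We will prove the proposition by induction on $K$, following the scheme of \cite[Proposition 4.4.11]{FOOO1}. We fix an $L_{K-1}[1]$-morphism $\mathfrak{F}^{(K-1)}$ over $f$ compatible with $\Eval_j$ and $\Incl$. The first step is to choose an arbitrary $\mathfrak{F}'_K \in \mathrm{Hom}(S^K\mathfrak{C}_1,\mathfrak{C}_2)$ so that the relation (\ref{frel}) holds in arity $K$. This is possible because the obstruction class $[O_K(\mathfrak{F}^{(K-1)})]$ vanishes. To see this, apply Lemma \ref{fasgg}(v) to $\Eval_0\circ\mathfrak{F}^{(K-1)} = f^{(K-1)}\circ\Eval_0$. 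This gives $(\Eval_0)_{1*}[O_K(\mathfrak{F}^{(K-1)})] = [O_K(f^{(K-1)})]\circ (S^{\le K}(\Eval_0)_1)_*$. The right-hand side is zero, since $f$ is an honest $L_\infty[1]$-morphism. Because $(\Eval_0)_1$ is a quasi-isomorphism, the class itself vanishes.

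The second step corrects $\mathfrak{F}'_K$ so that compatibility holds. We form the error term
\[
\mathrm{Err}_K := \big((\Eval_0)_1\circ\mathfrak{F}'_K - f_K\circ(\Eval_0)_1^{\otimes K},\ (\Eval_1)_1\circ\mathfrak{F}'_K - f_K\circ(\Eval_1)_1^{\otimes K}\big).
\]
It lies in $\mathrm{Hom}(S^K\mathfrak{C}_1, C_2\oplus C_2)$. We then check three things:
\begin{itemize}
\item $\delta_1\mathrm{Err}_K=0$. This follows because both $\mathfrak{F}'_K$ and $f_K$ solve the same arity-$K$ equation, and $\Eval_j$ is strict and commutes with the lower $\mathfrak{F}_k$.
\item $\mathrm{Err}_K \circ \Incl_1^{\otimes K}$ is $\delta_1$-exact after we first adjust $\mathfrak{F}'_K$ on $S^K(\Incl_1 C_1)$ to be $\Incl_1\circ f_K$. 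This adjustment is legitimate: $\Incl\circ f$ is already an $L_\infty[1]$-morphism extending $\mathfrak{F}^{(K-1)}\circ\Incl$, and the difference of two solutions is a $\delta_1$-cocycle. The adjustment makes $\mathrm{Err}_K\circ\Incl_1^{\otimes K}=0$ and simultaneously secures compatibility with $\Incl$.
\item With this, we apply Lemma \ref{lem:fooo-lemma}. We take $\overline D_1 = S^K C_1$ and $\overline D_2 = S^K\mathfrak{C}_1$, with $i = S^K\Incl_1$; this map is split injective over a field and a quasi-isomorphism, by the symmetric-power argument already used above. We take $\overline D_3 = C_2\oplus C_2$. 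The lemma yields $B$ with $\delta_1 B = \mathrm{Err}_K$ and $B\circ i=0$.
\end{itemize}

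The third step lifts $B$ to $\mathfrak{C}_2$. By surjectivity of $(\Eval_0)_1\oplus(\Eval_1)_1$ (axiom (iv) of Definition \ref{mdl1}), we lift $B$ to some $\widetilde B\in\mathrm{Hom}(S^K\mathfrak{C}_1,\mathfrak{C}_2)$. We may take $\widetilde B\circ i=0$ by choosing the lift to vanish on the summand $S^K(\Incl_1 C_1)$. The difference between $\delta_1 \widetilde B$ and the needed correction then lies in $\ker((\Eval_0)_1\oplus(\Eval_1)_1)$, and we handle it with a second application of Lemma \ref{lem:fooo-lemma} with target this kernel. Setting $\mathfrak{F}_K := \mathfrak{F}'_K - \delta_1\widetilde B$, plus that kernel correction, preserves the arity-$K$ relation, since only $\delta_1$-exact terms are added. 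It also kills $\mathrm{Err}_K$ and keeps $\mathfrak{F}_K\circ\Incl_1^{\otimes K} = \Incl_1\circ f_K$. This gives $\mathfrak{F}^{(K)}$.

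The main obstacle is the sign and bookkeeping in the claim $\delta_1 \mathrm{Err}_K = 0$. A further subtlety is ensuring that the kernel correction does not disturb the $\Incl$-compatibility. I would handle the latter by working throughout in the subcomplex of maps vanishing on $S^K(\Incl_1C_1)$, in which Lemma \ref{lem:fooo-lemma} is exactly tailored to operate.
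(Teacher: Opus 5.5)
Your proposal is correct and follows essentially the same route as the paper (after FOOO). You produce an arity-$K$ solution $\mathfrak{F}'_K$ with $\mathfrak{F}'_K\circ\Incl_1^{\otimes K}=\Incl_1\circ f_K$, apply Lemma~\ref{lem:fooo-lemma} to the $\Eval$-error to get $B$ (the paper's $\mathrm{Cor1}_K$), lift it through the surjection $(\Eval_0)_1\oplus(\Eval_1)_1$ to $\widetilde B$ vanishing on $S^K(\Incl_1C_1)$ (the paper's $\mathrm{Cor2}_K$), and subtract $\delta_1\widetilde B$. That last step is in fact the right form of the paper's final line, where $\mathfrak{F}'_K-\mathrm{Cor2}_K$ should read $\mathfrak{F}'_K-\delta_1\mathrm{Cor2}_K$. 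Killing the obstruction by post-composition with the quasi-isomorphism $(\Eval_0)_1$, rather than restriction along $\Incl$, is an equally valid minor variant.

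Two details to tidy:
\begin{enumerate}
\item The extra ``kernel correction'' is unnecessary, since $(\Eval_j)_1\circ\delta_1\widetilde B=\delta_1\bigl((\Eval_j)_1\circ\widetilde B\bigr)=\delta_1B^{(j)}=\mathrm{Err}^{(j)}_K$ exactly.
\item The $\Incl$-adjustment should be made by subtracting a $\delta_1$-cocycle on all of $S^K\mathfrak{C}_1$ that restricts to the cocycle difference on $S^KC_1$. Such a cocycle exists because restriction along the split injective quasi-isomorphism $S^K\Incl_1$ is surjective with acyclic kernel, hence surjective on cocycles. Merely redefining values on the subspace $S^K(\Incl_1C_1)$ over an arbitrary complement can break the arity-$K$ relation.
\end{enumerate}
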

\begin{proof}
We denote
\[
\text{\rm Err}_K := \widehat{l} \circ \widehat{\mathfrak{F}}^{(K-1)} - \widehat{\mathfrak{F}}^{(K-1)} \circ \widehat{l}  \in \text{Hom}\left(S^K \mathfrak{C}_1, S^K\mathfrak{C}_2\right),
\]
where $\widehat{(\cdot)}$ denotes the coalgebra map determined by $(\cdot)$ (cf. Lemma \ref{aglem}).

We now list several properties of $\text{\rm Err1}_K$ as a lemma
\begin{lem}\label{lemivp}
We have:
\begin{enumerate}
\item[(i)] $\text{\rm Err}_K|_{S^{\leq K-1}\mathfrak{C}_1} \equiv 0,$  where the notation $S^{\leq K-1}$ is introduced in (\ref{skcn}).
\item[(ii)] $\mathrm{Im}(\text{\rm Err}_K) \subset S^1 \mathfrak{C}_2 = \mathfrak{C}_2.$
\item[(iii)]  $\text{\rm Err}_K \subset \mathrm{Im} \delta_1 \text{ in Hom}\left(S^K \mathfrak{C}_1, S^K\mathfrak{C}_2\right).$
\item[(iv)] There exists $\mathfrak{F}'_K$ such that $\mathfrak{F}'_K \circ \widehat{\Incl} = \Incl_1 \circ f_K.$
\end{enumerate}
\end{lem}
\begin{proof}
(i) and (ii) can be easily verified. For (iii), we consider
\begin{equation}\label{errincl}
\begin{split}
\text{\rm Err}_K \circ \widehat{\Incl} &= \left(l_1 \circ \widehat{\mathfrak{F}}^{(K-1)} - \mathfrak{F}^{(K-1)} \circ \widehat{l}\right) \circ \widehat{\Incl}\\
&= \Incl_1 \circ \left(l_1 \circ \widehat{f}^{(K-1)} - f^{(K-1)} \circ \widehat{l}\right) = - \delta_1\left(\Incl_1 \circ \widehat{f}_K\right).
\end{split}
\end{equation}
Since Incl is a quasi-isomorphism, we have $\text{\rm Err}_K \subset \mathrm{Im} \delta_1.$

For (iv), we consider $\mathfrak{F}_K''$ from (iii) such that $\delta_1 \left(\mathfrak{F}_K''\right) + \text{\rm Err}_K=0.$ Then (\ref{errincl}) implies that we have $\delta_1 \left(\mathfrak{F}_K'' \circ \widehat{\Incl} - \Incl_1 \circ f_K\right) = 0,$ so there exist
$\gamma \in \text{Hom}\left(S^K \mathfrak{C}_1, \mathfrak{C}_2\right)$ with $\delta_1 (\gamma) = 0$ and $\gamma' \in \text{Hom}\left(S^K C_1, \mathfrak{C}_2\right)$ and
\begin{equation}\nonumber
\mathfrak{F}_K'' \circ \widehat{\Incl} - \Incl_1 \circ f_K =  \delta_1({\gamma}
 \circ \widehat{\Incl} + \gamma') = {\gamma}
 \circ \widehat{\Incl} + \delta_1(\gamma').
\end{equation}
Then $\mathfrak{F}_K' := \mathfrak{F}_K'' - \gamma - \delta_1\left(\gamma' \circ \widehat{\Eval}\right)$ satisfies the desired condition.
\end{proof}

 Since $\mathfrak{F}^{(K-1)}$ is an $L_{K-1}[1]$-morphism,
$\mathfrak{F}^{(K-1)}$ together with $\mathfrak{F}_K'$
defines an $L_K[1]$-morphism by the property (iv)
of $\mathfrak{F}_K'$ in Lemma \ref{lemivp}. (See \cite[Lemma 4.4.18]{FOOO1} for the $A_K$-case.) Let $\mathfrak{F}^{(K)\prime}$ be the resulting
$L_{K}[1]$-morphism obtained thereby. $\mathfrak{F}^{(K)\prime}$, however, may not yet be compatible with
the evaluation map $\Eval_j$'s, and so we need to modify it.

We denote by
\begin{equation}\nonumber
\text{\rm Err}_K^{(j)} := \Eval_{j} \circ \mathfrak{F}^{(K)\prime} - f \circ \widehat{\Eval}_j, \ j =0,1
\end{equation}
the measure of the aforementioned incompatibilities.
Notice that we have
$$
\text{\rm Err}_K^{(j)}|_{S^{\leq K-1}\mathfrak{C}_1}
\equiv 0; \
\mathrm{Im}\left(\text{\rm Err}_K^{(j)}|_{S^{\leq K}\mathfrak{C}_1}\right)
\subset C_2, \ j =0,1
$$
hence we have
\begin{equation}\nonumber
\left(\Err_K^{(0)}, \Err_K^{(1)}\right) \in \text{Hom}\left(S^{\leq K} \mathfrak{C}_1, C_2 \oplus C_2\right).
\end{equation}
We can straightforwardly verify that $\delta_1 \left(\Err_K^{(0)}, \Err_K^{(1)}\right) = 0$
(by the fact that $\mathfrak{F}^{(K)\prime}$ and $f$ are
$L_{K}[1]$-morphisms) and that
$\text{\rm Err}_K^{(j)} \circ \widehat{\Incl} = 0$ (by the assumption of compatibility with $\Incl$).

In fact, Lemma \ref{lem:fooo-lemma} states that there exists 
\begin{equation}\nonumber
\left(\text{\rm Cor1}^{(0)}_K, \text{\rm Cor1}^{(1)}_K\right) \in \text{Hom}\left(S^{\leq K} \mathfrak{C}_1, C_2 \oplus C_2\right),
\end{equation}
satisfying
\beastar
\text{\rm Cor1}_K^{(j)} \circ \widehat{\Incl} & = & 0, \ j=0, 1,\\
\delta_1 \left(\text{\rm Cor1}^{(0)}_K, \text{\rm Cor1}^{(1)}_K\right)
& = & \left(\Err_K^{(0)}, \Err_K^{(1)}\right).
\eeastar
Then from (iii) Definition \ref{mdl1}, $\Eval_j, \ j =0,1,$ we have $\text{Cor2}_K \in \text{Hom}\left(S^K \mathfrak{C}_1, \mathfrak{C}_2\right)$
such that
\[
\left(\Eval_j\right)_1 \circ \text{Cor2}_K = \text{\rm Cor1}^{(j)}_{K}, \ j =0, 1.
\]
It immediately follows that
\[
\text{Cor2}_K \circ \widehat{\Incl} = 0.
\]
Now we can verify that $\mathfrak{F}_K : = \mathfrak{F}'_K - \text{\rm Cor2}_K$ is the $K$-th multilinear map of the desired $L_{K}[1]$-morphism. The proof of Theorem \ref{th} is now complete.
\end{proof}

\end{document}